\documentclass[11pt,fleqn]{article}

\usepackage{algorithm}
\usepackage{algpseudocode}

\usepackage{amsmath}
\usepackage{amsthm}
\usepackage{amsfonts}
\usepackage{amssymb}

\usepackage[dvipsnames]{xcolor}

\definecolor{tue_red}{HTML}{C81919}

\definecolor{tue_red}{HTML}{C81919}
\definecolor{tue_dark_blue}{HTML}{101073}
\definecolor{tue_blue}{HTML}{0066CC}
\definecolor{tue_cyan}{HTML}{00A2DE}
\definecolor{tue_green}{HTML}{84D200}
\definecolor{tue_yellow}{HTML}{CEDF00}

\definecolor{accent}{gray}{0.95}

\definecolor{color1}{RGB}{0, 121, 178}
\definecolor{color2}{RGB}{255, 124, 37}
\definecolor{color3}{RGB}{37, 160, 55}
\definecolor{color4}{RGB}{220, 32, 44}
\definecolor{color5}{RGB}{147, 104, 186}
\definecolor{color6}{RGB}{143, 85, 76}
\definecolor{color7}{RGB}{230, 119, 192}
\definecolor{color8}{RGB}{127, 127, 127}
\definecolor{color9}{RGB}{192, 188, 55}
\definecolor{color10}{RGB}{0, 191, 206}

\usepackage[margin = 1in, twoside]{geometry}

\usepackage{microtype}

\usepackage[UKenglish]{babel}
\usepackage[UKenglish]{isodate}

\PassOptionsToPackage{hyphens}{url}
\usepackage[hypertexnames = false, pdftex]{hyperref}
\hypersetup{colorlinks = true, linkcolor = blue, citecolor = blue, urlcolor = blue, linktocpage}

\usepackage[numbers, square, comma]{natbib}
\usepackage{doi}

\usepackage{enumitem}

\usepackage{graphicx}
\usepackage{subcaption}

\usepackage{comment}

\usepackage{booktabs, tabu}

\usepackage{multicol}

\usepackage{bbm}

\usepackage{float}

\usepackage[T1]{fontenc}
\usepackage{bm}
\usepackage{mathrsfs}

\newcommand{\Vn}{\ensuremath{{\fV_n}}}

\newcommand{\Wm}{\ensuremath{{\fW_m}}}

\newcommand{\fTheta}{\ensuremath{\bm{\Theta}}}
\newcommand{\fGamma}{\ensuremath{\bm{\Gamma}}}

\newcommand{\Hn}{\ensuremath{{\fH_n}}}

\newcommand{\eps}{\ensuremath{\varepsilon}}

\newcommand{\id}{\ensuremath{\textrm{Id}}}

\usepackage{mathtools}
\usepackage{suffix}
\usepackage{stmaryrd} %

\usepackage{tensor}

\DeclarePairedDelimiter{\prt}{(}{)}

\DeclarePairedDelimiter{\norm}{\lVert}{\rVert}
\DeclarePairedDelimiter{\inner}{\langle}{\rangle}
\DeclarePairedDelimiter{\set}{\{}{\}}

\newcommand{\Norm}[1]{\left\lVert#1\right\rVert}

\let \oldforall \forall
\let \forall \undefined
\DeclareMathOperator{\forall}{\oldforall}

\let \oldexists \exists
\let \exists \undefined
\DeclareMathOperator{\exists}{\oldexists}

\let \oldtext \text
\renewcommand{\text}[1]{~\oldtext{#1}~}

\newcommand{\cond}{\, : \,}
\newcommand{\st}{\text{s.t.}}

\DeclareMathOperator{\Lin}{Lin}

\DeclareMathOperator*{\argmin}{arg \, min}

\usepackage{ifthen}
\newlength{\leftstackrelawd}
\newlength{\leftstackrelbwd}
\def \leftstackrel#1#2{\settowidth{\leftstackrelawd}%
  {${{}^{#1}}$} \settowidth{\leftstackrelbwd}{$#2$}%
  \addtolength{\leftstackrelawd}{- \leftstackrelbwd}%
  \leavevmode \ifthenelse{\lengthtest{\leftstackrelawd>0pt}}%
  {\kern-.5 \leftstackrelawd}{} \mathrel{\mathop{#2} \limits^{#1}}}

\DeclareMathOperator{\grad}{grad}

\DeclareMathOperator{\vspan}{span}

\DeclareMathOperator{\dom}{dom}

\newcommand{\proj}[1]{\ensuremath{\rP_{#1}}}

\newcommand{\tangent}[2]{\ensuremath{\rT_{#1}{#2}}}

\usepackage{cleveref}

\newbool{isrelease}

\newcounter{review}

\makeatletter

\newcommand \listreviewname{List of Reviews}
\newcommand \listofreviews{\section*{\listreviewname} \addcontentsline{toc}{section}{List of Reviews} \@starttoc{tor}}
\makeatother

\newcommand{\bA}{\ensuremath{\mathbb{A}}}

\newcommand{\bC}{\ensuremath{\mathbb{C}}}

\newcommand{\bG}{\ensuremath{\mathbb{G}}}

\newcommand{\bI}{\ensuremath{\mathbb{I}}}
\newcommand{\bJ}{\ensuremath{\mathbb{J}}}

\newcommand{\bM}{\ensuremath{\mathbb{M}}}

\newcommand{\bR}{\ensuremath{\mathbb{R}}}
\newcommand{\bS}{\ensuremath{\mathbb{S}}}

\newcommand{\bU}{\ensuremath{\mathbb{U}}}

\newcommand{\bW}{\ensuremath{\mathbb{W}}}

\newcommand{\bY}{\ensuremath{\mathbb{Y}}}
\newcommand{\bZ}{\ensuremath{\mathbb{Z}}}

\newcommand{\cA}{\ensuremath{\mathcal{A}}}
\newcommand{\cB}{\ensuremath{\mathcal{B}}}
\newcommand{\cC}{\ensuremath{\mathcal{C}}}

\newcommand{\cH}{\ensuremath{\mathcal{H}}}

\newcommand{\cJ}{\ensuremath{\mathcal{J}}}

\newcommand{\cL}{\ensuremath{\mathcal{L}}}
\newcommand{\cM}{\ensuremath{\mathcal{M}}}

\newcommand{\cO}{\ensuremath{\mathcal{O}}}
\newcommand{\cP}{\ensuremath{\mathcal{P}}}

\newcommand{\cU}{\ensuremath{\mathcal{U}}}
\newcommand{\cV}{\ensuremath{\mathcal{V}}}

\newcommand{\rP}{\ensuremath{\mathrm{P}}}

\newcommand{\rT}{\ensuremath{\mathrm{T}}}

\newcommand{\rw}{\ensuremath{\mathrm{w}}}

\newcommand{\rz}{\ensuremath{\mathrm{z}}}

\newcommand{\fD}{\ensuremath{\bm{\mathrm{D}}}}

\newcommand{\fG}{\ensuremath{\bm{\mathrm{G}}}}
\newcommand{\fH}{\ensuremath{\bm{\mathrm{H}}}}

\newcommand{\fL}{\ensuremath{\bm{\mathrm{L}}}}

\newcommand{\fT}{\ensuremath{\bm{\mathrm{T}}}}

\newcommand{\fV}{\ensuremath{\bm{\mathrm{V}}}}
\newcommand{\fW}{\ensuremath{\bm{\mathrm{W}}}}

\renewcommand{\d}{\mathrm{d}}

\newcommand{\dt}{\d t}

\DeclareFontFamily{U}{matha}{\hyphenchar\font45}
\DeclareFontShape{U}{matha}{m}{n}{
      <5> <6> <7> <8> <9> <10> gen * matha
      <10.95> matha10 <12> <14.4> <17.28> <20.74> <24.88> matha12
      }{}
\DeclareSymbolFont{matha}{U}{matha}{m}{n}

\DeclareMathSymbol{\Lt}{3}{matha}{"CE}
\DeclareMathSymbol{\Gt}{3}{matha}{"CF}

\usepackage{tikz}

\usetikzlibrary{backgrounds}

\usepackage{pgfplots}
\pgfplotsset{compat = 1.18}

\usepgfplotslibrary{groupplots}

\usepackage{pgfplotstable}

\newcommand{\Jcal}{\ensuremath{\mathcal{J}}}
\newcommand{\Mcal}{\ensuremath{\mathcal{M}}}

\newcommand{\Ucal}{\ensuremath{\mathcal{U}}}

\newcommand{\Hcal}{\mathcal{H}}

\newcommand{\Rbb}{\ensuremath{\mathbb{R}}}
\newcommand{\Nbb}{\ensuremath{\mathbb{N}}}

\newcommand{\Sbb}{\ensuremath{\mathbb{S}}}

\newcommand{\Cbb}{\ensuremath{\mathbb{C}}}
\newcommand{\Gbb}{\ensuremath{\mathbb{G}}}

\newcommand{\inprod}[3]{\left<{#1},{#2}\right>_{#3}}
\newcommand{\inprodV}[2]{\inprod{#1}{#2}{\Hs}}

\newcommand{\nr}{\ensuremath{n}} 
\newcommand{\nm}{\ensuremath{m}} 

\newcommand{\Vtwon}{\ensuremath{\mathbf{H}_{2\nr}}}
\newcommand{\Wtwom}{\ensuremath{\mathbf{W}_{2\nm}}}
\newcommand{\Hs}{{\fH}} 
\newcommand{\Hhat}{\widehat{\fH}}

\newtheorem{theorem}{Theorem}

\newtheorem{remark}{Remark}

\newtheorem{lemma}{Lemma}

\title{Symplectic Filtering of Hamiltonian Dynamics\\ with Moving Sensors}
\author{Olga Mula, Cecilia Pagliantini\thanks{C.P. acknowledges the MIUR Excellence Department Project awarded to the Department of Mathematics, University of Pisa, CUP I57G22000700001 and the INdAM-GNCS.}, Federico Vismara\thanks{F.V. acknowledges funding by the European Union (ERC, COCOA, 101170147).}}
\date{}

\begin{document}

\maketitle

\begin{abstract}
  While structure-preserving methods for the forward simulation of Hamiltonian systems are well established, the mathematical foundations of their data assimilation and filtering counterparts remain comparatively less developed, particularly regarding rigorous accuracy guarantees. To address this gap, we propose an online filtering algorithm to reconstruct an unknown function $u^\dagger$ from finitely many measurements, assuming that $u^\dagger$ solves a parametric Hamiltonian PDE with unknown inputs. The method uses low-dimensional symplectic approximation spaces that evolve in time according to the PDE model and informed by the measurement data. The method can also be coupled with a dynamical sensor-placement strategy designed to optimize reconstruction stability. We derive error bounds in terms of stability and best approximation, and show that the reconstruction remains in a symplectic space and preserves energy up to the approximation error. Numerical experiments demonstrate the effectiveness of the method and the benefits of dynamic over static sensor placement.
\end{abstract}

\section{Introduction}
\label{sec:intro}

Hamiltonian dynamics provide a fundamental framework for modeling conservative systems arising in areas such as celestial mechanics, molecular dynamics, and plasma physics. Their evolution is governed by an energy functional and a symplectic structure, which gives rise to characteristic geometric properties such as conservation of invariants and phase-space volume preservation. These features are essential for accurately capturing the long-term behavior of the system, but they pose significant challenges for numerical approximation. While structure-preserving methods for forward simulation have been extensively studied, filtering and data assimilation for Hamiltonian systems remain largely unexplored. Standard filtering and estimation techniques may be prohibitively expensive, as they typically require repeated forward solves, and may fail to preserve the underlying geometry, leading to physically inconsistent or unstable reconstructions. The main goal of this paper is therefore to develop the mathematical foundations of a computationally efficient filtering strategy that preserves the intrinsic geometric structure of Hamiltonian dynamics.

The introduction is organized as follows. In \Cref{sec:filtering}, we define the mathematical framework for the filtering problem under consideration. In \Cref{sec:idealized-strategy}, we give an overview of the reconstruction paradigm developed in the paper. Sections \ref{sec:novelty-and-plan} and \ref{sec:earlier-works} explain the main contributions of the strategy in connection with earlier works, and outline the plan of the rest of the paper.

\subsection{The filtering problem}
\label{sec:filtering}

Our goal is to reconstruct a time-dependent function $u^\dagger(t)$ from partial observations. For every $t\in \fT:=\bR_+$, $u^\dagger(t)$ belongs to a Hilbert space $\fH$ defined over a spatial domain $\fD\subset \bR^d$, and with norm $\norm{\cdot}_{\fH}$ induced by the inner product $\inner{\cdot,\cdot}_{\fH}$. We assume in the following that the evolution is smooth enough so that $u^\dagger \in \cC^1(\fT; \fH)$. At every time $t\in \fT$, $u^\dagger(t)$ is only partially known through $m$ observations of the form
\begin{equation*}
  \rz_i(t) = \ell_i(u^\dagger(t)), \qquad i = 1,\dots, m,
\end{equation*}
where $\ell_i: \fH \rightarrow \bR$ are known, independent, continuous linear functionals from $\fH'\coloneqq \Lin(\fH;\bR)$, the dual space of $\fH$. The functionals $\{\ell_i\}_{i=1}^m$ often represent the response of a physical measurement device, but they can have a different interpretation depending on the application. In the following, we gather the observations in a vector,
\begin{equation}
  \label{eq:obs}
  \rz(t) = \ell(u^\dagger(t)) \in \bR^m,
\end{equation}
where $\ell=(\ell_1,\dots,\ell_m)\in \Lin(\fH; \bR^m)$. In fact, at every time $t\in \fT$, we have access to the full observation history until that moment, which we denote as
$$
  \underline{\rz}(t) = \{ \rz(s) \cond s \in [0,t]\}.
$$
Since the functionals $\ell_i$ are linearly independent, the knowledge of $\underline{\rz}(t)$ is equivalent to knowing
$$
  \underline{\rw}(t) = \{ \rw(s) \cond s \in [0,t]\},
$$
where, for every $t\in\fT$,
$$
  \rw(t)\coloneqq \proj{\Wm}(u^\dagger(t))
$$
is the orthogonal projection of $u^\dagger(t)$ onto the observation space
$$
  \Wm\coloneqq \vspan\set{\rw_i}_{i=1}^m,
$$
and $\rw_i\in \fH$ are the Riesz representers of the linear functionals $\ell_i$, defined as
$$
  \ell_i(v)=\inner{\rw_i,v}_{\fH},\quad \forall v\in \fH.
$$

Recovering $u^\dagger(t)\in \fH$ from $\underline{\rw}(t)\in \Wm$ is an ill-posed problem whenever the dimension of $\fH$ exceeds $m$. Indeed, in that case, for any observation $\rw(t)\in \Wm$, there are infinitely many states $u(t)\in \fH$ such that $\proj{\Wm}(u(t))=\rw(t)$.
Thus, to recover the evolution $u^\dagger$ up to a guaranteed accuracy one has to combine the observations with some a priori information. In our case, we assume that $u^\dagger$ is the solution to an evolution problem of the form
\begin{equation}\label{eq:PDE}
  \begin{cases}
    \partial_t u = \cB_\theta(u)       & \quad (t,x)\in \fT\times \fD, \\
    u(\theta)(t=0, x) = u_0(\theta)(x) & \quad x\in\fD,
  \end{cases}
\end{equation}
where $\cB_\theta$ is a partial differential operator with appropriate boundary conditions. In our developments, $\cB_\theta$ will be a Hamiltonian operator, which we rigorously define later on in \Cref{sec:hamitonian-flows}. The operator depends on a parameter $\theta$ from a compact space ${\fTheta}$, which can be finite or infinite dimensional. We assume that, for each $\theta \in {\fTheta}$, problem \eqref{eq:PDE} is well-posed in the sense that there exists a unique solution $u(\theta)\in \cC^1(\fT; \fH)$.

In the context of the filtering problem, we then assume that $u^\dagger(t)=u(\theta^\dagger)(t, \cdot)$ for an unknown parameter $\theta^\dagger\in {\fTheta}$. Since $\theta^\dagger$ is unknown, the dynamics of $u^\dagger$ cannot be recovered by setting $\theta=\theta^\dagger$ in problem \eqref{eq:PDE}, and solving the PDE with classical discretization methods. Thus, our prior knowledge on $u^\dagger$ is only that it belongs to the set of trajectories
\begin{equation}
  \label{eq:set-param-sols}
  \cU \coloneqq \{ u(\theta) \in \cC^1(\fT; \fH) \text{ solution to \eqref{eq:PDE}} \cond \theta \in {\fTheta}  \} \; \subset \cC^1(\fT; \fH).
\end{equation}
To study details about the dynamics, it will be useful to slice the set $\cU$ in time as
\begin{equation*}
  \cU = \bigcup_{t\in\fT}\; \cU(t),
\end{equation*}
with
\begin{equation}\label{eq:solution-set-t}
  \cU(t) \coloneqq \{ u(\theta)(t,\cdot)\in \fH \cond \theta \in {\fTheta} \}\;\subset\fH.
\end{equation}

To summarize, here is the task and our assumptions:
\paragraph{Task and assumptions.} Recover the dynamics of $u^\dagger$ assuming that:
\begin{enumerate}
  \item[(A1)] $u^\dagger \in \cU \subset \cC^1(\fT; \fH)$, which is equivalent to the existence of $\theta^\dagger\in {\fTheta}$ such that $u^\dagger(t) = u(\theta^\dagger)(t,\cdot)\in\fH$ for all $t\in \bR_+$;
  \item[(A2)] $\theta^\dagger$ is unknown, but, for every $t\in \bR_+$, we are given the evolution of the observations until time $t$, which is given by $\underline{\rz}(t)$ and $\underline{\rw}(t)$;
  \item[(A3)]
    for a given measure $\mu$ in the set $\cP_2({\fTheta})$ of probabilities with finite second moments, solutions to \eqref{eq:PDE} satisfy, for all $t>0$,
    \begin{equation*}
      u(\cdot)(t,\cdot)\in \fL\coloneqq L^2(({\fTheta},\mu); \fH)=\left\{f:\fTheta\rightarrow\fH\,:\,
      \norm{f}^2_{\fL} \coloneqq \int_{{\fTheta}} \norm{f(\theta)}^2_{\fH}\, \mu(\d\theta) <+\infty\right\}.
    \end{equation*}
\end{enumerate}

\begin{remark}
  In practice, observations are available only at discrete times. We nevertheless adopt a continuous-time formulation to exploit functional-analytic and differential-geometric connections between states and velocities. This yields a system of differential equations (\Cref{thm:dynamics-gamma} in \Cref{sec:filtering-scheme}), whose numerical integration naturally gives a time-discrete formulation (\Cref{app:discrete} in the Appendix) and allows us to assess the effect of the observation time step.
\end{remark}

\subsection{Main strategy for the recovery}
\label{sec:idealized-strategy}
Since, by (A1), $u^\dagger \in \cU$ and it is differentiable in time, its velocity satisfies
\begin{align*}
  \dot{u}^\dagger \in
  \dot{\cU}
   & \coloneqq \{ \dot{u}(\theta) \in \cC(\fT; \fH) \cond \theta \in {\fTheta}  \}
  \; \subset \cC(\fT; \fH),
\end{align*}
and we slice $\dot \cU$ just as we did with $\cU$, that is
\begin{equation*}
  \dot{\cU} = \bigcup_{t\in\fT}\; \dot{\cU}(t),
\end{equation*}
with
\begin{equation*}
  \dot{\cU}(t)
  \coloneqq \{ \dot{u}(\theta)(t,\cdot)\in \fH \cond \theta \in {\fTheta} \}
  = \{ \cB_\theta(u(\theta)(t,\cdot)) \in \fH \cond \theta \in {\fTheta}  \} \subset \fH.
\end{equation*}
In addition to this, from the state observations \eqref{eq:obs} we can also deduce velocity observations thanks to the continuity of $\ell$,
\begin{equation*}
  \dot{\rz}(t) =  \lim_{\delta t \to 0} \frac{\ell(u^\dagger(t+\delta t)) -\ell(u^\dagger(t))  }{\delta t} = \ell(\dot{u}^\dagger(t)) \in \bR^m,
\end{equation*}
together with the corresponding orthogonal projection
$$
  \dot{\rw}(t)=\proj{\Wm}(\dot{u}^\dagger(t)),\quad \forall t\in \bR_+.
$$
In fact, since for a given time $t>0$ we only have observations in the interval $[0, t]$, strictly speaking we only have access to the time-derivative from the left at that time,
\begin{equation*}
  \dot{\rz}_{-}(t) =  \lim_{\delta t \to 0, \delta t<0} \frac{\ell(u^\dagger(t+\delta t)) -\ell(u^\dagger(t))  }{\delta t} = \ell(\dot{u}_{-}^\dagger(t)) \in \bR^m.
\end{equation*}
To simplify the exposition, we work with $\dot{\rz}(t)$ at the expense of having a small inconsistency, because this detail will only be relevant at the level of numerically integrating the evolution equations that we will obtain.

Our strategy for reconstruction is as follows. For the initial time $t=0$, we want to identify a good approximation space $\Hn(0) \subset \fH$ to build a recovery algorithm
$$
  A(0): \Wm \to \Hn(0)
$$
such that $A(0)(\rw(0))$ approximates $u^\dagger(0)$ well. We say that $A(0)$ is near-optimal with respect to the recovery of all possible states in $\cU(0)$
if there exists $k(0)>0$ such that
$$
  E_{wc}(A(0), \cU(0), \Hn(0)) \leq k(0) \delta_{wc}(\cU(0), \Hn(0)),
$$
where
\begin{align*}
  E_{wc}(A(0), \cU(0), \Hn(0))
   & \coloneqq \max_{\theta\in {\fTheta}} \norm{u(\theta)(0,\cdot)-A(0)(\rw(0))}_{\fH},       \\
  \delta_{wc}(\cU(0), \Hn(0))
   & \coloneqq \max_{\theta\in {\fTheta}}\min_{v\in \Hn(0)}\norm{u(\theta)(0,\cdot)-v}_{\fH},
\end{align*}
are, respectively, the recovery error of the elements of $\cU(0)$ with algorithm $A(0)$ in the worst case sense, and the error of best approximation of elements of $\cU(0)$ with the space $\Hn(0)$. The same concepts can be defined in the average sense with respect to the probability distribution $\mu\in\cP({\fTheta})$ from our assumption (A3),
\begin{align*}
  E_{\mu}(A(0), \cU(0), \Hn(0))
   & \coloneqq \int_{{\fTheta}} \norm{u(\theta)(0,\cdot)-A(0)(\rw(0))}_{\fH} \,\mu(\d \theta),        \\
  \delta_{\mu}(\cU(0), \Hn(0))
   & \coloneqq \int_{{\fTheta}} \min_{v\in \Hn(0)}\norm{u(\theta)(0,\cdot)-v}_{\fH} \,\mu(\d \theta).
\end{align*}
They give rise to the concept of near-optimality in the average sense,
$$
  E_{\mu}(A(0), \cU(0), \Hn(0)) \leq k(0) \delta_{\mu}(\cU(0), \Hn(0)).
$$

For subsequent times $t>0$, our goal is to approximate well the velocity $\dot{u}^\dagger (t)$ from velocity observations $\underline{\dot{\rw}}(t)$, and then integrate in time. The motivation for working at the level of velocities and not the state itself is twofold: first, this will allow us to build a reconstructed state which is $\cC^1$ in time, thus coherent with the regularity of the exact solution $u^\dagger$. The second reason is connected to the preservation of certain properties of the flow, which usually comes expressed in the form of trajectories belonging to manifolds with well-defined tangent spaces where velocities need to be defined.
If we follow this rationale, we are after time-dependent approximation spaces $\Vn(t) \subset \fH$ to define an algorithm
$$
  \dot{A}(t): \Wm \to \Vn(t)
$$
such that $\dot{A}(t)(\dot{\rw}(t))$ approximates $\dot{u}^\dagger(t)$ well.
Similarly as before, we can define the concept of near-optimality for the class $\dot{\cU}(t)$ by defining $E_{\star}(\dot{A}(t),\dot{\cU}(t), \Vn(t))$ and $\delta_{\star}(\dot{\cU}(t), \Vn(t))$ for $\star=\{wc,\mu\}$.

We finally recover $A(t)(\underline{\rw}(t)) \in \fH_n(t)$ from $\dot{A}(t)(\dot{\rw}(t)) \in \fV_n(t)$ via suitable time integration. For this, we are assuming that the spaces $\fH_n(t)$ for the state and $\fV_n(t)$ for the velocity are coherent with each other with respect to time integration.
Note that, at any time $t$, the reconstruction relies on $\underline{\rw}(t)$, that is, on the observations in the interval $[0,t]$.

We can directly obtain an estimate on the recovery error
\begin{equation}\label{eq:rec-err}
  e(t) \coloneqq \norm{u^\dagger(t)-A(t)(\underline{\rw}(t))}_{\fH}\qquad\forall\, t>0
\end{equation}
that we provide in the next Lemma.

\begin{lemma}
  \label{lem:recovery-Vn-given}
  Assume that, for every $t\in\fT$, there exists a constant $\delta k(t)>0$ such that
  \begin{equation}\label{eq:near-opt}
    E_{\star}(\dot{A}(t), \dot{\cU}(t), \Vn(t)) \leq \delta k(t) \delta_{\star}(\dot{\cU}(t), \Vn(t)),\qquad \star=\{wc,\mu\}.
  \end{equation}
  Then, the recovery errors $e(t)$ in \eqref{eq:rec-err} and $E_{\star}(A(t), \cU(t), \Hn(t))$ can be bounded as
  \begin{align*}
    e(t)                            & \leq e(0) + \int_0^t \delta k(s) \min_{v\in \fV_n(s)}\norm{\dot{u}^\dagger(s)-v}_{\fH} \d s, \\
    E_{\star}(A(t), \cU(t), \Hn(t)) & \leq E_{\star}(A(0), \cU(0), \Hn(0))
    + \int_0^t \delta k(s) \delta_{\star}(\dot{\cU}(s),\Vn(s)) \d s, \quad \star \in \{wc, \mu\}.
  \end{align*}
\end{lemma}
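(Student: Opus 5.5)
The plan is to use the fundamental theorem of calculus on the error, assuming (as stated before the lemma) that the reconstruction is obtained by time integration of the velocity reconstruction:
\[
A(t)(\underline{\rw}(t)) = A(0)(\rw(0)) + \int_0^t \dot{A}(s)(\dot{\rw}(s))\,\d s .
\]
Since $u^\dagger\in\cC^1(\fT;\fH)$, we also have $u^\dagger(t)=u^\dagger(0)+\int_0^t \dot u^\dagger(s)\,\d s$ as a Bochner integral in $\fH$. Subtracting the two identities gives
\[
u^\dagger(t)-A(t)(\underline{\rw}(t)) = \bigl(u^\dagger(0)-A(0)(\rw(0))\bigr) + \int_0^t \bigl(\dot u^\dagger(s)-\dot A(s)(\dot\rw(s))\bigr)\,\d s .
\]
We then take norms and apply the triangle inequality together with $\norm{\int f}_{\fH}\le\int\norm{f}_{\fH}$. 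This yields
\[
e(t)\le e(0)+\int_0^t \norm{\dot u^\dagger(s)-\dot A(s)(\dot\rw(s))}_{\fH}\,\d s .
\]

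For the first bound, we need a pointwise (instance-wise) version of \eqref{eq:near-opt}, namely
\[
\norm{\dot u^\dagger(s)-\dot A(s)(\dot\rw(s))}_{\fH}\le \delta k(s)\min_{v\in\Vn(s)}\norm{\dot u^\dagger(s)-v}_{\fH}.
\]
This is the main obstacle, because \eqref{eq:near-opt} is stated only for the class-level quantities $E_\star$ and $\delta_\star$. I would interpret the hypothesis as the usual instance-wise near-optimality that underlies such class estimates. For the linear PBDW-type algorithms used later, this reads $\norm{v-\dot A(v)}\le \beta^{-1}\operatorname{dist}(v,\Vn)$ with $\delta k=\beta^{-1}$. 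I would state explicitly that this is how the hypothesis is used. Inserting it into the integral gives the first inequality.

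For the class estimate with $\star=wc$, I would run the same argument for each $\theta\in\fTheta$ with $u(\theta)$ in place of $u^\dagger$. Each integrand is bounded by $E_{wc}(\dot A(s),\dot\cU(s),\Vn(s))\le \delta k(s)\,\delta_{wc}(\dot\cU(s),\Vn(s))$, which is independent of $\theta$. Taking the maximum over $\theta$ on the left then gives the claim, using that the max of a sum is at most the sum of the maxima and that the max of an integral is at most the integral of the max.

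For $\star=\mu$, we integrate the per-$\theta$ inequality against $\mu$ and exchange the $\theta$- and $s$-integrals by Tonelli, since the integrands are nonnegative. Measurability follows from (A3) and continuity in $s$. The inner integral is exactly $E_\mu(\dot A(s),\dot\cU(s),\Vn(s))$, which is bounded by $\delta k(s)\,\delta_\mu(\dot\cU(s),\Vn(s))$ by \eqref{eq:near-opt}. This concludes the argument.
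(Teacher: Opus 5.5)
Your proposal is correct and follows the route the paper intends; the paper's proof only says the result follows by standard time integration, and you fill this in with the fundamental theorem of calculus for the coherently integrated reconstruction, the triangle inequality for the Bochner integral, and near-optimality, using a per-$\theta$ maximum or Tonelli for $E_{wc}$ and $E_\mu$. Your caveat about the first inequality is justified: the class-level hypothesis \eqref{eq:near-opt} cannot control the error for an individual $\dot u^\dagger(s)$ (for example one lying in $\Vn(s)$, where the first bound forces the integrand to vanish), so that bound genuinely needs the instance-wise near-optimality you state, which the paper leaves implicit.
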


\begin{proof}
  The result follows by standard time integration and differentiation.
\end{proof}

\subsection{Contributions and outline of the paper}
\label{sec:novelty-and-plan}

The strategy outlined above provides only a conceptual roadmap, since it assumes that the approximation space $\Vn(t)$ is known for every $t\in\fT$. In practice, this space must be constructed and evolved dynamically as new measurements become available. The main contribution of this paper is to develop a principled filtering strategy that jointly updates the approximation space $\Vn(t)$ and incorporates the available data, while preserving the geometric structure of the underlying dynamics. Since our focus is on Hamiltonian systems, we require the reconstructed trajectory to define a symplectic flow. This is achieved by imposing suitable geometric constraints on $\Vn(t)$. We complement our construction with a rigorous stability and error analysis of the resulting reconstruction. Because $\Vn(t)$ must itself be inferred during the evolution, the resulting recovery bounds are necessarily weaker than the idealized estimate from \Cref{lem:recovery-Vn-given}.

A central quantity in the analysis is the near-optimality constant $\delta k(t)$ in \eqref{eq:near-opt}. We will see later on that it  can be interpreted as a stability constant governed by the angle between the approximation space $\fV_n(t)$ and the observation space $\Wm$; see \cite{BCDDPW17, Mula2023}. Stable reconstruction therefore requires these spaces to remain sufficiently well aligned. Since Hamiltonian dynamics often involve transport effects, sharp gradients, and/or localized structures, a fixed observation space may fail to capture the main features of the dynamics, and this  translates into a loss of stability. We consequently allow the observation space to evolve as $\Wm(t)$, corresponding to sensors that move in time, following the optimal-control strategy introduced in \cite{MPV25}.

The outline of the paper is as follows. We close this introductory part with \Cref{sec:earlier-works} where we outline the main connections and novelties with respect to earlier works. We then move on to \Cref{sec:hamiltonian-dyn} where we recall relevant concepts and properties of Hamiltonian systems and symplectic flows. \Cref{sec:rec-strategy} introduces the filtering scheme and \Cref{sec:reformulation-decoders} provides a reformulation of the scheme in terms of decoder approximations. This provides a more abstract and general way of understanding the scheme, and it allows us to derive relevant theoretical properties. \Cref{sec:err} gives reconstruction error bounds and provides a stability analysis of the method. \Cref{sec:moving-sensors} explains the strategy to dynamically evolve the sensors' locations, and \Cref{sec:num-exp} gives numerical tests to illustrate the behavior and performance of the method. \Cref{sec:conclusion} summarizes the main conclusions and outlines possible future works. The text comes with two appendices: \Cref{app:proof-gamma-dyn} gives the proof of \Cref{thm:dynamics-gamma-2}, and \Cref{app:discrete} discusses practical aspects about discretization.

\subsection{Connections and novelties with respect to earlier works}
\label{sec:earlier-works}

Our approach lies at the intersection of dynamical approximation, data assimilation and filtering, and structure-preserving methods. We briefly review related work and position our contribution.

\begin{itemize}

  \item \textbf{Dynamical approximation.} Dynamically adapted approximation spaces have received increasing attention in forward PDE solvers, parametric model reduction, and uncertainty quantification, especially for moving or localized structures; see, e.g., \cite{MNV2020, P21, HPR22, SSBP2024, NT2024, BPV2024, FLLN2026, BCM2025, GMPR2026}. Their use in state estimation and filtering is much less explored and, to our knowledge, has focused mainly on linear Kalman filtering in a Bayesian setting; see \cite{Lombardi2022, SHNT2023, DY22,Vidlickova2022, NRT2026, KMNZ2026}.

  \item \textbf{Data assimilation, filtering, and Hamiltonian dynamics.} Bayesian data assimilation and filtering have been widely applied to Hamiltonian systems, particularly in weather applications; see, e.g., \cite{HH1998, WLNR2005, HRKR2021}. In contrast, our deterministic approach provides rigorous reconstruction error bounds and explicitly quantifies the role of the nature and placement of the measurements.

        Our framework is closely related to Parameterized Background Data-Weak (PBDW) methods, which combine reduced-order approximation and inverse-problem techniques. Introduced in \cite{MPPY15} and further developed in \cite{BCDDPW17, CDDFMN2020, CDMN2022, CDMS2022}, these methods mostly address stationary or smoothing problems with fixed approximation and observation spaces. Allowing both spaces to evolve extends this framework to filtering problems with transport, moving discontinuities, and localized structures.

        A time-dependent PBDW framework with evolving approximation and observation spaces was introduced in \cite{MPV25}. There, however, $\Vn(t)$ evolves solely according to the parametric forward dynamics, without feedback from incoming measurements. Here, we close this loop by dynamically adapting both $\Vn(t)$ and $\Wm(t)$ using the filtering information, with their evolution derived from variational principles. This coupled construction introduces new challenges in establishing stable reconstruction bounds while preserving the geometric structure of the Hamiltonian dynamics.
\end{itemize}

\section{Hamiltonian dynamics in Hilbert spaces}
\label{sec:hamiltonian-dyn}

\subsection{Definition}
\label{sec:hamitonian-flows}
A canonical Hamiltonian system in a Hilbert space $\fH$ is a triplet $(\fH, \omega, \cH)$ where  $\cH:\fH\to \bR$ is a so-called Hamiltonian function, which we assume to be differentiable, and $\omega$ is the canonical symplectic structure. In such a setting, the state variable $u\in \fH$ has two components, which we write as $u=(u^q,u^p)$. The variable $u^q$ represents the generalized position, and $u^p$ the conjugate momentum from Hamiltonian mechanics. Consequently, $\fH$ has the form of a product $\fH=\Hhat\times \Hhat$ with $\Hhat$ a Hilbert space with inner product $\inprod{\cdot}{\cdot}{\Hhat}$. We endow the space $\fH$ with its natural inner product
$$
  \inner{v,w}_{\fH} \coloneqq \inner{v^q,w^q}_{\Hhat}+\inner{v^p,w^p}_{\Hhat},
  \quad \forall v=(v^q,v^p),\; w=(w^q,w^p) \in \fH.
$$
The symplectic structure is defined as the bilinear form $\omega:\fH\times \fH\rightarrow\bR$ such that
\begin{equation*}
  \omega(u,v):=\inprodV{\cJ (u)}{v}, \quad \forall u,v\in \fH
\end{equation*}
and $\cJ:\fH\rightarrow \fH$ is the linear mapping
\begin{equation}\label{eq:J}
  \cJ \left(\prt*{v^q,v^p}\right)=(v^p,-v^q), \quad \forall v=(v^q,v^p) \in \fH.
\end{equation}
Note that $\cJ^2 = -\id$ and $\cJ$ is skew-adjoint with respect to the inner product of $\fH$, so that $\omega$ is skew-symmetric.

Given a canonical Hamiltonian system $(\fH, \omega, \cH)$, and an initial condition $u_0\in \fH$, the associated Hamiltonian dynamics is described by the differentiable curve $u:\fT\to \fH$ that satisfies
\begin{equation*}
  \begin{cases}
    \dot u(t) = \cJ\left(\grad_{\fH} \cH(u(t))\right)  \in \fH & \quad\forall t>0, \\
    u(0) = u_0\in\fH,
  \end{cases}
\end{equation*}
where, for any $v\in \fH$, $\grad_{\fH} \cH(v) \in \fH$ is the gradient of $\cH$ in $\fH$. It is defined as the Riesz representer of $(D\cH)_v\in \fH'$, the differential of $\cH$ at $v$, which is the unique element of $\fH$ such that
$$
  (D\cH)_v(w) = \inner{\grad_{\fH} \cH(v), w}_{\fH}, \quad \forall w\in \fH.
$$
Along solutions, the value of the Hamiltonian is preserved since, by the chain rule,
$$
  \frac{\d }{\d t} \cH(u(t))
  = (D\cH)_{u(t)}(\dot u (t))
  = \inner{\grad_{\fH} \cH(u), \dot u}_{\fH}
  = \omega(\grad_{\fH} \cH(u), \grad_{\fH} \cH(u))
  =0.
$$
Moreover, the flow $\Phi_t$ of a Hamiltonian system is a symplectic map, that is $\omega(\Phi_t(v_1),\Phi_t(v_2))=\omega(v_1,v_2)$ for all $v_1,v_2\in\fH$, see \cite[Section 18]{Cannas2001}.
To ensure that the approximate flow of a Hamiltonian problem is symplectic, one can enforce that the approximate velocity field  remains on the tangent space of a symplectic manifold. This is the idea behind our approach where we approximate the velocity $\dot{u}^{\dagger}$ in the tangent spaces of evolving symplectic vector spaces.

Many well-known PDEs can be expressed as Hamiltonian systems in Hilbert spaces. Some of the most common examples include wave models such as the Schrödinger equation and the Korteweg–de Vries equation, compressible and
incompressible Euler equations in fluid dynamics, Vlasov–Poisson and Vlasov–Maxwell equations in plasma physics.

As a concrete example, let us consider the Schrödinger equation on a Lipschitz domain $\fD\subseteq \bR^d$.
The problem is posed on $\fH=\Hhat\times\Hhat$ with $\Hhat=L^2(\fD)$, and the Hamiltonian function is defined as
\begin{equation*}
  \cH(u) =
  \begin{cases}
    \frac 1 2 \prt*{\sum_{i=1}^d\norm{\partial_{x_i} u}^2_{\fH}} - \frac \eps 4 \norm{u}_{\fH}^4, & \quad \forall u=(u^q,u^p) \in \dom(\cH)\coloneqq H^1_0(\fD)^2\cap L^4(\fD)^2, \\
    +\infty                                                                                       & \quad \forall  u=(u^q,u^p) \in \fH \setminus \dom(\cH).
  \end{cases}
\end{equation*}
Here the quantities $u^q$ and $u^p$ represent the real and imaginary part of a complex wave function. To define the associated Hamiltonian vector field, we compute the Fréchet subdifferential of $\cH$ at a given $u\in \dom(\cH)$, which is given for all $v\in \fH$ by
\begin{equation*}
  (D\cH^F)_u(v) =
  \begin{cases}
    \int_D \prt*{-\Delta u^q - \eps u^q\norm{u}_{\fH}^2 }v^q
    + \prt*{-\Delta u^p - \eps u^p\norm{u}_{\fH}^2 }v^p, & \quad \forall u \in \dom(D\cH^F)   ,            \\
    \emptyset,                                           & \quad \forall u \in \fH \setminus \dom(D\cH^F),
  \end{cases}
\end{equation*}
where
$\dom(D\cH^F) = H^2(\fD)^2\cap H^1_0(\fD)^2\cap L^6(\fD)^2$.
We can directly identify the gradient of $\cH$ for any $u \in \dom(D\cH^F)$ as
$$
  \grad_{\fH} \cH(u) =
  \begin{pmatrix}
    -\Delta u^q - \eps u^q\norm{u}_{\fH}^2 \\
    -\Delta u^p - \eps u^p\norm{u}_{\fH}^2
  \end{pmatrix}, \qquad \forall u \in \dom(D\cH^F).
$$
It follows that the strong form of the Hamiltonian dynamics of our example reads: given an initial condition $u_0\in \dom(D\cH^F)$, find $u=(u^q,u^p)$, such that for $t>0$,
\begin{equation*}
  \begin{cases}
    \partial_t {u}^q & = - \Delta u^p-\varepsilon ((u^q)^2+(u^p)^2)u^p, \\
    \partial_t {u}^p & = \Delta u^q+\varepsilon ((u^q)^2+(u^p)^2)u^q,
  \end{cases}
\end{equation*}
and $u(t=0)=u_0$.

To connect back with the filtering problem from \Cref{sec:filtering}, we need to consider that the Hamiltonian depends on parameters $\theta\in {\fTheta}$, so we write $\cH_\theta:\fH\to \bR$, and consider parametric Hamiltonian systems of the form
\begin{equation*}
  \begin{cases}
    \partial_t u = \cJ\left(\grad_{\fH} \cH_\theta(u)\right) & \quad (t,x)\in \fT\times \fD, \\
    u(\theta)(t=0, x) = u_0(\theta)(x)                       & \quad x\in\fD.
  \end{cases}
\end{equation*}
In the above example about the Schrödinger equation, $\eps$ could be seen as a parameter that ranges in a certain interval, and we could also consider a parametric family of initial conditions $u_0$. The family of parametric trajectories generates the set of solutions $\cU$ from \cref{eq:set-param-sols} for which we assume that assumptions (A1), (A2), (A3) hold.

\subsection{The manifold of orthosymplectic functions}
\label{sec:orthosymplectic}
To ensure that our reconstruction is a symplectic flow map, we approximate $u^\dagger(t)$ in spaces $\fH_{2n}(t)$ spanned by orthogonal symplectic bases. Exploiting the manifold structrure of such bases, we evolve them along the corresponding tangent spaces, which serve as the velocity approximation spaces from \Cref{sec:idealized-strategy} that we use to approximate $\dot{u}^\dagger(t)$.

The set of all $2n$-tuples of orthosymplectic functions is defined as
$$
  \cV_{2n} \coloneqq \{ \prt*{v_1,\dots, v_{2n}} \in \fH^{2n} \cond \inner{v_i,v_j}_{\fH}=\delta_{i,j} \text{ and } \omega(v_i,v_j)=(\bJ_{2n})_{i,j} \} \subset \fH^{2n},
$$
where
$$
  \bJ_{2n}\coloneqq \begin{bmatrix}
    0      & \bI_n \\
    -\bI_n & 0
  \end{bmatrix},
$$
and $\bI_n\in\bR^{n\times n}$ is the identity matrix.

For a given $V\in \cV_{2n}$, by considering a curve $Q:(-\eps,\eps)\to \cV_{2n}$ such that $Q(0)=V$, and differentiating in time the orthosymplectic constraint, we obtain that the tangent space of $\cV_{2n}$ at $V$ is given by
\begin{equation*}
  \tangent{V}{\cV_{2n}} = \{\prt*{\delta v_1,\dots, \delta v_{2n}} \in \fH^{2n} \cond \inner{\delta v_i,v_j}_{\fH}+\inner{v_i,\delta v_j}_{\fH}=0 \text{ and } \omega(\delta v_i,v_j)+\omega(v_i,\delta v_j)=0 \}.
\end{equation*}
To single out a unique parametrization of the elements of $\tangent{V}{\cV_{2n}}$, we impose gauge constraints and confine ourselves to
its horizontal component, defined as
\begin{equation}
  \label{eq:tangent-HV}
  \widetilde{\tangent{V}{\cV_{2n}}} \coloneqq \{\prt*{\delta v_1,\dots, \delta v_{2n}} \in \fH^{2n} \cond \inner{\delta v_i,v_j}_{\fH}=0 \text{ and } \delta v_i=\cJ(\delta v_{i+n})\;\forall 1\leq i\leq n \}
  \subset \tangent{V}{\cV_{2n}}.
\end{equation}
Note that, if $\delta V\in\widetilde{T_{V} \cV_{2n}}$, then $\delta v_i = \sum_{j=1}^{2n}(\bJ_{2n})_{i,j}\cJ(\delta v_j)$ for all $1\leq i\leq2n$.

In our scheme, we approximate $\dot{u}^\dagger(t)$ in a velocity space $\fV_{2n}(t)=\vspan\{ \delta V(t) \}$ where $\delta V(t)\in \widetilde{\tangent{V(t)}{\cV_{2n}}}$, and $V(t)\in \cV_{2n}$ will be the orthosymplectic basis spanning the approximation space $\fH_{2n}(t)=\vspan\{ V(t)\}$.

\subsection{Remarks on notation}
In what follows, we use the notation $$\inner{u,V}_{\fH}:=(\inner{u,v_i}_{\fH})_{i=1,\dots,2n}\in\bR^{2n}$$ and $$\inner{W,V}_{\fH}:=(\inner{w_i,v_i}_{\fH})_{i=1,\dots,2n}\in\bR^{2n}$$ for $u\in\fH$ and $V,W\in\fH^{2n}$. Moreover, we regard elements of $\fH^{2n}$ as column vectors in the following sense: for any scalar $\alpha\in\bR$ and for any matrix $\bA\in\bR^{s\times2n}$ we define $\alpha V:=(\alpha v_i)_{i=1,\dots,2n}\in\fH^{2n}$ and $\bA V:=(\sum_{j=1}^{2n}\bA_{i,j}v_j)_{i=1,\dots,s}\in\fH^s$.
Finally, for ease of notation we omit parentheses when applying the linear operators $\cJ$ and $\proj{\fV}$, for any subspace $\fV$ of $\fH$. We write $\cJ V:=(\cJ v_i)_{i=1,\dots,2n}\in\fH^{2n}$ for $V\in\fH^{2n}$, and $\proj{\fV}V:=(\proj{\fV}v_i)_{i=1,\dots,2n}\in\fH^{2n}$ for any $\fV\subset\fH$.

\section{The filtering scheme}
\label{sec:rec-strategy}
Our reconstruction algorithm follows the idealized approach introduced in \Cref{sec:idealized-strategy}, with the extra ingredient that we introduce a strategy to dynamically evolve the approximation space $\fH_{2n}(t)$:

\paragraph{At $t=0$:} We reconstruct $u^\dagger(0)$ as follows:
\begin{itemize}
  \item We build a linear approximation space $\fH_{2n}(0)=\vspan\{V(0)\} \subset \fH$ where $V(0)\in \cV_{2n}$.
        We postpone to \Cref{sec:filtering-scheme} the detailed description of how the	 orthosymplectic basis $V(0)$ is constructed.
  \item Given the measurements $\rw(0)$ and the approximation space $\fH_{2n}(0)$, we solve the least-squares problem
        \begin{equation}\label{eq:r0}
          r(0) = \argmin_{v\in \fH_{2n}(0)} \frac 1 2 \norm{\rw(0) - \proj{\Wm(0)}(v) }^2_{\fH}.
        \end{equation}
        This problem has a unique minimizer (see \cite{BCDDPW17,Mula2023}) provided
        $$
          \beta(\fH_{2n}(0), \Wm(0) ) >0,
        $$
        where, for any pair of subspaces $X, Y$ of $\fH$, $\beta(X,Y)$ is defined as
        \begin{equation*}
          \beta(X, Y) \coloneqq \inf_{x\in X} \sup_{y\in Y} \dfrac{\left< x, y \right>_{\fH}}{\norm{x}_{\fH}\norm{y}_{\fH}}
          \; \in [0, 1].
        \end{equation*}
        Moreover, it holds (see \cite{MPPY15, BCDDPW17,Mula2023})
        $$
          e(0)=\norm{u^\dagger(0)-A(0)(\rw(0))}_{\fH} \leq \beta^{-1}(\fH_{2n}(0), \Wm(0) ) \min_{v\in \fH_{2n}(0)} \norm{u^\dagger(0)-v}_{\fH}
        $$
        and
        $$
          E_{\star}(A(0),\Ucal(0),\fH_{2n}(0)) \leq \beta^{-1}(\fH_{2n}(0), \Wm(0) ) \delta_{\star}(\Ucal(0),\fH_{2n}(0))
          \quad \star=\{wc,\mu\}.$$
\end{itemize}
\paragraph{For $t>0$:} To approximate the trajectory $u^\dagger(t)$ by a symplectic flow, we seek a reconstruction of the form
\begin{equation*}
  r(t) = \sum_{i=1}^{2n} a_i(t)v_i(t),
\end{equation*}
where $a_i(t)\in \bR$, and $V(t)=\set{v_i(t)\in \fH}_{i=1}^{2n} \in \cV_{2n}$ is an orthosymplectic basis, so that $r(t) \in \fH_{2n}(t)=\vspan\{ V(t) \}$. Its velocity, which approximates $\dot{u}^\dagger(t)$, is
\begin{equation}\label{eq:rdot}
  \dot{r}(t) = \sum_{i=1}^{2n} \prt{\dot{a}_i(t) v_i(t)+a_i(t) \dot{v}_i(t)},
\end{equation}
where $\dot{V}(t) = \prt*{\dot{v}_1,\dots, \dot{v}_{2n}}\in T_{V(t)} \cV_{2n}$. At first glance, one would like to proceed similarly as for $t=0$, and solve
\begin{equation*}
  r(t) \in \argmin_{v \in \fH_{2n}(t)} \frac 1 2 \norm{ \rw(t) - \proj{\Wm}(v)}^2_{\fH}.
\end{equation*}
However, this will not provide a strategy to evolve $\fH_{2n}(t)$ over time. For this, one must work at the level of the velocity, and the first attempt would be to consider
\begin{equation}
  \label{eq:rec-vel-1}
  (\dot{a}(t), \dot{V}(t)  ) \in \argmin_{(\delta a, \delta V) \in \bR^{2n}\times  T_{V(t)} \cV_{2n}}
  \frac 1 2 \Norm{ \dot{\rw}(t) -  \proj{\Wm}\Big(\sum_{i=1}^{2n}\prt{\delta a_i v_i(t) + a_i(t)\delta v_i}\Big)}^2_{\fH}.
\end{equation}
We would then recover $\dot{r}(t)$ as in \eqref{eq:rdot},
and obtain $r(t)$ by time integration.  Unfortunately, the necessary conditions connected to \eqref{eq:rec-vel-1} are not enough to prescribe a well-defined evolution for the orthosymplectic basis $V(t)$. We only obtain an evolution of $V(t)$ on certain subspaces of $\fH$, and not in the whole ambient space.
To overcome this issue, we search for a space $\fH_{2n}(t)$ which not only approximates well $u^\dagger(t)=u(\theta^\dagger)(t,\cdot)$, but also $u(\theta)(t,\cdot)$ for all $\theta\in {\fTheta}$. Thus, for all $\theta\in {\fTheta}$, we approximate $u(\theta)(t,\cdot)$ with
$$
  s(\theta)(t,\cdot) = \sum_{i=1}^{2n} c_i(t)(\theta) v_i(t),
$$
and $\dot{u}(\theta)(t,\cdot)$ with
$$
  \dot{s}(\theta)(t,\cdot) = \sum_{i=1}^{2n} \prt{\dot{c}_i(t)(\theta) v_i(t)+c_i(t)(\theta) \dot{v}_i(t)}.
$$
Consequently, we search for
\begin{align*}
  (\dot{a}(t), \dot{c}(t), \dot{V}(t)  )
  \in
   & \argmin_{ \substack{(\delta a, \delta V) \in \bR^{2n}\times  \widetilde{T_{V(t)} \cV_{2n}}                        \\ \delta c(\theta) \in \bR^{2n},\,\forall \theta\in {\fTheta} }}
  \frac12 \Norm{\dot{\rw}(t) -  \proj{\Wm}\Big(\sum_{i=1}^{2n}\prt{\delta a_i v_i(t) + a_i(t)\delta v_i}\Big)}^2_{\fH} \\
   & + \frac \lambda 2 \int_{\fTheta}
  \Norm{\cJ\prt{\grad_{\fH} \cH_\theta(s(\theta)(t,\cdot))} - \sum_{i=1}^{2n}\prt{\delta c_i(\theta) v_i(t) + c_i(t)(\theta)\delta v_i}}^2_{\fH} \mu(\d \theta),
\end{align*}
where $\lambda>0$ is a regularization parameter, and where $\dot{V}(t)$ is searched in the horizontal component $\widetilde{T_{V(t)} \cV_{2n}}$ \eqref{eq:tangent-HV} of the tangent space $T_{V(t)} \cV_{2n}$. This is done to ensure uniqueness of the minimizer as we show later on in \Cref{thm:dynamics-gamma}.

\section{Reformulation with symplectic decoders}
\label{sec:reformulation-decoders}
We next express the above approximation strategy in the language of decoder mappings. This will allow us to build a higher conceptual abstraction of our procedure, and it will also ease certain technical developments related to the analysis of the properties of the scheme.

From \Cref{sec:rec-strategy}, our strategy for filtering is based on a dynamical approximation of both:
\begin{itemize}
  \item the unknown function $u^\dagger(t)\in \fH$, and
  \item the family of parametrized Hamiltonian trajectories $u(\theta)(t,\cdot)$ for $\theta\in {\fTheta}$ from \eqref{eq:solution-set-t}, which is a subset of $\fL$.
\end{itemize}
Consequently, we define a decoder mapping of the form
\begin{align*}
  \varphi:\fGamma & \to \fH \times \fL                                             \\
  \gamma          & \mapsto \varphi(\gamma)=(\varphi_1(\gamma), \varphi_2(\gamma))
\end{align*}
where the parameter space of the decoder is defined as the set of all elements $\gamma=(a, c, V)$ in
$$
  \fGamma \coloneqq  \bR^{2n} \times L^2(({\fTheta},\mu); \bR^{2n})\times \cV_{2n}.
$$
For every $\gamma =(a,c,V) \in \fGamma$, the decoder mapping $\varphi$ is defined as
\begin{equation*}
  \varphi(\gamma) \coloneqq (\varphi_1(\gamma), \varphi_2(\gamma)),
  \quad
  \text{where}
  \begin{cases}
    \varphi_1(\gamma) & = \sum_{i=1}^{2n} a_i v_i, \\
    \varphi_2(\gamma) & = \sum_{i=1}^{2n} c_i v_i. \\
  \end{cases}
\end{equation*}
These objects depend on given $x\in \fD$ and $\theta\in {\fTheta}$ as
\begin{equation*}
  \begin{cases}
    \varphi_1(\gamma)(x)         & = \sum_{i=1}^{2n} a_i v_i(x),         \\
    \varphi_2(\gamma)(\theta)(x) & = \sum_{i=1}^{2n} c_i(\theta) v_i(x). \\
  \end{cases}
\end{equation*}
In our filtering scheme, we want to build a time-dependent curve $\gamma:\fT\to \fGamma$ such that
\begin{align*}
  \begin{cases}
    u^\dagger(t)(x) \approx \varphi_1(\gamma(t))(x)        & = \sum_{i=1}^{2n} a_i(t) v_i(t)(x),         \\
    u(\theta)(t, x)\approx \varphi_2(\gamma(t))(\theta)(x) & = \sum_{i=1}^{2n} c_i(t)(\theta) v_i(t)(x). \\
  \end{cases}
\end{align*}
We may observe that $\varphi_1(\gamma(t))$ plays the role of $r(t)$ in our previous description of the scheme of \cref{sec:rec-strategy}, and $\varphi_2(\gamma(t))$ represents $s(\theta)(t,\cdot)$.

We next discuss some properties of $\varphi$ that are relevant to present the final formulation of our filtering scheme with the decoder, as well as to analyze its properties.

\subsection{Properties of the decoder}
\label{sec:decoder-def}

\paragraph{Differentiability of $\varphi$ and tangent spaces.}
$\fGamma$ is a manifold embedded in the space
$$
  \fG \coloneqq \bR^{2n} \times L^2(({\fTheta},\mu); \bR^{2n}) \times \fH^{2n}
$$ with inner product
$$
  \inner{{\gamma}_1, {\gamma}_2 }_{\fG}
  \coloneqq
  \inner{{a}_1, {a}_2 }_{\bR^{2n}}
  +
  \inner{{c}_1, {c}_2 }_{L^2(({\fTheta},\mu); \bR^{2n})}
  +
  \sum_{i=1}^{2n} \inner{({v}_{1})_i, ({v}_{2})_i }_{\fH}, \quad \forall \gamma_1,\gamma_2\in \fGamma.
$$
For every $\gamma=(a,c,V)\in\fGamma$, the tangent space of $\fGamma$ at a point $\gamma =(a,c,V)$ is given by
$$
  \tangent{\gamma}{\fGamma} = \bR^{2n} \times L^2(({\fTheta},\mu); \bR^{2n}) \times \tangent{V}{\cV_{2n}} \; \subset \fG.
$$
The following Lemma shows that $\varphi$ is differentiable.

\begin{lemma}
  \label{lem:differentiability}
  For every $\gamma\in \fGamma$, $\varphi$ is differentiable, and the differential map $(D\varphi)_\gamma \in \Lin(\tangent{\gamma}{\fGamma}; \fH\times \fL)$ is such that
  \begin{equation}\label{eq:diff_map}
    (D\varphi)_{\gamma}(\delta\gamma)
    = \Big(\sum_{i=1}^{2n}  \big(\delta a_iv_i + a_i \delta v_i\big), \sum_{i=1}^{2n}  \big(\delta c_iv_i + c_i \delta v_i\big)\Big),
    \quad \forall \delta\gamma=(\delta a, \delta c, \delta V) \in \tangent{\gamma}{\fGamma}.
  \end{equation}
\end{lemma}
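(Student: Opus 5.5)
The plan is to extend $\varphi$ to the ambient linear space $\fG = \bR^{2n}\times L^2((\fTheta,\mu);\bR^{2n})\times \fH^{2n}$, by the same formulas
\[
\tilde\varphi(a,c,V)=\Big(\sum_{i=1}^{2n} a_i v_i,\ \sum_{i=1}^{2n} c_i v_i\Big).
\]
We then show that $\tilde\varphi$ is Fréchet differentiable on $\fG$. Since $\fGamma\subset\fG$ is an embedded submanifold, the differential of $\varphi=\tilde\varphi|_{\fGamma}$ at $\gamma$ is the restriction of $(D\tilde\varphi)_\gamma$ to $\tangent{\gamma}{\fGamma}\subset\fG$. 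Equivalently, for any $\cC^1$ curve $\gamma(\cdot)$ in $\fGamma$ with $\gamma(0)=\gamma$ and $\dot\gamma(0)=\delta\gamma$, we have $(D\varphi)_\gamma(\delta\gamma)=\frac{\d}{\d s}\tilde\varphi(\gamma(s))|_{s=0}$.

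The key observation is that each component of $\tilde\varphi$ is a bounded bilinear map.
\begin{itemize}
\item $\varphi_1$ is the pairing $\bR^{2n}\times\fH^{2n}\to\fH$, $(a,V)\mapsto\sum_i a_i v_i$. By Cauchy--Schwarz, $\norm{\sum_i a_i v_i}_{\fH}\le |a|\,(\sum_i\norm{v_i}_{\fH}^2)^{1/2}$.
\item $\varphi_2$ is the pairing $L^2((\fTheta,\mu);\bR^{2n})\times\fH^{2n}\to\fL$, $(c,V)\mapsto(\theta\mapsto\sum_i c_i(\theta)v_i)$. Pointwise in $\theta$, the same Cauchy--Schwarz estimate gives $\norm{\sum_i c_i(\theta)v_i}_{\fH}\le|c(\theta)|\,(\sum_i\norm{v_i}^2_{\fH})^{1/2}$. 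Squaring and integrating against $\mu$ yields $\norm{\varphi_2}_{\fL}\le\norm{c}_{L^2}\norm{V}_{\fH^{2n}}$.
\end{itemize}
Before this, one small check: $\theta\mapsto\sum_i c_i(\theta)v_i$ is strongly measurable, hence genuinely an element of $\fL$. This holds because it is a finite sum of scalar measurable functions times fixed vectors.

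A bounded bilinear map $B$ is Fréchet differentiable, with $(DB)_{(x,y)}(\delta x,\delta y)=B(\delta x,y)+B(x,\delta y)$. The remainder is $B(\delta x,\delta y)$, which is $O(\norm{\delta x}\norm{\delta y})=o(\norm{(\delta x,\delta y)})$. Applied componentwise, this gives
\[
(D\tilde\varphi)_\gamma(\delta\gamma)=\Big(\sum_i(\delta a_i v_i+a_i\delta v_i),\ \sum_i(\delta c_i v_i+c_i\delta v_i)\Big),
\]
which is linear and bounded in $\delta\gamma\in\fG$. Restricting to $\tangent{\gamma}{\fGamma}$ gives \eqref{eq:diff_map}.

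The main point needing care is the justification that the restriction to the manifold is legitimate. We need $\cV_{2n}$ to be a (Hilbert) submanifold of $\fH^{2n}$ whose tangent space is the one described in \Cref{sec:orthosymplectic}. I would take this from the paper's setup. Alternatively, I would work only with curves: the chain rule for $\tilde\varphi\circ\gamma(\cdot)$ holds for any differentiable curve in $\fG$, which suffices for all later uses. The remaining estimates are routine.
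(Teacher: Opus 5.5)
Your proposal is correct and takes essentially the paper's route: the paper also gets differentiability from the fact that $\varphi$ is built from sums and products of the coordinates of $\gamma$, and then identifies $(D\varphi)_\gamma(\delta\gamma)$ by differentiating $\varphi\circ\Gamma$ along a curve $\Gamma$ in $\fGamma$ with $\Gamma'(0)=\delta\gamma$, which is exactly your curve-based alternative. Your bounded-bilinear framing, including the estimate $\norm{\varphi_2(\gamma)}_{\fL}\le\norm{c}_{L^2}\norm{V}_{\fH^{2n}}$ and the measurability check, makes rigorous the paper's one-line ``sum and product'' justification, and like the paper you take the embedded-manifold structure of $\fGamma$ (and hence $\tangent{\gamma}{\fGamma}$) from the setup.
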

\begin{proof}
  Since $\varphi(\gamma)$ is a sum and a product of the input coordinates of $\gamma$, $\varphi$ is differentiable, and the differential at a point $\gamma\in\fGamma$ is a linear mapping $(D\varphi)_\gamma \in \Lin(\tangent{\gamma}{\fGamma}; \fH\times\fL)$ which maps elements from $\tangent{\gamma}{\fGamma}$ to $\fH\times\fL$. Let $\gamma\in \fGamma$, and consider a differentiable curve $\Gamma:(-\eps,\eps)\to \fGamma$ with $\Gamma(0)=\gamma=(a,c,V)\in \fGamma$ and $\Gamma'(0)=\prt{\dot{a}, \dot{c}, \dot{V}}\in \tangent{\gamma}{\fGamma}$. Since $\varphi$ is differentiable, by the chain rule,
  $$
    \left.\frac{\d (\varphi\circ \Gamma)}{\dt}(t)\right\vert_{t=0}
    = (D\varphi)_{\gamma}(\Gamma'(0))
    = \Big(\sum_{i=1}^{2n} \big(\dot{a_i}v_i + a_i \dot{v}_i\big), \sum_{i=1}^{2n}  \big(\dot{c_i}v_i + c_i \dot{v}_i\big)\Big).
  $$
\end{proof}

The following Lemma is a technical result that expresses that $(D\varphi)_\gamma$ has a certain adjoint property. We explain in \Cref{rem:adjoint-differential} how exactly this property connects to the formal adjoint of $(D\varphi)_\gamma$.

\begin{lemma}
  \label{lem:adjoint-like}
  For every $\gamma=(a,c,V)\in \fGamma$,
  $$
    \inner{(D\varphi)_{\gamma}(\delta \gamma), (h,\ell)}_{\fH\times\fL}
    =
    \inner{\delta\gamma, T(h, \ell)}_{\fG},
    \quad \forall \delta\gamma\in \tangent{\gamma}{\fGamma},\; (h,\ell)\in \fH\times\fL,
  $$
  where
  \begin{equation}\label{eq:Tadj}
    \begin{aligned}
      T: \fH\times\fL & \to \fG                                   \\
      (h,\ell)        & \mapsto T(h,\ell)=(\inner{h,V}_{\fH},f,g)
    \end{aligned}
  \end{equation}
  and
  \begin{align*}
     & f(\theta) \coloneqq \inner{\ell(\theta), V}_{\fH},
    \quad \forall \theta \in {\fTheta} \; \text{$\mu$-a.e.}                                                                                    \\
     & g\in\fH^{2n},\quad g_i \coloneqq a_i h + \int_{{\fTheta}} c_i(\theta)\ell(\theta) \mu(\d \theta) \in \fH,\quad \forall\,1\leq i\leq 2n.
  \end{align*}
\end{lemma}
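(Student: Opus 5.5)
The identity is a direct computation. I would expand the left-hand side using the explicit form of the differential from \Cref{lem:differentiability} and then regroup the terms by the three components of $\delta\gamma=(\delta a,\delta c,\delta V)$.

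Fix $\gamma=(a,c,V)\in\fGamma$, $\delta\gamma\in\tangent{\gamma}{\fGamma}$ and $(h,\ell)\in\fH\times\fL$. By \eqref{eq:diff_map} and the definition of the product inner product on $\fH\times\fL$, the left-hand side splits into two pieces:
\[
\Big\langle \sum_{i=1}^{2n}(\delta a_i v_i + a_i\delta v_i), h\Big\rangle_{\fH}
+ \int_{\fTheta}\Big\langle \sum_{i=1}^{2n}(\delta c_i(\theta) v_i + c_i(\theta)\delta v_i), \ell(\theta)\Big\rangle_{\fH}\,\mu(\d\theta).
\]
Using bilinearity, the first piece equals $\sum_i \delta a_i\inner{v_i,h}_{\fH} + \sum_i \inner{\delta v_i, a_i h}_{\fH}$.

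In the second piece I would do the same pointwise in $\theta$ and then integrate term by term. This produces two kinds of terms. The first is $\int_{\fTheta}\sum_i \delta c_i(\theta)\inner{\ell(\theta),v_i}_{\fH}\,\mu(\d\theta)$, which is exactly $\inner{\delta c,f}_{L^2((\fTheta,\mu);\bR^{2n})}$. The second is $\sum_i\int_{\fTheta}\inner{\delta v_i, c_i(\theta)\ell(\theta)}_{\fH}\,\mu(\d\theta)$.

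For the second kind of term, the key step is to pull the integral inside the inner product, which gives $\inner{\delta v_i, \int_{\fTheta} c_i(\theta)\ell(\theta)\,\mu(\d\theta)}_{\fH}$. This interchange is the only point needing care, and I expect it to be the main obstacle. I would justify it by showing that $\theta\mapsto c_i(\theta)\ell(\theta)$ is Bochner integrable in $\fH$. By Cauchy--Schwarz,
\[
\int_{\fTheta}|c_i(\theta)|\,\norm{\ell(\theta)}_{\fH}\,\mu(\d\theta) \le \norm{c_i}_{L^2(\mu)}\norm{\ell}_{\fL} < \infty,
\]
so the integral is well defined. A bounded linear functional $\inner{\delta v_i,\cdot}_{\fH}$ commutes with the Bochner integral, which justifies the interchange. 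The same Cauchy--Schwarz argument shows $f\in L^2((\fTheta,\mu);\bR^{2n})$, since $|f_i(\theta)|\le\norm{\ell(\theta)}_{\fH}$ because $\norm{v_i}_{\fH}=1$. Together these confirm that $T(h,\ell)\in\fG$.

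Finally, I would collect the terms by component. The $\delta a$-terms give $\inner{\delta a,\inner{h,V}_{\fH}}_{\bR^{2n}}$, and the $\delta c$-terms give $\inner{\delta c,f}$. The $\delta V$-terms combine into
\[
\sum_i\Big\langle\delta v_i, a_ih+\int_{\fTheta} c_i\ell\,\mu(\d\theta)\Big\rangle_{\fH}=\sum_i\inner{\delta v_i,g_i}_{\fH}.
\]
Adding the three contributions gives $\inner{\delta\gamma,T(h,\ell)}_{\fG}$ by the definition of $\inner{\cdot,\cdot}_{\fG}$. Note that the argument never uses the tangency constraints on $\delta V$; it holds for all $\delta\gamma\in\fG$. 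This is why $T$ is only "adjoint-like": the true adjoint would additionally require projecting $g$ onto $\tangent{V}{\cV_{2n}}$, as \Cref{rem:adjoint-differential} will explain.
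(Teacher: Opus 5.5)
Your proposal is correct and follows the same route as the paper: you expand $\inner{(D\varphi)_{\gamma}(\delta\gamma),(h,\ell)}_{\fH\times\fL}$ via \eqref{eq:diff_map}, identify the $\delta c$-terms with $\inner{\delta c,f}_{L^2((\fTheta,\mu);\bR^{2n})}$, and pull the $\theta$-integral inside the $\fH$-inner product so that the $\delta V$-terms collapse to $\sum_{i=1}^{2n}\inner{\delta v_i,g_i}_{\fH}$. Your Bochner-integrability argument, which also confirms $T(h,\ell)\in\fG$, makes explicit the integral--inner-product interchange that the paper performs without comment.
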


\begin{proof}
  The definition of the differential map $(D\varphi)_{\gamma}$ in \eqref{eq:diff_map} yields, for a generic $\delta\gamma=(\delta a,\delta c, \delta V)\in \tangent{\gamma}{\fGamma}$ and $(h,\ell)\in\fH\times\fL$,
  \begin{equation}\label{eq:inprod_diff_map}
    \inner{(D\varphi)_{\gamma}(\delta\gamma), (h,\ell)}_{\fH\times\fL} = \inner{\delta a,\inner{h,V}_{\fH}}_{\bR^{2n}} + \sum_{i=1}^{2n}\inner{\delta c_iv_i, \ell}_{\fL} +\sum_{i=1}^{2n}\left(a_i\inner{\delta v_i,h}_{\fH} + \inner{c_i\delta v_i,\ell}_{\fL}\right).
  \end{equation}
  The second term in \eqref{eq:inprod_diff_map} gives
  \begin{equation*}
    \sum_{i=1}^{2n}\inner{\delta c_iv_i, \ell}_{\fL} = \sum_{i=1}^{2n}\int_{\fTheta} \delta c_i(\theta)\inner{v_i,\ell(\theta)}_{\fH}\,\mu(\d\theta) = \int_{\fTheta}\inner{\delta c(\theta),\inner{\ell(\theta),V}_{\fH}}_{\bR^{2n}}\,\mu(\d\theta) = \inner{\delta c,f}_{L^2(({\fTheta},\mu);\bR^{2n})}.
  \end{equation*}
  The last term in \eqref{eq:inprod_diff_map} can be written as
  \begin{equation*}
    \sum_{i=1}^{2n}\left(\inner{\delta v_i,a_ih}_{\fH}+\int_{\fTheta}c_i(\theta)\inner{\delta v_i,\ell(\theta)}_{\fH}\mu(\d\theta)\right)
    = \sum_{i=1}^{2n}\left\langle\delta v_i, a_ih+\int_{\fTheta} c_i(\theta)\ell(\theta)\,\mu(\d\theta)\right\rangle_{\fH} = \inner{\delta v, g}_{{\fH}^{2n}}.
  \end{equation*}
\end{proof}

\begin{remark}
  \label{rem:adjoint-differential}
  The adjoint of $(D\varphi)_{\gamma}:\tangent{\gamma}{\fGamma} \to \fH\times\fL $ is the map $(D\varphi)^*_\gamma : \dom((D\varphi)^*_\gamma) \to \tangent{\gamma}{\fGamma} $, where
  $$
    \dom((D\varphi)^*_\gamma)
    \coloneqq \{ (h,\ell)\in \fH\times\fL\cond  \exists \delta \eta \in \tangent{\gamma}{\fGamma} \st
    \inner{(D\varphi)_{\gamma}(\delta \gamma), (h,\ell)}_{\fH\times\fL}
    =
    \inner{\delta \gamma, \delta \eta}_{\fG},
    \; \forall \delta \gamma\in \tangent{\gamma}{\fGamma}
    \}.
  $$
  The mapping $T$ from \eqref{eq:Tadj} is related to the adjoint of $(D\varphi)_\gamma$ in the sense that $(D\varphi)^*_\gamma = T \vert_{\dom((D\varphi)^*_\gamma)}$.
\end{remark}

\paragraph{Injectivity of the differential map.} We next show that the differential $(D\varphi)_\gamma$ is injective under certain restrictions on $\gamma$, and on the input space $T_\gamma \fGamma$ where $(D\varphi)_\gamma$ is defined. This property will be crucial to determining the parameter dynamics of our filtering scheme.

Let $\bC:L^2(({\fTheta},\mu); \bR^{2n})\to \bR^{2n\times 2n}$ be the matrix-valued function defined as
\begin{equation}\label{eq:matrixC}
  (\bC(c))_{i,j}:=
  \int_{{\fTheta}} c_i(\theta)c_j(\theta)\, \mu(\d\theta),\qquad\forall\, 1\leq i,j\leq 2n, \quad \forall c\in L^2((\fTheta,\mu);\bR^{2n}).
\end{equation}
We next introduce the following subset of $L^2(({\fTheta},\mu); \bR^{2n})$
\begin{equation}\label{eq:space-C2n}
  \cC_{2n}\coloneqq \{c\in L^2(({\fTheta},\mu); \bR^{2n})\cond \mathrm{rank}(\bS(c))=2n\},
\end{equation}
where
\begin{equation}
  \label{eq:S-matrix}
  \bS(c) \coloneqq \bC(c)+\bJ_{2n}^{\top}\bC(c)\bJ_{2n}
\end{equation}
for $\bC(c)$ defined in \eqref{eq:matrixC}. The matrix $\bS(c)$ is symmetric and positive semidefinite. Indeed, for every $z\in\bR^{2n}$,
\begin{equation*}
  z^\top\bS(c)z
  =\int_{\fTheta}\left(\prt*{z^\top c(\theta)}^2+\prt*{(\bJ_{2n}z)^\top c(\theta)}^2\right)\mu(\d\theta)\geq0.
\end{equation*}
Consequently, the full-rank condition in \eqref{eq:space-C2n} is equivalent to the positive definiteness of $\bS(c)$. We define
$$
  \widetilde{\fGamma} \coloneqq \bR^{2n} \times \cC_{2n} \times \cV_{2n} \subset \fGamma.
$$
Finally, we also introduce
$$
  \widetilde{\tangent{\gamma}{\fGamma}} = \bR^{2n} \times L^2(({\fTheta},\mu); \bR^{2n}) \times \widetilde{\tangent{V}{\cV_{2n}}} \;
  \subset \tangent{\gamma}{\fGamma} \;
  \subset \fG .
$$
Based on these two subsets, we have the following injectivity result.

\begin{lemma}
  \label{lem:injective-decoder}
  For every $\gamma \in \widetilde{\fGamma}$, the restriction of $(D\varphi)_{\gamma}$ to $\widetilde{\tangent{\gamma}{\fGamma}}$,
  \begin{align*}
    (D\varphi)_{\gamma} \vert_{\widetilde{\tangent{\gamma}{\fGamma}}}: \; \widetilde{\tangent{\gamma}{\fGamma}} & \longrightarrow \fH\times \fL                                                                                         \\
    (\delta a, \delta c, \delta V)                                                                              & \longmapsto \bigg(\sum_{i=1}^{2n} (\delta a_i v_i+a_i\delta v_i), \sum_{i=1}^{2n}(\delta c_i v_i+c_i\delta v_i)\bigg)
  \end{align*}
  is an injective mapping.
\end{lemma}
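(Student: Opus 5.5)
The plan is to show that the kernel of $(D\varphi)_{\gamma}\vert_{\widetilde{\tangent{\gamma}{\fGamma}}}$ is trivial. Fix $\gamma=(a,c,V)\in\widetilde{\fGamma}$ and $\delta\gamma=(\delta a,\delta c,\delta V)\in\widetilde{\tangent{\gamma}{\fGamma}}$ with $(D\varphi)_\gamma(\delta\gamma)=0$. By \Cref{lem:differentiability} this means
\begin{equation*}
  \sum_{i=1}^{2n}(\delta a_i v_i+a_i\delta v_i)=0 \quad\text{in } \fH,
  \qquad
  \sum_{i=1}^{2n}(\delta c_i(\theta) v_i+c_i(\theta)\delta v_i)=0 \quad \text{for } \mu\text{-a.e. } \theta .
\end{equation*}
We must show that $\delta a$, $\delta c$ and $\delta V$ all vanish.

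\emph{Step 1 (coefficients).} Take the $\fH$-inner product of both identities with $v_j$. Orthonormality of $V$ isolates $\delta a_j$ and $\delta c_j(\theta)$. The horizontal gauge $\inner{\delta v_i,v_j}_{\fH}=0$ from \eqref{eq:tangent-HV} kills the $\delta V$ terms. Hence $\delta a=0$ and $\delta c=0$ $\mu$-a.e., and the second identity reduces to $c(\theta)^\top\delta V=\sum_i c_i(\theta)\delta v_i=0$ for $\mu$-a.e. $\theta$.

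\emph{Step 2 (the basis velocity).} This is the main step, and the only place where $c\in\cC_{2n}$ enters. Multiply $\sum_i c_i(\theta)\delta v_i=0$ by $c_k(\theta)$ and integrate against $\mu$. This gives $\bC(c)\,\delta V=0$ in $\fH^{2n}$, using the matrix-on-$\fH^{2n}$ convention of the notation section.

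Next use the second gauge condition. The note after \eqref{eq:tangent-HV} says $\delta V=\bJ_{2n}\cJ\delta V$. Since $\bJ_{2n}^{-1}=\bJ_{2n}^\top$, this gives $\cJ\delta V=\bJ_{2n}^\top\delta V$. Applying the linear map $\cJ$ to the reduced identity gives $\sum_i c_i(\theta)\cJ\delta v_i=0$, that is, $c(\theta)^\top\bJ_{2n}^\top\delta V=0$ $\mu$-a.e. Multiplying by $c_k(\theta)$ and integrating yields $\bC(c)\bJ_{2n}^\top\delta V=0$. Left-multiplying by $\bJ_{2n}$ and using $\bJ_{2n}=-\bJ_{2n}^\top$ gives $\bJ_{2n}^\top\bC(c)\bJ_{2n}\,\delta V=0$.

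Adding the two relations gives $\bS(c)\,\delta V=0$ with $\bS(c)$ as in \eqref{eq:S-matrix}. Since $c\in\cC_{2n}$, $\bS(c)$ is invertible. Applying $\bS(c)^{-1}$ componentwise (a real matrix acting on $\fH^{2n}$) gives $\delta V=0$.

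The delicate point is bookkeeping rather than analysis. One must check that real $2n\times2n$ matrices acting on $\fH^{2n}$ commute with $\cJ$ and with integration in $\theta$. One must also get the sign and transpose conventions in $\delta V=\bJ_{2n}\cJ\delta V$ right, so that the combination obtained is exactly $\bS(c)$ and not $\bC(c)-\bJ_{2n}^\top\bC(c)\bJ_{2n}$. Integrability of $c_k c_i$ is guaranteed because $c\in L^2((\fTheta,\mu);\bR^{2n})$.

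Note that the first identity is not needed beyond yielding $\delta a=0$, so no condition on $a$ is required.
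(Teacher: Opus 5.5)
Your proof is correct and follows the paper's argument step for step. Orthonormality together with the horizontal gauge $\inner{\delta v_i,v_j}_{\fH}=0$ removes $\delta a$ and $\delta c$. Testing $\sum_i c_i\delta v_i=0$ against $c_k$, before and after applying $\cJ$ with $\cJ\delta V=\bJ_{2n}^\top\delta V$, gives $\bS(c)\,\delta V=0$, and invertibility of $\bS(c)$ for $c\in\cC_{2n}$ then forces $\delta V=0$; your sign bookkeeping, $\bJ_{2n}\bC(c)\bJ_{2n}^\top=\bJ_{2n}^\top\bC(c)\bJ_{2n}$, is also right.
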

\begin{proof}
  Let $\gamma=(a,c,V)\in \widetilde{\fGamma}$, and let $\delta\gamma=(\delta a, \delta c, \delta V)\in \widetilde{\tangent{\gamma}{\fGamma}}$ be such that $(D\varphi)_{\gamma}(\delta\gamma) = 0$. This is equivalent to
  \begin{equation*}
    \sum_{i=1}^{2n}\big(\delta a_iv_i+a_i\delta v_i\big) = 0 \quad\text{ and }\quad
    \sum_{i=1}^{2n}\big(\delta c_i v_i+c_i\delta v_i\big)= 0.
  \end{equation*}
  We next show that this implies that $\delta \gamma =0$, which proves injectivity of  $(D\varphi)_{\gamma} \vert_{\widetilde{\tangent{\gamma}{\fGamma}}}$.

  Since $\inner{\delta v_i, v_j}_{\fH}=0$ and $\inner{v_i,v_j}_{\fH}=\delta_{i,j}$ for all $1\leq i, j\leq 2n$, it follows that $\delta a_i=\delta c_i=0$ for all $1\leq i\leq 2n$.
  This implies
  \begin{equation}
    \label{eq:varphi-injective-1}
    \sum_{i=1}^{2n} a_i\delta v_i = 0 \quad\text{ and }\quad
    \sum_{i=1}^{2n} c_i\delta v_i= 0.
  \end{equation}
  We next show that $\delta V=0$. Multiplying by $c_j$ the second equality in \eqref{eq:varphi-injective-1} and integrating over $\fTheta$, we obtain
  \begin{equation}
    \label{eq:E-cond}
    0 = \sum_{j=1}^{2n} \bC_{i,j}(c) \delta v_j, \quad 1\leq i\leq 2n
    \quad \Longleftrightarrow \quad
    \bC(c) \delta V = 0\in\fH^{2n}.
  \end{equation}
  We next apply the operator $\cJ$ to the above equation \eqref{eq:E-cond} and use the fact that $\delta V\in\widetilde{\tangent{V}{\cV_{2n}}}$.
  This gives
  \begin{equation}\label{eq:S-cond2}
    \bC(c)\bJ_{2n}^\top \delta V = 0 \quad \Longleftrightarrow \quad
    \bJ_{2n}^{\top}\bC(c)\bJ_{2n} \delta V = 0.
  \end{equation}
  Adding \Cref{eq:E-cond} and \Cref{eq:S-cond2} yields
  $$
    \bS(c)\delta V = 0,
  $$
  where $\bS(c)$ was defined in \eqref{eq:S-matrix}.
  Since $c\in \cC_{2n}$, the matrix $\bS(c)$ is full rank, thus invertible. Therefore $\delta V=0$, and this concludes the proof of  injectivity of $(D\varphi)_{\gamma} \vert_{\widetilde{\tangent{\gamma}{\fGamma}}}$.
\end{proof}

\subsection{Back to the filtering scheme: derivation of a parameter curve}
\label{sec:filtering-scheme}
Our goal is to find a well-chosen initial parameter $\gamma(0)$, and then to build a smooth curve $\gamma : \fT \to \fGamma $ such that, for every time $t>0$,
$$
  \frac{\d}{\dt} \varphi_1(\gamma(t)) \approx \dot{u}^\dagger(t),
  \quad \text{and}\quad
  \frac{\d}{\dt} \varphi_2(\gamma(t))(\theta) \approx \dot{u}(\theta)(t,\cdot), \quad \forall \theta \in {\fTheta}.
$$
Assuming that we have a smooth curve $\gamma\in \cC^1(\fT; \fGamma)$, we obtain by the chain rule and \Cref{lem:differentiability}
\begin{equation*}
  \frac{\d}{\dt} \varphi(\gamma(t)) = \prt*{D\varphi}_{\gamma(t)} \big(\dot{\gamma}(t)\big)
  =\prt*{\sum_{i=1}^{2n}  (\dot{a_i}v_i + a_i \dot{v}_i), \sum_{i=1}^{2n}  (\dot{c_i}v_i + c_i \dot{v}_i)}.
\end{equation*}
This translates into searching for
\begin{equation}\label{eq:dynamics-gamma}
  \dot{\gamma}(t)=(\dot{a}(t), \dot{c}(t), \dot{V}(t)  )
  \in \argmin_{ \delta \gamma \in  \widetilde{\tangent{\gamma(t)}{\fGamma}}} \cL_{\gamma(t)}(\delta \gamma), \quad \forall t>0
\end{equation}
where $\cL_{\gamma(t)}:\widetilde{\tangent{\gamma(t)}{\fGamma}} \to \bR$ is defined for all $\delta \gamma = (\delta a, \delta c, \delta V)\in \widetilde{\tangent{\gamma(t)}{\fGamma}}$ as
\begin{equation}\label{eq:loss}
  \begin{aligned}
    \cL_{\gamma(t)}(\delta \gamma)
    \coloneqq & \, \frac 1 2 \Norm{ \dot{\rw}(t) - \proj{\Wm}\Big(\sum_{i=1}^{2n}\prt{\delta a_i v_i(t) + a_i(t)\delta v_i}\Big)}^2_{\fH}                                                                        \\
              & + \frac \lambda 2 \int_{\fTheta} \Norm{\cJ\prt{\grad_{\fH} \cH_\theta(\varphi_2(\gamma(t))} - \sum_{i=1}^{2n}\prt{\delta c_i(\theta) v_i(t) + c_i(t)(\theta)\delta v_i}}^2_{\fH} \mu(\d \theta).
  \end{aligned}
\end{equation}
Problem \eqref{eq:dynamics-gamma} yields a system of evolution equations for the parameters $\gamma(t)$, whose explicit form is given in \Cref{thm:dynamics-gamma}. The evolution is expressed in terms of a variable $\chi(\gamma)\in(\fH_{2n}^\perp)^{2n}$ as
\begin{equation}\label{eq:chi}
  \chi_i(\gamma)\coloneqq a_i \proj{\fH_{2n}^\perp}\left(\dot{\rw}-\proj{\Wm}\dot{\varphi}_1(\gamma)\right) + \lambda \proj{\fH_{2n}^\perp}\int_{{\fTheta}} c_i(\theta) \cJ\prt{\grad_{\fH} \cH_\theta(\varphi_2(\gamma(t))} \mu(\d \theta),
\end{equation}
defined for every $\gamma=(a,c,V)\in\widetilde{\fGamma}$ and $1\leq i\leq2n$. It also involves the shorthand notation
\begin{equation}\label{eq:GVPWV}
  \bG(V, \proj{\Wm} V)=\prt{\inner{v_i, \proj{\Wm} v_j}_{\fH} }_{1\leq i, j\leq 2n}.
\end{equation}

\begin{theorem}
  \label{thm:dynamics-gamma}
  If $\beta(\fH_{2n}(t), \Wm(t))>0$ for all $t>0$, then the minimization problem \eqref{eq:dynamics-gamma} admits a unique solution
  $(\dot{a},\dot{c},\dot{V})\in\widetilde{\tangent{\gamma}{\fGamma}}$
  given by
  \begin{subequations}
    \label{eq:dynamics-dot}
    \begin{align}
      \dot{a}(t)         & = \bG(V(t), \proj{\Wm} V(t))^{-1} \prt*{\inner{\dot{\rw}(t), V(t)}_{\fH} - \bG(V(t), \proj{\Wm}\dot{V}(t)) a(t)}, \label{eq:dynamics-dot-a}                 \\
      \dot{c}(t)(\theta) & = \inner{\cJ\prt{\grad_{\fH} \cH_\theta(\varphi_2(\gamma(t))}, V(t)}_{\fH}, \quad \forall \theta \in{\fTheta} \;\text{$\mu$-a.e.},\label{eq:dynamics-dot-c} \\
      \dot{V}(t)         & =\lambda^{-1} \bS(c)^{-1}(\chi(\gamma(t))+ \bJ_{2n}\cJ \chi(\gamma(t))), \label{eq:dynamics-dot-V}
    \end{align}
  \end{subequations}
  where $\bS(c)$, $\bG(V(t), \proj{\Wm} V(t))$, and $\chi$ are defined in \eqref{eq:S-matrix}, \eqref{eq:GVPWV}, and \eqref{eq:chi}, respectively. Equations \eqref{eq:dynamics-dot} supplemented with the initial condition $\gamma(0)$ form a system of equations for the evolution of the parameters.
\end{theorem}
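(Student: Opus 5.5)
The loss $\cL_{\gamma(t)}$ in \eqref{eq:loss} is a convex quadratic functional of $\delta\gamma$ on the closed linear space $\widetilde{\tangent{\gamma}{\fGamma}}$. The plan is therefore: show that it is strictly convex and coercive, which gives existence and uniqueness of the minimizer; then write the first-order optimality conditions and solve them block by block, in the order $\delta c$, $\delta a$, $\delta V$. In what follows we assume $\gamma\in\widetilde{\fGamma}$, i.e.\ $c\in\cC_{2n}$, which the formula \eqref{eq:dynamics-dot-V} requires anyway.

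\textbf{Step 1: existence and uniqueness.} $\cL_{\gamma}(\delta\gamma)$ has the form
\[
\tfrac12\norm{\dot\rw-\proj{\Wm}\varphi_1'}^2+\tfrac\lambda2\norm{F-\varphi_2'}_{\fL}^2,
\]
where $(\varphi_1',\varphi_2')=(D\varphi)_\gamma(\delta\gamma)$ and $F$ collects the Hamiltonian velocities $\cJ\grad_{\fH}\cH_\theta(\varphi_2(\gamma))$. Its quadratic part vanishes only if $\varphi_2'=0$ and $\proj{\Wm}\varphi_1'=0$.
\begin{itemize}
  \item From $\varphi_2'=0$, the argument of \Cref{lem:injective-decoder} gives $\delta c=0$ and $\delta V=0$.
  \item Hence $\varphi_1'=\sum_i\delta a_iv_i\in\fH_{2n}$. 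Since $\beta(\fH_{2n},\Wm)>0$, the condition $\proj{\Wm}\varphi_1'=0$ forces $\delta a=0$.
\end{itemize}
This yields uniqueness. For existence, the cleanest route is to solve the normal equations explicitly, which is done in Steps 2–4. Uniqueness of the critical point, together with convexity, then gives the unique minimizer.

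\textbf{Step 2: variation in $\delta c$.} Vary $\delta c(\theta)$ pointwise. Using $\inner{\delta v_i,v_j}=0$ (horizontality) and orthonormality of $V$, this yields \eqref{eq:dynamics-dot-c} directly.

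\textbf{Step 3: variation in $\delta a$.} The condition is $\inner{\dot\rw-\proj{\Wm}\varphi_1',\proj{\Wm}v_j}=0$. Since $\dot\rw\in\Wm$, this reads
\[
\inner{\dot\rw,V}=\bG(V,\proj{\Wm}V)\,\delta a+\bG(V,\proj{\Wm}\delta V)\,a .
\]
The matrix $\bG(V,\proj{\Wm}V)$ is the Gram matrix of $\proj{\Wm}$ restricted to $\fH_{2n}$. It is SPD because $\norm{\proj{\Wm}x}\ge\beta\norm{x}$ for $x\in\fH_{2n}$, so we can invert it, giving \eqref{eq:dynamics-dot-a}. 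The order of $V$ and $\proj{\Wm}\delta V$ in $\bG$ should be checked against self-adjointness of $\proj{\Wm}$.

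\textbf{Step 4: variation in $\delta V$ (main obstacle).} Admissible variations $\eta\in\widetilde{\tangent{V}{\cV_{2n}}}$ are constrained: $\eta_i\in\fH_{2n}^\perp$ and $\eta_i=\cJ\eta_{i+n}$. We use \Cref{lem:adjoint-like} to write the derivative as $\inner{\eta,g}_{\fH^{2n}}$, with
\[
g_i=a_i(\dot\rw-\proj{\Wm}\varphi_1')+\lambda\int c_i\,(F-\varphi_2')\,\mu(\d\theta),
\]
where we have used self-adjointness of $\proj{\Wm}$. Since $\eta\in(\fH_{2n}^\perp)^{2n}$, only $\proj{\fH_{2n}^\perp}g$ matters. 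By Step 2, $F-\varphi_2'$ projected onto $\fH_{2n}^\perp$ equals $\proj{\fH_{2n}^\perp}F-\sum_jc_j\delta v_j$. Hence
\[
\proj{\fH_{2n}^\perp}g=\chi-\lambda\,\bC(c)\,\delta V .
\]
A subtlety here is that $\chi$ in \eqref{eq:chi} contains $\dot\varphi_1$, which itself depends on $\delta V$; this must be read as part of the implicit system, consistent with how $\chi$ is defined.

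Next we characterize the orthogonal complement of the admissible set inside $(\fH_{2n}^\perp)^{2n}$. Parametrize $\eta=(\cJ\xi,\xi)$, i.e.\ $\eta=-\bJ_{2n}\cJ\eta$ up to the sign conventions. Note that $\fH_{2n}^\perp$ is $\cJ$-invariant because $V$ is orthosymplectic. With this parametrization, $\inner{\eta,k}=0$ for all admissible $\eta$ is equivalent to
\[
k+\bJ_{2n}\cJ k=0 .
\]
Applying this to $k=\chi-\lambda\bC\delta V$, and using $\delta V=\bJ_{2n}\cJ\delta V$ together with $\cJ^2=-\id$, we obtain
\[
\lambda\bigl(\bC+\bJ_{2n}^\top\bC\bJ_{2n}\bigr)\delta V=\chi+\bJ_{2n}\cJ\chi .
\]
Inverting $\bS(c)$, which is possible since $c\in\cC_{2n}$, gives \eqref{eq:dynamics-dot-V}.

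The hard part is getting the signs of the $\bJ_{2n}/\cJ$ manipulations right and justifying the projection identity for the constrained tangent space. One must also verify that the resulting $\delta V$ indeed lies in $\widetilde{\tangent{V}{\cV_{2n}}}$. I would check this by noting that $\chi+\bJ_{2n}\cJ\chi$ satisfies the compatibility relation and that $\bS(c)$ commutes appropriately with $\bJ_{2n}$, since $\bJ_{2n}^\top\bS\bJ_{2n}=\bS$.
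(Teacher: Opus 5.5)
Your Steps 2--4 follow the paper's derivation. You obtain the optimality conditions via \Cref{lem:adjoint-like} and solve them block by block. For $\delta V$ you characterize the annihilator of $\widetilde{\tangent{V}{\cV_{2n}}}$ in $(\fH_{2n}^\perp)^{2n}$ through the parametrization $\eta=(\cJ\xi,\xi)$. This is equivalent to the paper's argument that $\overline{\chi}+\bJ_{2n}\cJ\overline{\chi}$ is itself admissible. Note that admissible $\eta$ satisfy $\eta=\bJ_{2n}\cJ\eta$, not $-\bJ_{2n}\cJ\eta$, though you use the correct relation later.

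\textbf{The gap is existence.} Step 1 only shows that $\cA_\gamma\colon\delta\gamma\mapsto(\proj{\Wm}\varphi_1',\sqrt{\lambda}\,\varphi_2')$ is injective. That gives strict convexity, hence at most one minimizer. But $\fH^{2n}$ and $L^2((\fTheta,\mu);\bR^{2n})$ are infinite dimensional, so an injective operator need not have closed range, and then $\tfrac12\norm{b_\gamma-\cA_\gamma\delta\gamma}^2$ need not attain its infimum. Your substitute, getting existence by \emph{solving the normal equations explicitly} in Steps 2--4, does not work. Those steps only derive necessary conditions that any critical point must satisfy, and these conditions form a coupled implicit system, as you observe yourself: $\dot a$ involves $\bG(V,\proj{\Wm}\dot V)$, and $\chi$ in the $\dot V$ formula involves $\dot\varphi_1$, hence both $\dot a$ and $\dot V$. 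Nothing shows that this system has a solution. So \emph{uniqueness of the critical point plus convexity} has no critical point to act on.

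What is missing is a quantitative lower bound for $\cA_\gamma$, which is how the paper proceeds. Horizontality, $\inner{\delta v_i,v_j}_{\fH}=0$, together with $\delta V=\bJ_{2n}\cJ\delta V$ and the fact that $\cJ$ is an isometry, gives
\begin{equation*}
  \int_{\fTheta}\Norm{\sum_{i=1}^{2n}\prt{\delta c_i(\theta)v_i+c_i(\theta)\delta v_i}}_{\fH}^2\mu(\d\theta)
  =\norm{\delta c}_{L^2((\fTheta,\mu);\bR^{2n})}^2+\frac12\inner{\bS(c)\delta V,\delta V}_{\fH^{2n}}.
\end{equation*}
Since $\lambda_{\min}(\bS(c))>0$, this bounds $(\delta c,\delta V)$ by the second component of $\cA_\gamma\delta\gamma$. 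Then
\begin{equation*}
  \beta(\fH_{2n},\Wm)\norm{\delta a}_{\bR^{2n}}\le\norm{\proj{\Wm}\varphi_1'}_{\fH}+\norm{a}_{\bR^{2n}}\norm{\delta V}_{\fH^{2n}}
\end{equation*}
bounds $\delta a$. Together these yield $\norm{\cA_\gamma\delta\gamma}\ge\alpha_\gamma\norm{\delta\gamma}_{\fG}$ on the closed subspace $\widetilde{\tangent{\gamma}{\fGamma}}$. This gives coercivity of $\cL_\gamma$ and closed range of $\cA_\gamma$, and existence follows by weak lower semicontinuity or the projection theorem.

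Once existence is established this way, your concern about checking that the computed $\dot V$ is admissible also disappears. The minimizer lies in $\widetilde{\tangent{\gamma}{\fGamma}}$ by construction, and Steps 2--4 merely characterize it.
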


\begin{proof}
  Let $t>0$ and $\gamma\in\widetilde{\fGamma}$, and omit the dependence on time. We first prove that the minimization problem \eqref{eq:dynamics-gamma} admits a unique solution. To this end, we define the bounded linear observation operator
  \begin{align*}
    \cO:\fH\times\fL & \to \fH\times\fL,                             \\
    (x,y)            & \mapsto \prt*{\proj{\Wm}x,\sqrt{\lambda}\,y},
  \end{align*}
  and the observed decoder $\varphi^{\rm obs}\coloneqq\cO\circ\varphi$. Since $\cO$ is linear,
  $$
    (D\varphi^{\rm obs})_\gamma=\cO\circ(D\varphi)_\gamma.
  $$
  We denote the restriction of this differential to the admissible tangent space by
  \begin{align*}
    \cA_\gamma
    \coloneqq (D\varphi^{\rm obs})_\gamma\vert_{\widetilde{\tangent{\gamma}{\fGamma}}}:
    \widetilde{\tangent{\gamma}{\fGamma}} & \longrightarrow \fH\times\fL, \\
    (\delta a,\delta c,\delta V)
                                          & \longmapsto
    \prt*{
      \proj{\Wm}\sum_{i=1}^{2n}\prt{\delta a_i v_i+a_i\delta v_i},
      \sqrt{\lambda}\sum_{i=1}^{2n}\prt{\delta c_i v_i+c_i\delta v_i}
    }.
  \end{align*}
  With
  $$
    b_\gamma
    \coloneqq\prt*{
      \dot{\rw},
      \sqrt{\lambda}\,\cJ\prt{\grad_{\fH}\cH_\theta(\varphi_2(\gamma))}
    }\in\fH\times\fL,
  $$
  the loss in \eqref{eq:loss} can be written as
  \begin{equation}\label{eq:loss-operator}
    \cL_\gamma(\delta\gamma)
    =\frac 1 2 \norm{b_\gamma-\cA_\gamma(\delta\gamma)}_{\fH\times\fL}^2.
  \end{equation}

  We next establish the properties that guarantee existence and uniqueness of the minimizer of $\cL_\gamma$. First, for fixed $\gamma$, the admissible space $\widetilde{\tangent{\gamma}{\fGamma}}$ is a closed linear subspace of $\fG$ (we omit the proof of this statement). We next show that the linear operator $\cA_\gamma$ is injective. For every $\delta\gamma=(\delta a,\delta c,\delta V)\in\widetilde{\tangent{\gamma}{\fGamma}}$, the orthogonality of $\delta V$ to $V$ and the definition of $\bS(c)$ yield
  \begin{equation}
    \int_{\fTheta}\norm*{\sum_{i=1}^{2n}\prt{\delta c_i(\theta)v_i+c_i(\theta)\delta v_i}}_{\fH}^2\mu(\d\theta)=\norm{\delta c}_{L^2((\fTheta,\mu);\bR^{2n})}^2
    +\frac12\inner{\bS(c)\delta V,\delta V}_{\fH^{2n}}.
    \label{eq:coercivity-second-component}
  \end{equation}
  Since $c\in\cC_{2n}$, the matrix $\bS(c)$ is positive definite. Setting
  $$
    \kappa_c
    \coloneqq\sqrt{\lambda\min\left\{1,\frac12\lambda_{\min}(\bS(c))\right\}}>0,
  $$
  equation \eqref{eq:coercivity-second-component} and the definition of $\cA_\gamma$ imply
  \begin{equation}\label{eq:control-c-V}
    \prt*{\norm{\delta c}_{L^2((\fTheta,\mu);\bR^{2n})}^2
    +\norm{\delta V}_{\fH^{2n}}^2}^{1/2}
    \leq\kappa_c^{-1}\norm{\cA_\gamma(\delta\gamma)}_{\fH\times\fL}.
  \end{equation}
  Furthermore
  \begin{align*}
    \beta(\fH_{2n},\Wm)\norm{\delta a}_{\bR^{2n}}
     & \leq\norm*{\proj{\Wm}\sum_{i=1}^{2n}\delta a_i v_i}_{\fH}                     \\
     & \leq\norm*{\proj{\Wm}\sum_{i=1}^{2n}\prt{\delta a_i v_i+a_i\delta v_i}}_{\fH}
    +\norm{a}_{\bR^{2n}}\norm{\delta V}_{\fH^{2n}},
  \end{align*}
  and combining this estimate with \eqref{eq:control-c-V} gives
  $$
    \norm{\delta a}_{\bR^{2n}}
    \leq\beta(\fH_{2n},\Wm)^{-1}
    \prt*{1+\norm{a}_{\bR^{2n}}\kappa_c^{-1}}
    \norm{\cA_\gamma(\delta\gamma)}_{\fH\times\fL}.
  $$
  Consequently, there exists $\alpha_\gamma:=\kappa_c\left(1+\left(\frac{\kappa_c+\norm{a}_{\Rbb^{2n}}}{\beta(\fH_{2n},\Wm)}\right)^2\right)^{-1/2}>0$ such that
  \begin{equation}\label{eq:observed-decoder-lower-bound}
    \norm{\cA_\gamma(\delta\gamma)}_{\fH\times\fL}
    \geq\alpha_\gamma\norm{\delta\gamma}_{\fG},
    \qquad
    \forall\delta\gamma\in\widetilde{\tangent{\gamma}{\fGamma}}.
  \end{equation}
  In particular, $\cA_\gamma$ is injective, consistently with \Cref{lem:injective-decoder}: the vanishing of the second component of $\cA_\gamma(\delta\gamma)$ implies $\delta c=0$ and $\delta V=0$ by the argument in the proof of that lemma, while the vanishing of the first component and  the fact that  $\beta(\fH_{2n},\Wm)>0$ imply $\delta a=0$.

  We can now prove strong convexity and coercivity of $\cL_\gamma$. Since \eqref{eq:loss-operator} is a quadratic least-squares functional, its second differential satisfies
  $$
    (D^2\cL_\gamma)_{\delta\gamma}(\eta,\eta)
    =\norm{\cA_\gamma(\eta)}_{\fH\times\fL}^2
    \geq\alpha_\gamma^2\norm{\eta}_{\fG}^2,
    \qquad
    \forall\delta\gamma,\eta\in\widetilde{\tangent{\gamma}{\fGamma}},
  $$
  where we used \eqref{eq:observed-decoder-lower-bound}. Thus $\cL_\gamma$ is strongly convex on $\widetilde{\tangent{\gamma}{\fGamma}}$, which implies that it has at most one minimizer. Furthermore, \eqref{eq:loss-operator}, \eqref{eq:observed-decoder-lower-bound}, and Young's inequality give
  $$
    \cL_\gamma(\delta\gamma)
    \geq\frac14\norm{\cA_\gamma(\delta\gamma)}_{\fH\times\fL}^2
    -\frac12\norm{b_\gamma}_{\fH\times\fL}^2
    \geq\frac{\alpha_\gamma^2}{4}\norm{\delta\gamma}_{\fG}^2
    -\frac12\norm{b_\gamma}_{\fH\times\fL}^2.
  $$
  Therefore $\cL_\gamma$ is coercive. If $(\delta\gamma_k)_{k\geq1}$ is a minimizing sequence, coercivity implies that it is bounded in $\fG$. Since $\widetilde{\tangent{\gamma}{\fGamma}}$ is a closed linear subspace of the Hilbert space $\fG$, a subsequence converges weakly to some $\delta\gamma_\star\in\widetilde{\tangent{\gamma}{\fGamma}}$. The functional $\cL_\gamma$ is weakly lower semicontinuous, so $\delta\gamma_\star$ is a minimizer. Strong convexity shows that this minimizer is unique. Hence the minimization problem \eqref{eq:dynamics-gamma} has a unique solution.

  Since $\cL_{\gamma}$ is differentiable, the unique minimizer $\dot{\gamma}\in\widetilde{\tangent{\gamma}{\fGamma}}$ satisfies $(D\cL_\gamma)_{\dot{\gamma}}(\delta \gamma)=0$ for all $\delta \gamma\in \widetilde{\tangent{\gamma}{\fGamma}}$. Let $(h,\ell)\in\fH\times\fL$ be defined as
  \begin{align*}
    h
     & =  \dot{\rw} - \proj{\Wm}(\dot{\varphi}_1(\gamma(t)))  \in \fH,                                                                            \\
    \ell(\theta)
     & = \lambda \Big(\cJ\prt{\grad_{\fH} \cH_\theta(\varphi_2(\gamma(t))} -\dot{\varphi}_2(\gamma(t))\Big) \in \fH \quad\forall\theta\in\fTheta.
  \end{align*}
  We can express the differential as
  \begin{equation*}
    (D\cL_{\gamma})_{\dot{\gamma}}(\delta \gamma)
    = \inner*{(h, \ell), (D\varphi)_{\gamma(t)}(\delta \gamma)}_{\fH\times \fL}
    = \inner{T(h,\ell), \delta \gamma}_{\fG}, \quad \forall \delta \gamma \in \widetilde{\tangent{\gamma}{\fGamma}},
  \end{equation*}
  where we have used \Cref{lem:adjoint-like} to derive the last equality. The optimality condition is then
  $$
    \inner{T(h,\ell), \delta \gamma}_{\fG} = 0,\quad \forall \delta \gamma \in \widetilde{\tangent{\gamma}{\fGamma}}.
  $$
  Using the expression of $T(h,\ell)$ from \Cref{lem:adjoint-like} yields
  \begin{align*}
    0
     & = \inner{T(h,\ell), \delta \gamma}_{\fG}                                                                                                                         \\
     & = \inner{\inner{h, V}_{\fH},\delta a}_{\bR^{2n}}				+ \sum_{i=1}^{2n} \prt*{\int_{{\fTheta}} \delta c_i(\theta)\inner{\ell(\theta), v_i}_{\fH}\,  \mu(\d \theta)
      + \inner*{ a_i h + \int_{{\fTheta}} c_i(\theta)\ell(\theta) \mu(\d \theta) , \delta v_i }_{\fH}}
  \end{align*}
  for all $\delta \gamma = ( \delta a, \delta c, \delta V) \in \widetilde{\tangent{\gamma}{\fGamma}}$. This leads to the following system of equations for $\dot{\gamma}$: for any $1\leq i\leq 2n$,
  \begin{align}
    0= & \inner{h, v_i}_{\fH} =\inner*{\dot{\rw},v_i}_{\fH} -\sum_{j=1}^{2n}\inner*{\dot{a}_j \proj{\Wm}v_j + a_j\proj{\Wm}\dot{v}_j, v_i}_{\fH},\label{eq:opt-1}                                                                                                                         \\
    0= & \inner{\ell(\theta), v_i}_{\fH}	=\lambda \inner*{\cJ\prt{\grad_{\fH} \cH_\theta(\varphi_2(\gamma(t))},v_i}_{\fH}-\lambda\sum_{j=1}^{2n}\inner*{\dot{c}_j(\theta) v_j + c_j(\theta)\dot{v}_j, v_i }_{\fH}, \quad \forall \theta \in{\fTheta} \;\text{$\mu$-a.e.},\label{eq:opt-2} \\
    0= & \sum_{i=1}^{2n}\inner*{ a_i h + \int_{{\fTheta}} c_i(\theta)\ell(\theta) \mu(\d \theta) , \delta v_i }_{\fH}, \qquad \forall \delta V\in\widetilde{T_{V} \cV_{2n}}. \label{eq:opt-3}
  \end{align}
  Here we made all dependencies on time implicit for ease for notation.

  From \eqref{eq:opt-1} we obtain the following equation for $\dot{a} \in \bR^{2n}$:
  \begin{align*}
    \bG(V(t), \proj{\Wm} V(t)) \dot{a}(t) = \inner{\dot{\rw}(t), V}_{\fH} - \bG(V(t), \proj{\Wm}\dot{V}(t)) a(t).
  \end{align*}
  The matrix $\bG(V, \proj{\Wm} V)$ is invertible provided $\beta(\fH_{2n}, \Wm)>0$. Since this is true by assumption, we obtain \cref{eq:dynamics-dot-a}. Next, from \eqref{eq:opt-2}, using that $\inner{v_i, \dot{v}_j}_{\fH}=0$ since $\delta V \in \widetilde{\tangent{V}{\cV_{2n}}}$, we get
  \begin{align}
    \dot{c}_i(\theta) = \inner{\cJ\prt{\grad_{\fH} \cH_\theta(\varphi_2(\gamma(t))}, v_i}_{\fH}, \quad \forall \theta \in{\fTheta} \;\; \text{$\mu$-a.e.},
  \end{align}
  which is \cref{eq:dynamics-dot-c}.

  Finally, from \eqref{eq:opt-3} we obtain an equation for $\dot{v}_i$ as follows. By rearranging terms in \cref{eq:opt-3}, we can express this condition as
  \begin{align*}
    0 & = \sum_{i=1}^{2n}\inner*{a_i h + \lambda \int_{{\fTheta}} c_i(\theta) \cJ\prt{\grad_{\fH} \cH_\theta(\varphi_2(\gamma(t))} \mu(\d \theta) - \lambda \sum_{j=1}^{2n} \bC_{i,j}(c) \dot{v}_j, \delta v_i  }_{\fH}, \quad 1\leq i \leq 2n.
  \end{align*}
  By expanding the definition of $h$ and using the fact that $\delta v_i,\dot{v}_i\in\fH_{2n}^\perp$ for all $1\leq i\leq2n$, we can write the above equation as
  \begin{equation*}
    0
    =\sum_{i=1}^{2n}\inner*{\overline{\chi}_i,\delta v_i}_{\fH} \qquad \forall \delta V\in\widetilde{T_{V} \cV_{2n}}, \quad\mbox{with}\quad \overline{\chi}_i:=\chi_i-\lambda\sum_{j=1}^{2n}\bC_{i,j}(c)\dot{v}_j,
  \end{equation*}
  where $\chi_i$ is defined as in \eqref{eq:chi}.
  We then observe that, for any $\delta V\in\widetilde{T_{V} \cV_{2n}}$, it holds
  \begin{equation}\label{eq:opt-5}
    \begin{aligned}
      0=\sum_{i=1}^{2n} \inner{\overline{\chi}_i,\delta v_i}
       & =\dfrac12 \sum_{i=1}^{n}\big( \inner{\overline{\chi}_i,\delta v_i + \cJ \delta v_{i+n}}_{\fH}+\inner{\overline{\chi}_{i+n},\delta v_{i+n}-\cJ \delta v_i}_{\fH}\big) \\
       & = \dfrac12 \sum_{i=1}^{2n}\inner*{\overline{\chi}_i+\sum_{j=1}^{2n} (\bJ_{2n})_{i,j}\cJ \overline{\chi}_j,\delta v_i}_{\fH}.
    \end{aligned}
  \end{equation}
  Let us introduce the quantity
  $$\sigma_i:=\overline{\chi}_i+\sum_{j=1}^{2n} (\bJ_{2n})_{i,j}\cJ \overline{\chi}_j,\qquad \forall\, 1\leq i\leq 2n.$$
  Since $\overline{\chi}_i$ belongs to $\Vtwon^{\perp}$, it follows that $\sigma_i\in \Vtwon^{\perp}$.
  Moreover, $\sigma_i=\cJ \sigma_{i+n}$ for any $1\leq i\leq n$. Indeed,
  \begin{equation*}
    \sigma_i=\cJ\bigg(-\cJ \overline{\chi}_i+\sum_{j=1}^{2n}(\bJ_{2n})_{i,j}\overline{\chi}_j\bigg)
    =\cJ(\overline{\chi}_{i+n}-\cJ\overline{\chi}_i)
    =\cJ\bigg(\overline{\chi}_{i+n}+\sum_{j=1}^{2n}(\bJ_{2n})_{i+n,j}\cJ\overline{\chi}_j\bigg)=\cJ \sigma_{i+n}.
  \end{equation*}
  This implies that $(\sigma_1,\dots,\sigma_{2n})$ belongs to $\widetilde{T_{V} \cV_{2n}}$.
  From \eqref{eq:opt-5} we can thus conclude that $\sigma_i=0$ for any $1\leq i\leq 2n$, which gives
  \begin{equation*}
    \bS(c) \dot{V} =\lambda^{-1} (\chi+ \bJ_{2n}\cJ \chi).
  \end{equation*}
  Since $c\in \cC_{2n}$, then $\bS(c)$ is invertible. This yields \cref{eq:dynamics-dot-V} and concludes the proof.
\end{proof}

We next show that the evolution equations \eqref{eq:dynamics-dot} preserve the orthosymplecticity of the basis $V(t)$.

\begin{lemma}
  If $V(0)\in\cV_{2n}$, then $V(t)\in\cV_{2n}$ for all $t>0$.
\end{lemma}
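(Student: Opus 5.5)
The plan is to show that the Gram-type matrices $\bM(t)\coloneqq\prt{\inner{v_i(t),v_j(t)}_{\fH}}_{i,j}$ and $\bOmega(t)\coloneqq\prt{\omega(v_i(t),v_j(t))}_{i,j}$ are constant in time. Once we know $\dot{\bM}\equiv 0$ and $\dot{\bOmega}\equiv 0$, the initial conditions $\bM(0)=\bI_{2n}$ and $\bOmega(0)=\bJ_{2n}$ (since $V(0)\in\cV_{2n}$) give $V(t)\in\cV_{2n}$ for all $t>0$.

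By the product rule,
$$\dot{\bM}_{i,j}=\inner{\dot v_i,v_j}_{\fH}+\inner{v_i,\dot v_j}_{\fH},\qquad \dot{\bOmega}_{i,j}=\omega(\dot v_i,v_j)+\omega(v_i,\dot v_j).$$
Thus it suffices to check that $\dot V(t)$, given by \eqref{eq:dynamics-dot-V}, belongs to $\widetilde{\tangent{V(t)}{\cV_{2n}}}$, and that this set is contained in $\tangent{V}{\cV_{2n}}$.

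\textbf{Orthogonality.} Each $\chi_i(\gamma)$ lies in $\fH_{2n}^\perp$ by \eqref{eq:chi}. The space $\fH_{2n}=\vspan\{V\}$ is $\cJ$-invariant because $V$ is orthosymplectic: from $\omega(v_i,v_j)=(\bJ_{2n})_{i,j}$ and orthonormality one gets $\cJ v_i=\sum_j(\bJ_{2n})_{i,j}v_j$. Indeed, $\norm{\cJ v_i}_{\fH}=1$ and its coefficients on $V$ already have unit norm. Since $\cJ$ is an isometry, $\cJ$ preserves $\fH_{2n}^\perp$ as well. Hence $\chi+\bJ_{2n}\cJ\chi\in(\fH_{2n}^\perp)^{2n}$, and applying the matrix $\bS(c)^{-1}$ keeps each component in $\fH_{2n}^\perp$. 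This gives $\inner{\dot v_i,v_j}_{\fH}=0$ for all $i,j$.

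\textbf{Gauge relation $\dot v_i=\cJ\dot v_{i+n}$.} The proof of \Cref{thm:dynamics-gamma} already shows that $\sigma=\chi+\bJ_{2n}\cJ\chi$ satisfies $\sigma_i=\cJ\sigma_{i+n}$, i.e.\ $\sigma=\bJ_{2n}\cJ\sigma$ componentwise, using $\cJ^2=-\id$. It remains to check that $\bS(c)^{-1}$ preserves this relation, which means showing that $\bS(c)$ commutes with $\bJ_{2n}$. From $\bJ_{2n}^\top=-\bJ_{2n}=\bJ_{2n}^{-1}$ we get
$$\bJ_{2n}^\top\bS\bJ_{2n}=\bJ_{2n}^\top\bC\bJ_{2n}+\bJ_{2n}^\top\bJ_{2n}^\top\bC\bJ_{2n}\bJ_{2n}=\bJ_{2n}^\top\bC\bJ_{2n}+\bC=\bS,$$
so $\bS\bJ_{2n}=\bJ_{2n}\bS$, and hence $\bS^{-1}$ commutes with $\bJ_{2n}$ too. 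Since $\cJ$ acts on functions and matrices act on indices, the two operations commute. Therefore $\dot V=\lambda^{-1}\bS^{-1}\sigma$ satisfies $\bJ_{2n}\cJ\dot V=\dot V$. I expect this commutation step to be the main technical point.

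\textbf{Conclusion.} For $\delta V\in\widetilde{\tangent{V}{\cV_{2n}}}$, orthogonality to $V$ gives $\inner{\delta v_i,v_j}_{\fH}+\inner{v_i,\delta v_j}_{\fH}=0$. For the symplectic part, $\omega(\delta v_i,v_j)=\inner{\cJ\delta v_i,v_j}_{\fH}$, and $\cJ\delta v_i\in\fH_{2n}^\perp$ by the $\cJ$-invariance above, so this term vanishes. The term $\omega(v_i,\delta v_j)$ vanishes likewise. Hence $\widetilde{\tangent{V}{\cV_{2n}}}\subset\tangent{V}{\cV_{2n}}$, so $\dot{\bM}=0$ and $\dot{\bOmega}=0$ hold pointwise in time.

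The argument above works at points where $V(t)\in\cV_{2n}$. To avoid circularity, the plan is to use a uniqueness argument for the ODE system. The cleaner route is to note that both constraints $\inner{v_i,v_j}_{\fH}=\delta_{i,j}$ and $\omega(v_i,v_j)=(\bJ_{2n})_{i,j}$ are used only through the structure of $\fH_{2n}^\perp$. So one can show directly that $\dot{\bM}$ and $\dot{\bOmega}$ vanish whenever $\dot V\perp\vspan V$ and $\dot V$ is $\cJ$-compatible, and then conclude by continuity and uniqueness of solutions.
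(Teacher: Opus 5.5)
Your skeleton (differentiate $\inner{v_i,v_j}_{\fH}$ and $\omega(v_i,v_j)$ and use that $\dot V$ is horizontal) is the paper's proof. As written, however, your argument is circular, and the last paragraph does not close it.

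\textbf{The gap.} Two of your steps use $\cJ$-invariance of $\fH_{2n}(t)$, that is, they assume $V(t)\in\cV_{2n}$:
\begin{itemize}
\item your orthogonality step, which needs $\cJ\chi_j\in\fH_{2n}^\perp$;
\item your proof of $\widetilde{\tangent{V}{\cV_{2n}}}\subset\tangent{V}{\cV_{2n}}$.
\end{itemize}
Moreover, \eqref{eq:dynamics-dot-V} was only derived for $\gamma\in\widetilde{\fGamma}$. The step $\sigma_i\in\fH_{2n}^\perp$ in the proof of \Cref{thm:dynamics-gamma} relies on the same invariance.

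So you have only shown that the constraint Gram matrices have zero derivative at times where the constraints already hold. That alone does not give invariance; compare $\dot x=\sqrt{\lvert x\rvert}$ with $x(0)=0$. Appealing to ``continuity and uniqueness of solutions'' would require a well-posed ODE on a neighbourhood of $\cV_{2n}$, with a Lipschitz right-hand side defined away from $\cV_{2n}$. Your proposal does not set this up, and the paper does not provide it.

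\textbf{The fix needs no ODE theory.}
\begin{itemize}
\item Take horizontality from the definition \eqref{eq:dynamics-gamma}, not from the explicit formula. By construction $\dot\gamma(t)$ is a minimizer over $\widetilde{\tangent{\gamma(t)}{\fGamma}}$. The two conditions in \eqref{eq:tangent-HV} make sense for any $V\in\fH^{2n}$, so $\inner{\dot v_i,v_j}_{\fH}=0$ and $\dot v_i=\cJ\dot v_{i+n}$ hold at every $t$.
\item Replace $\cJ$-invariance of $\fH_{2n}$ by the gauge relation. For $1\leq i\leq n$ it gives $\cJ\dot v_i=-\dot v_{i+n}$ and $\cJ\dot v_{i+n}=\dot v_i$. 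Hence each $\cJ\dot v_k$ equals $\pm$ some $\dot v_l$, and is therefore orthogonal to every $v_j$.
\end{itemize}
It follows that $\frac{\d}{\dt}\inner{v_i,v_j}_{\fH}=0$ and $\frac{\d}{\dt}\inner{\cJ v_i,v_j}_{\fH}=\inner{\cJ\dot v_i,v_j}_{\fH}-\inner{v_i,\cJ\dot v_j}_{\fH}=0$ for all $t$. This uses no assumption on $V(t)$. Integrating from $t=0$ gives the lemma, and this is exactly the paper's argument.

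Your check that $\bS(c)$ commutes with $\bJ_{2n}$, so that \eqref{eq:dynamics-dot-V} respects the gauge, is correct but not needed for this lemma.
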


\begin{proof}
  Since $\dot{V}\in\widetilde{T_{V} \cV_{2n}}$, then
  \begin{equation*}
    \frac{d}{dt}\inner{v_i(t),v_j(t)}_{\fH} = \inner{\dot{v}_i(t), v_j(t)}_{\fH} + \inner{v_i(t),\dot{v}_j(t)}_{\fH} = 0, \qquad 1\leq i,j\leq 2n
  \end{equation*}
  and
  \begin{equation*}
    \frac{d}{dt}\inner{\cJ v_i(t),v_j(t)}_{\fH} = \inner{\cJ\dot{v}_i(t),v_j(t)}_{\fH} + \inner{\cJ v_i(t),\dot{v}_j(t)}_{\fH} = 0, \qquad 1\leq i,j\leq2n.
  \end{equation*}
  Therefore, if $V(0)\in\cV_{2n}$,
  \begin{equation*}
    \inner{v_i(t),v_j(t)}_{\fH} = \inner{v_i(0),v_j(0)}_{\fH} = \delta_{i,j} \quad \text{ and } \quad \inner{\cJ v_i(t),v_j(t)}_{\fH}=\inner{\cJ v_i(0),v_j(0)}_{\fH}=(\bJ_{2n})_{i,j},
  \end{equation*}
  which means that $V(t)\in\cV_{2n}$ for all $t>0$.
\end{proof}

We finish this section by explaining how to appropriately choose the initial condition $\gamma(0)$ for the evolution equations \eqref{eq:adot-1}. At $t=0$, the approximation space $\fH_{2n}(0)$ is spanned by the orthosymplectic basis $V(0)$ satisfying
\begin{equation}\label{eq:V0}
  V(0)\in \argmin_{V\in\cV_{2n}}\int_{\fTheta}\norm{u(\theta)(0,\cdot)-\proj{\vspan\{V\}}(u(\theta)(0,\cdot))}^2_{\fH}\,\mu(\d\theta),
\end{equation}
where $\proj{\vspan\{V\}}$ denotes the $\fH$-orthogonal projection onto $\vspan\set{V}$. This problem can be solved using an infinite-dimensional analog of the SVD (see e.g. \cite{stewart93}, \cite[Section 6.4]{quarteroni16}).
In our numerical experiments, we select  a sample $\fTheta_h \subset \fTheta$ and construct $V(0)$ from the corresponding initial conditions of the PDE using a symplecticity-preserving SVD algorithm (see e.g. \cite{PM16}).

Given $\fH_{2n}(0)$ and $\rw(0)=\proj{W_m(0)}(u^{\dagger}(0))$,
the vector $a(0)\in\bR^{2n}$ is given by
\begin{equation}\label{eq:a0}
  a_i(0)=\inner{r(0,\cdot),v_i(0)}_{\fH}\quad\forall\,1\leq i\leq 2n,
\end{equation}
where $r(0)$ solves the least-squares problem \eqref{eq:r0} of \Cref{sec:rec-strategy}.
Finally, $c(0)\in L^2((\fTheta,\mu);\bR^{2n})$ contains the expansion coefficients
\begin{equation}\label{eq:c0}
  c_i(0)(\theta)=\inner{u(\theta)(0,\cdot),v_i(0)}_{\fH}\quad\forall\,1\leq i\leq 2n,\;\forall \theta \in \fTheta.
\end{equation}
In the numerical experiments, we ensure that $c(0)$ belongs to $\cC_{2n}$ by choosing a sufficiently large number of sample parameters in $\fTheta_h$ compared to the dimension $2n$ of the reduced basis (see \Cref{app:discrete} for details).

\subsection{The case of a symplectic observation space}

The evolution equations provided by \Cref{thm:dynamics-gamma} involve an implicit dependence on $\dot{a}$ and $\dot{V}$.
In this section we derive an equivalent system of evolution equations that are explicit in $(\dot{a},\dot{c},\dot{V})$ under the assumption that not only the reduced space, but also the observation space is a symplectic vector space. Since a finite-dimensional symplectic vector space is necessarily even-dimensional, we will denote the observation space as $\Wtwom$.

Under this additional assumption, we obtain the following result, whose proof is presented in \Cref{app:proof-gamma-dyn}.
\begin{theorem}\label{thm:dynamics-gamma-2}
  Assume that the observation space $\Wtwom\subset\fH$ is symplectic. Let $\gamma=(a,c,V)\in\widetilde{\fGamma}$. We define, for any $1\leq i\leq 2n$,
  \begin{equation}\label{eq:f}
    f_i(\gamma) := \proj{\Vtwon^\perp}\int_{\fTheta} \Big(c_i(\theta)\cJ\prt{\grad_{\fH} \cH_\theta(\varphi_2(\gamma(t))} - (\bJ_{2n}c(\theta))_i\prt{\grad_{\fH} \cH_\theta(\varphi_2(\gamma(t))}\Big)\,\mu(d\theta) \in \fH.
  \end{equation}
  If $\beta(\Vtwon(t), \Wtwom(t))>0$ for all $t>0$, then the solution $(\dot{a},\dot{c},\dot{V})\in\widetilde{\tangent{\gamma}{\fGamma}}$ to the minimization problem \eqref{eq:dynamics-gamma} is given by
  \begin{subequations}
    \label{eq:dynamics-dot-2}
    \begin{align}
      \dot{a}(t)         & = \bG(V(t), \proj{\Wtwom} V(t))^{-1} \inner{\dot{u}^\dagger-a^\top \bS^{-1}(c)f(\gamma), \proj{\Wtwom}V(t)}_{\fH} \label{eq:dynamics-dot-a-2},                                                   \\
      \dot{c}(t)(\theta) & = \inner{\cJ\prt{\grad_{\fH} \cH_\theta(\varphi_2(\gamma(t))}, V(t)}_{\fH}, \quad \forall \theta \in{\fTheta} \;\text{$\mu$-a.e.},\label{eq:dynamics-dot-c-2}                                    \\
      \dot{V}(t)         & =\bS^{-1}(c)\Big(f(\gamma) + (a+ \bJ_{2n}a\cJ) \proj{\Wtwom\cap\Vtwon^\perp}(\dot{u}^\dagger-a^\top \bS^{-1}(c)f(\gamma))(\lambda + a^\top \bS^{-1}(c) a)^{-1}\Big), \label{eq:dynamics-dot-V-2}
    \end{align}
  \end{subequations}
  where $\bS(c)$ and $\bG(V(t), \proj{\Wm} V(t))$ are defined in \eqref{eq:S-matrix} and \eqref{eq:GVPWV}, respectively.
\end{theorem}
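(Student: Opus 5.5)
The plan is to start from the optimality system of \Cref{thm:dynamics-gamma}, whose existence and uniqueness part carries over verbatim since only $\beta(\Vtwon,\Wtwom)>0$ and $c\in\cC_{2n}$ were used, and then resolve the implicit coupling between $\dot a$ and $\dot V$ using the extra structure $\cJ\Wtwom=\Wtwom$. Equation \eqref{eq:dynamics-dot-c-2} is identical to \eqref{eq:dynamics-dot-c}, so nothing is to be done there. For the remaining two equations I first rewrite $\chi$ from \eqref{eq:chi}. Because the observation space is symplectic, $\proj{\Wtwom}$ commutes with $\cJ$: $\cJ$ is an isometry with $\cJ^2=-\id$, so $\cJ$ maps $\Wtwom$ and $\Wtwom^\perp$ onto themselves. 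The same holds for $\Vtwon$, since it is spanned by an orthosymplectic basis and satisfies $\cJ V=\bJ_{2n}V$. Hence $\cJ$ also commutes with $\proj{\Vtwon^\perp}$. With the Hamiltonian term in $\chi$, the expression $\chi+\bJ_{2n}\cJ\chi$ produces $\int c_i\cJ g_\theta+\sum_j(\bJ_{2n})_{ij}c_j\cJ^2 g_\theta=\int (c_i\cJ g_\theta-(\bJ_{2n}c)_i g_\theta)$, where $g_\theta=\grad_{\fH}\cH_\theta(\varphi_2(\gamma))$. This is exactly $f_i(\gamma)$ in \eqref{eq:f}, and the factor $\lambda$ cancels the $\lambda^{-1}$.

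The data term contributes $\lambda^{-1}(a+\bJ_{2n}a\,\cJ)\,\proj{\Vtwon^\perp}h$, with $h=\dot{\rw}-\proj{\Wtwom}\dot\varphi_1$. I will show that $\proj{\Vtwon^\perp}h$ in fact lies in $\Wtwom\cap\Vtwon^\perp$, or at least that only this component survives, using $\dot{\rw}=\proj{\Wtwom}\dot u^\dagger$. This gives the ansatz
\[
\dot V=\bS^{-1}(c)\bigl(f+\lambda^{-1}(a+\bJ_{2n}a\cJ)\,y\bigr),\qquad y:=\proj{\Vtwon^\perp}\bigl(\dot{\rw}-\proj{\Wtwom}\dot\varphi_1\bigr),
\]
where $y$ itself depends on $\dot V$ through $\dot\varphi_1=\sum_i(\dot a_iv_i+a_i\dot v_i)$. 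The next step is to substitute $\dot V$ into the definition of $y$. The $\dot a$-part projects as $\proj{\Vtwon^\perp}\proj{\Wtwom}V\dot a$, and I handle it by eliminating $\dot a$ through \eqref{eq:dynamics-dot-a}, or by noting that the $\Vtwon$-components are fixed by the normal equations. The term $\sum_i a_i\dot v_i$ equals $a^\top\bS^{-1}f+\lambda^{-1}(a^\top\bS^{-1}a)\,y+\lambda^{-1}(a^\top\bS^{-1}\bJ_{2n}a)\,\cJ y$.

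The key structural fact I expect is $a^\top\bS^{-1}(c)\bJ_{2n}a=0$. It holds because $\bS$ commutes with $\bJ_{2n}$ (from $\bJ_{2n}^\top\bJ_{2n}=\bI$ and $\bJ_{2n}^2=-\bI$), so $\bS^{-1}\bJ_{2n}$ is skew. This leaves a scalar linear equation, $y(1+\lambda^{-1}a^\top\bS^{-1}a)=\proj{\Wtwom\cap\Vtwon^\perp}(\dot u^\dagger-a^\top\bS^{-1}f)$. Solving it yields the factor $(\lambda+a^\top\bS^{-1}a)^{-1}$ in \eqref{eq:dynamics-dot-V-2}. Plugging $\sum a_i\dot v_i$ back into \eqref{eq:dynamics-dot-a} gives \eqref{eq:dynamics-dot-a-2}. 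In that step the $y$ and $\cJ y$ contributions drop out after pairing with $\proj{\Wtwom}V$, because they are orthogonal to $\Vtwon$ and lie in $\Wtwom$.

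The main obstacle is the projection bookkeeping. Specifically, I must justify that $\proj{\Vtwon^\perp}\proj{\Wtwom}$ restricted to the relevant vectors coincides with $\proj{\Wtwom\cap\Vtwon^\perp}$, and that the $\dot a$-dependent term does not contaminate $y$. I would first try to prove that $\proj{\Wtwom}$ and $\proj{\Vtwon}$ interact well enough that $\proj{\Vtwon^\perp}\proj{\Wtwom}\proj{\Vtwon}$ is absorbed by the $\dot a$ normal equations. If that fails, I would decompose $\Wtwom=(\Wtwom\cap\Vtwon^\perp)\oplus$ its complement and check the two components of the optimality conditions separately.
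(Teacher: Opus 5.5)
Your route is correct, and it differs from the paper's. The paper eliminates $\dot a$ using \eqref{eq:dynamics-dot-a} and \Cref{lem:PWVperp}, which rewrites $\chi$ in terms of $\proj{\Wtwom\cap\Vtwon^\perp}\dot V$. It then invokes \Cref{lem:Jcommutes_2} (this is where symplecticity of $\Wtwom$ enters) to get the matrix system $\bS(c)\dot V+\lambda^{-1}\bM(a)\proj{\Wtwom\cap\Vtwon^\perp}\dot V=f+g$, with the rank-two matrix $\bM(a)=aa^\top+\bJ_{2n}aa^\top\bJ_{2n}^\top$. Finally it splits $\dot V$ along $\Wtwom\cap\Vtwon^\perp$ and its complement, and inverts $\bS(c)+\lambda^{-1}\bM(a)$ with the Woodbury identity.

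You instead notice that the observations enter \eqref{eq:dynamics-dot-V} only through one vector $y\in\fH$ and its image $\cJ y$. The implicit coupling therefore collapses to a scalar linear equation for $y$. The identity $a^\top\bS(c)^{-1}\bJ_{2n}a=0$, which the paper also needs, is what removes the $\cJ y$ feedback. This bypasses \Cref{lem:PWVperp}, \Cref{lem:Jcommutes_2} and Woodbury entirely.

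It also shows that $\cJ$-invariance of $\Wtwom$ is never actually used. Since the coefficient of $\cJ y$ in $a^\top\dot V$ vanishes, your appeal to $\cJ y\in\Wtwom$ in the $\dot a$ step is unnecessary. The formulas hold for any observation space with $\beta(\Vtwon,\Wm)>0$. This matters because your first paragraph's justification does not hold in general: nondegeneracy of $\omega$ on $\Wtwom$ alone does not imply $\cJ\Wtwom=\Wtwom$. Only the $\cJ$-invariance of $\Vtwon$, which you use to produce $f$, is needed.

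The step you flag as the main obstacle takes one line. The operator identity $\proj{\Vtwon^\perp}\proj{\Wtwom}=\proj{\Wtwom\cap\Vtwon^\perp}$ you mention is false in general and not needed.

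\begin{itemize}
\item The first-order condition \eqref{eq:opt-1} says exactly that $h=\dot{\rw}-\proj{\Wtwom}\dot\varphi_1$ is orthogonal to $\Vtwon$. Also $h\in\Wtwom$ by construction, so $y=\proj{\Vtwon^\perp}h=h\in\Wtwom\cap\Vtwon^\perp$.
\item Since $\proj{\Wtwom\cap\Vtwon^\perp}\proj{\Wtwom}=\proj{\Wtwom\cap\Vtwon^\perp}$, and this projector annihilates $\sum_i\dot a_iv_i\in\Vtwon$, you get $y=\proj{\Wtwom\cap\Vtwon^\perp}(\dot u^\dagger-a^\top\dot V)$ without eliminating $\dot a$. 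Eliminating it through \eqref{eq:dynamics-dot-a} instead would amount to reproving \Cref{lem:PWVperp}.
\item The same membership gives $\proj{\Wtwom\cap\Vtwon^\perp}y=y$ in your scalar equation, and $\inner{y,\proj{\Wtwom}v_j}_{\fH}=\inner{y,v_j}_{\fH}=0$ in the $\dot a$ step.
\item Positive definiteness of $\bS(c)$ gives $\lambda+a^\top\bS(c)^{-1}a>0$, so the scalar equation can be solved.
\end{itemize}

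With this filled in, your argument is complete.
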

The system is equivalent to the one derived in \Cref{thm:dynamics-gamma}, but these equations are now explicit in the time derivative of the variables.
Moreover, this formulation allows us to highlight the role of the space $\Wtwom\cap\Vtwon^\perp$ and of the penalization parameter $\lambda$: if $\lambda$ approaches infinity, then \eqref{eq:dynamics-dot-V-2} reduces to the evolution equation for the reduced basis in the forward problem \cite{P21} without any dependency on the observations, while a finite value of $\lambda>0$ introduces a correction in $\Wtwom$ that depends on the velocity measurements. We remark that, although the target velocity $\dot{u}^\dagger$ appears in \eqref{eq:dynamics-dot-a-2} and \eqref{eq:dynamics-dot-V-2}, only the knowledge of $\proj{\Wtwom}\dot{u}^\dagger$ and $\proj{\Wtwom\cap\Vtwon^\perp}\dot{u}^\dagger$ is needed to evaluate the expressions on the right-hand side, and these quantities can be computed from the measurements of $\dot{u}^\dagger$ as it can be easily deduced from \Cref{lem:PWVperp}.

\section{Error and stability analysis}\label{sec:err}
In this section, we derive a bound for the error
\begin{equation*}
  e^2_\lambda(t) \coloneqq
  \frac 1 2 \norm{u^\dagger(t) - \varphi_1(\gamma(t))}_{\fH}^2
  +
  \frac \lambda 2 \int_{{\fTheta}}\norm{u(\theta)(t,\cdot)- \varphi_2(\gamma(t))(\theta)}^2_{\fH} \, \mu(\d\theta).
\end{equation*}
The bound is presented in \Cref{thm:reconstruction-error-bound-2}. To derive it, we rely on the following intermediate result.

\begin{lemma}
  \label{lem:bound-a-c-dot-2}
  Let $\gamma(t)\in\widetilde{\fGamma}$, and let $\dot{\gamma}(t)\in\widetilde{\tangent{\gamma(t)}{\fGamma}}$ be the solution of the minimization problem \eqref{eq:dynamics-gamma}. Assume that, for any $\theta\in\fTheta$,  the operator $\cB_\theta=\cJ\prt{\grad_{\fH} \cH_\theta}$ is Lipschitz continuous with constant $L_{\theta}$. Let $L:=\sup_{\theta\in\fTheta}L_\theta$. Then, for all $t\geq 0$,
  \begin{align}
    \norm*{\dot{u}^\dagger(t)- \frac{\d}{\d t}\varphi_1(\gamma(t))}_{\fH}
     & \leq \beta^{-1}(\Vtwon, \fW_m) \min_{\dot{v} \in \Vtwon^a} \norm{\dot{u}^\dagger(t) - \dot{v} }_{\fH} \label{eq:bd1-2} \\
    \begin{split}\label{eq:bd2-2}
      \norm*{ \dot{u}(\theta)(t,\cdot)- \frac{\d}{\d t}\varphi_2(\gamma(t))(\theta)}^2_{\fL} &\leq L^2\norm*{u(\theta)(t,\cdot)-\varphi_2(\gamma(t))(\theta)}_{\fL}^2 \\&+ \int_{\fTheta}\min_{h(\theta)\in\Vtwon^c(\theta)}\norm*{\dot{u}(\theta)(t,\cdot)-h(\theta)}_{\fH}^2\,\mu(d\theta)\end{split}
  \end{align}
  where
  \begin{align*}
    \Vtwon^a         & \coloneqq \prt*{\sum_{i=1}^{2n} a_i \dot{v}_i}+ \Vtwon                                             \\
    \Vtwon^c(\theta) & \coloneqq \prt*{\sum_{i=1}^{2n} c_i(\theta) \dot{v}_i}+ \Vtwon, \quad \forall \theta\in {\fTheta}.
  \end{align*}
\end{lemma}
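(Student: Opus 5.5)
The plan is to exploit the optimality conditions from the proof of \Cref{thm:dynamics-gamma}, namely \eqref{eq:opt-1}--\eqref{eq:opt-2}. These hold for the unique minimizer $\dot\gamma=(\dot a,\dot c,\dot V)$ with the basis velocity $\dot V$ \emph{frozen}. With $\dot V$ fixed, the remaining freedom $(\dot a,\dot c)$ amounts to choosing elements of the affine spaces $\Vtwon^a$ and $\Vtwon^c(\theta)$. So the two estimates reduce to two separate best-approximation statements: a PBDW-type oblique least-squares problem on $\Vtwon^a$ for \eqref{eq:bd1-2}, and an orthogonal projection onto $\Vtwon^c(\theta)$ for \eqref{eq:bd2-2}.

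For \eqref{eq:bd1-2}, write $\dot r:=\frac{\d}{\dt}\varphi_1(\gamma)=\sum_i(\dot a_i v_i+a_i\dot v_i)\in\Vtwon^a$. I will check that \eqref{eq:opt-1} says exactly that $\dot r$ minimizes $\dot v\mapsto\norm{\dot\rw-\proj{\Wm}\dot v}_{\fH}$ over $\Vtwon^a$, since varying $\dot a$ spans the direction space $\Vtwon$. Equivalently, $\proj{\Wm}(\dot u^\dagger-\dot r)\perp\proj{\Wm}\Vtwon$. Next, let $\dot v^*$ be the best approximation of $\dot u^\dagger$ in $\Vtwon^a$ and set $d:=\dot r-\dot v^*\in\Vtwon$. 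The orthogonality, together with $\inner{\proj{\Wm}x,y}=\inner{\proj{\Wm}x,\proj{\Wm}y}$, gives $\norm{\proj{\Wm}d}^2=\inner{\proj{\Wm}(\dot u^\dagger-\dot v^*),\proj{\Wm}d}$. Hence $\beta(\Vtwon,\Wm)\norm{d}\le\norm{\proj{\Wm}d}\le\norm{\proj{\Wm}(\dot u^\dagger-\dot v^*)}$. To get the constant $\beta^{-1}$ itself, rather than $1+\beta^{-1}$, I will use the standard oblique-projection argument from \cite{BCDDPW17,Mula2023}. The map $\dot u^\dagger-\dot v^*\mapsto\dot u^\dagger-\dot r$ is $\id-P$, where $P$ is the idempotent oblique projector onto $\Vtwon$ along $(\proj{\Wm}\Vtwon)^{\perp}$, after translating by $\dot v^*$. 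For a nontrivial idempotent, $\norm{\id-P}=\norm{P}\le\beta^{-1}$. Applying this to the residual $\dot u^\dagger-\dot v^*$ gives \eqref{eq:bd1-2}.

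For \eqref{eq:bd2-2}, use \eqref{eq:opt-2}: $\dot c(\theta)=\inner{\cJ\grad_{\fH}\cH_\theta(\varphi_2(\gamma))(\theta),V}_{\fH}$. Because $\dot V\perp\Vtwon$, the quantity $\frac{\d}{\dt}\varphi_2(\gamma)(\theta)$ is the $\fH$-orthogonal projection of $g(\theta):=\cB_\theta(\varphi_2(\gamma)(\theta))$ onto the affine space $\Vtwon^c(\theta)$; call it $\Pi_\theta g(\theta)$. Since $\dot u(\theta)=\cB_\theta(u(\theta))$, I will split
$\dot u-\Pi_\theta g=(\dot u-\Pi_\theta\dot u)+(\Pi_\theta\dot u-\Pi_\theta g)$.
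The first term is orthogonal to $\Vtwon$ in the projection sense, and the second lies in $\Vtwon$ and equals $\proj{\Vtwon}(\dot u-g)$. So the two terms are orthogonal, and Pythagoras gives
$\norm{\dot u-\Pi_\theta g}^2=\min_{h\in\Vtwon^c(\theta)}\norm{\dot u-h}^2+\norm{\proj{\Vtwon}(\dot u-g)}^2$.
The Lipschitz bound then gives $\norm{\proj{\Vtwon}(\dot u-g)}\le L_\theta\norm{u(\theta)-\varphi_2(\gamma)(\theta)}\le L\norm{u(\theta)-\varphi_2(\gamma)(\theta)}$. Integrating over $\theta$ against $\mu$ yields \eqref{eq:bd2-2}.

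The main obstacle is the sharp constant in \eqref{eq:bd1-2}. The naive triangle inequality only gives $1+\beta^{-1}$, so I must verify carefully that the affine reduction really produces an idempotent oblique projector onto $\Vtwon$. This requires $\proj{\Wm}$ restricted to $\Vtwon$ to be injective, which is guaranteed by $\beta>0$ and the invertibility of $\bG(V,\proj{\Wm}V)$ used in \Cref{thm:dynamics-gamma}. It also requires applying the Kato identity $\norm{\id-P}=\norm{P}$.
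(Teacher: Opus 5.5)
Your proposal is correct and follows the paper's proof. You freeze $\dot V$, bound the $\dot a$-part by the least-squares (PBDW) stability estimate on the affine space $\Vtwon^a$, and obtain \eqref{eq:bd2-2} from the orthogonal splitting into $\Vtwon$ and $\Vtwon^\perp$ components together with the Lipschitz bound. The only difference is that you prove the sharp $\beta^{-1}$ constant yourself, via the oblique projector onto $\Vtwon$ along $(\proj{\Wm}\Vtwon)^\perp$ and the identity $\norm{\id-P}=\norm{P}$, whereas the paper simply cites \cite[Theorem 2.9]{BCDDPW17}.
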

\begin{proof}
  Let $t>0$. The minimization problem \eqref{eq:dynamics-gamma} can be written as
  \begin{equation*}
    \min_{\delta V \in \widetilde{T_{V(t)} \cV_{2n}}}\left( \min_{\delta a \in \bR^{2n}} \cL_{\gamma,1}(\delta a, \delta V) + \lambda \int_{{\fTheta}} \min_{\delta c(\theta)\in\bR^{2n}} \cL_{\gamma,2}(\delta c(\theta), \delta V)  \mu(\d \theta)\right)
  \end{equation*}
  with
  \begin{align*}
    \cL_{\gamma,1}(\delta a, \delta V)         & \coloneqq \frac 1 2 \norm*{ \dot{\rw}(t) - \proj{\Wm}\prt*{\sum_{i=1}^{2n}\prt{\delta a_i v_i(t) + a_i(t)\delta v_i}}}^2_{\fH},                            \\
    \cL_{\gamma,2}(\delta c(\theta), \delta V) & \coloneqq \frac 1 2 \norm*{\cB_\theta(\varphi_2(\gamma(t))(\theta)) - \sum_{i=1}^{2n}\prt*{\delta c_i(\theta) v_i(t) + c_i(t)(\theta)\delta v_i}}^2_{\fH}.
  \end{align*}
  For a fixed $\delta V$,
  the minimizer $\dot{a}=\argmin_{\delta a \in \bR^{2n}} \cL_{\gamma,1}(\delta a, \delta V)$
  satisfies (see, e.g., \cite[Theorem 2.9]{BCDDPW17})
  \begin{equation*}
    \norm*{\dot{u}^\dagger(t)-\sum_{i=1}^{2n}(a_i(t)\delta v_i+\dot{a}_iv_i(t))}_{\fH}\leq\beta^{-1}(\Vtwon,\Wm)\norm*{\proj{\Vtwon^\perp}\left(\dot{u}^\dagger(t)-\sum_{i=1}^{2n}a_i(t)\delta v_i\right)}_{\fH}.
  \end{equation*}
  Then, setting $\delta v_i=\dot{v}_i$, for all $1\leq i\leq 2n$, yields
  \eqref{eq:bd1-2}; thereby
  \begin{align*}
    \norm*{\dot{u}^\dagger(t)- \frac{\d}{\d t}\varphi_1(\gamma(t))}_{\fH} & =\norm*{\dot{u}^\dagger(t)-\sum_{i=1}^{2n}(a_i(t)\dot{v}_i+\dot{a}_iv_i(t))}_{\fH} \\&\leq\beta^{-1}(\Vtwon,\Wm)\norm*{\proj{\Vtwon^\perp}\left(\dot{u}^\dagger(t)-\sum_{i=1}^{2n}a_i(t) \dot{v}_i\right)}_{\fH}\\&=\beta^{-1}(\Vtwon,\Wm)\inf_{\dot{v}\in\Vtwon^a}\norm{\dot{u}^\dagger(t)-\dot{v}}_{\fH}.
  \end{align*}
  Similarly, for fixed $\delta V$ and $\theta\in\fTheta$,
  the minimizer $\dot{c}(\theta)=\argmin_{\delta c(\theta)\in\bR^{2n}} \cL_{\gamma,2}(\delta c(\theta), \delta V)$
  satisfies $\dot{c}_i(\theta)=\inner{\cB_\theta(\varphi_2(\gamma(t))(\theta)),v_i}_{\fH}$ $\mu$-a.e., for any $1\leq i\leq 2n$.
  Therefore,
  \begin{align*}
    \norm*{ \dot{u}(\theta)(t,\cdot)- \frac{\d}{\d t}\varphi_2(\gamma(t))(\theta) }^2_{\fL} & = \int_{\fTheta}\norm*{\dot{u}(\theta)(t,\cdot) - \sum_{i=1}^{2n}\left(\dot{c}_i(\theta)v_i(t)+c_i(t)(\theta)\dot{v}_i\right)}^2_{\fH}\,\mu(d\theta) \\&=\int_{\fTheta}\norm*{\dot{u}(\theta)(t,\cdot) - \proj{\Vtwon}\cB_\theta(\varphi_2(\gamma(t))(\theta)) - \sum_{i=1}^{2n}c_i(t)(\theta)\dot{v}_i}^2_{\fH}\,\mu(d\theta).
  \end{align*}
  We then decompose the term $\dot{u}(\theta)(t,\cdot)$ into its projections onto $\Vtwon$ and $\Vtwon^\perp$, and we write the integrand as
  \begin{align*}
     & \norm*{\proj{\Vtwon}\Big(\cB_\theta(u(\theta)(t,\cdot))-\cB_\theta(\varphi_2(\gamma(t))(\theta))\Big)}_{\fH}^2 + \norm*{\proj{\Vtwon^\perp}\dot{u}(\theta)(t,\cdot)-\sum_{i=1}^{2n}c_i(t)(\theta)\dot{v}_i}_{\fH}^2 \\&\leq L^2\norm*{u(\theta)(t,\cdot)-\varphi_2(\gamma(t)(\theta))}^2_{\fH} + \norm*{\proj{\Vtwon^\perp}\dot{u}(\theta)(t,\cdot)-\sum_{i=1}^{2n}c_i(t)(\theta)\dot{v}_i}_{\fH}^2\\&=L^2\norm*{u(\theta)(t,\cdot)-\varphi_2(\gamma(t))(\theta)}^2_{\fH}+\inf_{h(\theta)\in\Vtwon^c(\theta)}\norm*{\dot{u}(\theta)(t,\cdot)-h(\theta)}^2_{\fH}.
  \end{align*}
  Integrating over $\fTheta$ yields \eqref{eq:bd2-2}.
\end{proof}

\begin{theorem}
  \label{thm:reconstruction-error-bound-2}
  For every $t\geq0$,
  \begin{equation}
    \label{eq:reconstruction-error-bound-2}
    e_\lambda(t)\leq e_\lambda(0)\exp\left(\sqrt{2}Lt\right) + \int_0^t C(s)\exp\left(\sqrt{2}L(t-s)\right)\d s
  \end{equation}
  with
  \begin{equation*}
    C(t) \coloneqq \prt*{\beta^{-2}(\Vtwon(t), \fW_m(t)) \min_{\dot{v} \in \Vtwon^a(t)} \norm{\dot{u}^\dagger(t) - \dot{v} }^2_{\fH} + \lambda \int_{{\fTheta}} \min_{h(\theta)\in \Vtwon^c(t,\theta)}\norm{ \dot{u}(\theta)(t,\cdot) -h(\theta)}^2_{\fH} \mu(\d \theta)  }^{1/2}
  \end{equation*}
  and $\Vtwon^a$ and $\Vtwon^c(\theta)$ are the affine spaces introduced in \Cref{lem:bound-a-c-dot-2}.
\end{theorem}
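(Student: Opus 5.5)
We will derive a differential inequality for $e_\lambda(t)$ and close it with Grönwall's lemma. Set $\varepsilon_1(t)\coloneqq u^\dagger(t)-\varphi_1(\gamma(t))$ and $\varepsilon_2(t)(\theta)\coloneqq u(\theta)(t,\cdot)-\varphi_2(\gamma(t))(\theta)$. Then
\[
e_\lambda^2=\tfrac12\norm{\varepsilon_1}_{\fH}^2+\tfrac\lambda2\norm{\varepsilon_2}_{\fL}^2 .
\]
Since $u^\dagger$ and each $u(\theta)$ are $\cC^1$ in time and $\gamma$ is $\cC^1$ (it solves the ODE system of \Cref{thm:dynamics-gamma}), we may differentiate. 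This gives
\[
\frac{\d}{\d t}e_\lambda^2=\inner{\varepsilon_1,\dot\varepsilon_1}_{\fH}+\lambda\inner{\varepsilon_2,\dot\varepsilon_2}_{\fL}.
\]
By Cauchy--Schwarz in $\fH$ and in $\fL$, followed by Cauchy--Schwarz in $\bR^2$ with weights $(1,\lambda)$, this is at most
\[
\sqrt2\, e_\lambda\,\prt*{\norm{\dot\varepsilon_1}_{\fH}^2+\lambda\norm{\dot\varepsilon_2}_{\fL}^2}^{1/2}.
\]

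Next we insert the velocity bounds of \Cref{lem:bound-a-c-dot-2}, with $\dot v_i$ the actual basis velocities. Inequality \eqref{eq:bd1-2} bounds $\norm{\dot\varepsilon_1}^2_{\fH}$ by $\beta^{-2}\min_{\dot v\in\Vtwon^a}\norm{\dot u^\dagger-\dot v}^2$. Inequality \eqref{eq:bd2-2} gives
\[
\lambda\norm{\dot\varepsilon_2}^2_{\fL}\le \lambda L^2\norm{\varepsilon_2}_{\fL}^2+\lambda\int_{\fTheta}\min_{h\in\Vtwon^c(\theta)}\norm{\dot u(\theta)-h}^2\,\mu(\d\theta).
\]
Adding the two, the model-independent terms are exactly $C(t)^2$. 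The remaining term satisfies $\lambda L^2\norm{\varepsilon_2}^2_{\fL}\le 2L^2 e_\lambda^2$. Using $\sqrt{x+y}\le\sqrt x+\sqrt y$, we obtain
\[
\frac{\d}{\d t}e_\lambda^2\le \sqrt2\, e_\lambda\prt*{C(t)+\sqrt2 L e_\lambda}.
\]

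Dividing by $2e_\lambda$ yields a linear differential inequality $\dot e_\lambda\le L e_\lambda + C(t)/\sqrt2$. Grönwall's lemma then gives the stated bound with a better exponent; since $L\le \sqrt2L$ and $1/\sqrt2\le1$, the claimed inequality \eqref{eq:reconstruction-error-bound-2} follows a fortiori. If the constants come out differently, for instance through a coarser estimate such as $\dot e_\lambda\le \sqrt2 L e_\lambda+C$, the same Grönwall step still applies. The division by $e_\lambda$ is illegitimate where $e_\lambda=0$. We will avoid it by working with $\sqrt{e_\lambda^2+\eta}$ for $\eta>0$ and letting $\eta\to0$. The resulting linear inequality is integrated with the factor $\exp(-\sqrt2Lt)$.

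The main obstacle is the regularity justification: we need $\gamma(t)$ to remain in $\widetilde{\fGamma}$ for all $t$ ($\bS(c)$ invertible, $\beta>0$) so that \Cref{lem:bound-a-c-dot-2} applies pointwise in time, and we need $e_\lambda^2$ to be absolutely continuous. We will take both as standing hypotheses inherited from \Cref{thm:dynamics-gamma}. A secondary subtlety is whether the Lipschitz bound is applied before or after the $\sqrt{x+y}$ splitting; this only affects the constant in the exponent.
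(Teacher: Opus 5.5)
Your proposal is correct and follows essentially the same route as the paper: differentiate $e_\lambda^2$, use Cauchy--Schwarz to produce the factor $\sqrt{2}e_\lambda$, insert the velocity bounds of \Cref{lem:bound-a-c-dot-2} together with $\lambda L^2\norm{\varepsilon_2}_{\fL}^2\le 2L^2e_\lambda^2$, and close with a Gr\"onwall argument. The differences are minor. The paper uses the cruder inequality $\sqrt{\alpha+\gamma}\le\sqrt{2}(\sqrt{\alpha}+\sqrt{\gamma})$, which gives exactly $\frac{\d}{\d t}e_\lambda^2\le 2C e_\lambda+2\sqrt{2}Le_\lambda^2$ and hence the stated constants; your sharper splitting yields a stronger bound that implies it. The paper also cites the Willett--Wong Gr\"onwall-type inequality instead of your $\sqrt{e_\lambda^2+\eta}$ regularization to avoid dividing by $e_\lambda$.
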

\begin{proof}
  Let $t\geq 0$. We have
  \begin{equation}\label{eq:err-bound-intermediate-1-2}
    \begin{aligned}
      \frac{\d e^2_\lambda}{\d t}(t)
       & =
      \inner*{\dot{u}^\dagger(t)- \frac{\d}{\d t}\varphi_1(\gamma(t)), u^\dagger(t) - \varphi_1(\gamma(t))}_{\fH}                                                                                                            \\
       & \quad + \lambda
      \int_{{\fTheta}} \inner*{ \dot{u}(\theta)(t,\cdot)- \frac{\d}{\d t}\varphi_2(\gamma(t))(\theta), u(\theta)(t,\cdot)- \varphi_2(\gamma(t))(\theta)}_{\fH} \mu(\d \theta)                                                \\
       & \leq \norm*{\dot{u}^\dagger(t)- \frac{\d}{\d t}\varphi_1(\gamma(t))}_{\fH}
      \norm*{u^\dagger(t) - \varphi_1(\gamma(t))}_{\fH}                                                                                                                                                                      \\
       & \quad + \lambda
      \norm*{ \dot{u}(\theta)(t,\cdot)- \frac{\d}{\d t}\varphi_2(\gamma(t))(\theta) }_{\fL}
      \norm*{u(\theta)(t,\cdot)- \varphi_2(\gamma(t))(\theta)}_{\fL}                                                                                                                                                         \\
       & \leq \prt*{ \norm*{\dot{u}^\dagger(t)- \frac{\d}{\d t}\varphi_1(\gamma(t))}^2_{\fH} + \lambda \norm*{ \dot{u}(\theta)(t,\cdot)- \frac{\d}{\d t}\varphi_2(\gamma(t))(\theta) }_{\fL}^2 }^{1/2} \sqrt{2}e_\lambda(t).
    \end{aligned}
  \end{equation}
  We next apply \Cref{lem:bound-a-c-dot-2} to bound the two terms inside the square root of \eqref{eq:err-bound-intermediate-1-2} and the inequality $\sqrt{\alpha+\gamma}\leq\sqrt{2}(\sqrt{\alpha}+\sqrt{\gamma})$ for all $\alpha,\gamma\geq0$, which directly leads to
  \begin{equation*}
    \frac{\d e^2_\lambda}{\d t}(t)\leq 2C(t)e_\lambda(t) + 2\sqrt{2}Le^2_\lambda(t).
  \end{equation*}
  By a Grönwall estimate (see, e.g., \cite[Theorem 2]{WW65}), we obtain the error bound \eqref{eq:reconstruction-error-bound-2}.
\end{proof}

\section{Dynamical sensor placement}
\label{sec:moving-sensors}
\Cref{thm:reconstruction-error-bound-2} shows that the quality of the reconstruction deteriorates if the stability constant $\beta(\Vtwon(t),\Wtwom)$ approaches zero. On the other hand, when the reduced space $\Vtwon$ evolves in time, it is possible to construct simple examples where $\beta(\Vtwon(t),\Wtwom)\to0$ as $t\to\infty$, see e.g. \cite[Section 2.2]{MPV25}. This motivates the development of techniques to also evolve the observation space $\Wtwom$ in time. In this work we follow the same procedure as in \cite{MPV25}, that we briefly recall here for completeness. The gist of the method is to connect the tasks of evolving the observation space $\Wtwom(t)$ and dynamically updating the positions of the sensors, and to define an evolution of the sensors' locations that maximizes the stability constant $\beta(\Vtwon(t), \Wtwom(t))$ at all times.

In the following, we denote the location of the $j$-th sensors at time $t$ as $\overline{x}_j(t)\in\fD$, for $1\leq j\leq m$, and we collect the sensors' locations in the vector $\overline{x}(t) = (\overline{x}_1(t),\dots,\overline{x}_m(t))\in\fD^m$. We then assume that the Riesz representer $\rw_i$ of the $i$th measurement $\ell_i$ is a function that depends on the location of the $i$th sensor, so that we can write $\rw_i(t) = \rw_i(\overline{x}_i(t))$. As a guiding example, we consider in this work the case of local averages of width $\sigma>0$ around $\overline{x}_i(t)$: if $\fH=L^2(\fD)\times L^2(\fD)$ with $\fD\subset\bR^d$, we set
\begin{equation*}
  \widetilde{\rw}_i(t)(x) := \frac{1}{(2\pi\sigma^2)^{d/2}}\exp\left(-\frac{\norm{x-\overline{x}_i(t)}_{2}}{2\sigma^2}\right), \qquad 1\leq i\leq m
\end{equation*}
and we define $\Wtwom(t)=\vspan\{W(t)\}$, where $W(t)=(\rw_1(t),\dots,\rw_{2m}(t))\in\fH^{2m}$ and
\begin{equation*}
  \rw_i(t) = \begin{cases}
    (\widetilde{\rw}_i(t), 0)     & 1\leq i\leq m    \\
    (0, \widetilde{\rw}_{i-m}(t)) & m+1\leq i \leq2m
  \end{cases}.
\end{equation*}
This means that every sensor yields a measurement of both components of the target function in $\fH$. Moreover, observe that $\Wtwom(t)$ is a symplectic vector space at all times.
Since $\widetilde{\rw}_i$ depends on time only through the position of the $i$th sensor, we shall write $\widetilde{\rw}_i(t)=\widetilde{\rw}_i(\overline{x}_i(t))$, and, consequently, $W(t)=W(\overline{x}(t))$ and $\Wtwom(t) = \Wtwom(\overline{x}(t))$. Assuming that the evolution of the reduced space $\Vtwon(t)$ is prescribed, we can then regard the stability constant $\beta$ as a function of the sensors' locations: $\beta(\overline{x}(t))=\beta(\Vtwon(t), \Wtwom(\overline{x}(t)))$. Then, in view of \Cref{thm:reconstruction-error-bound-2}, we define the trajectory $\overline{x}(t)$ as the one that maximizes $\beta$.

In practice, this procedure boils down to the following time-discrete scheme. Assume that the location of the sensors at time $t_j$ is given, and that the reduced space is updated from $\Vtwon(t_j)$ to $\Vtwon(t_{j+1})$. We want to determine the new locations $\overline{x}(t_{j+1})\in\fD^{m}$ by maximizing $\beta(\Vtwon(t_{j+1}), \Wtwom(\cdot))$. One possibility to solve this problem is by means of a gradient descent method with the old locations $\overline{x}(t_j)$ as an initial guess, as outlined in \Cref{alg:sensors_update}. We refer to \cite{MPV25} for a detailed discussion of this procedure, as well as for the expression of the gradient $\nabla_{\overline{x}}\beta$ appearing at line 6.
\begin{algorithm}[H]
  \caption{Update the position of the sensors}\label{alg:sensors_update}
  \begin{algorithmic}[1]
    \Procedure{$\overline{x}(t_{j+1})$ = \textsc{Sensors update}}{$\overline{x}(t_j)$, $\Vtwon(t_{j+1})$}
    \State $\overline{x}^{(0)}\gets\overline{x}(t_j)$
    \State $l\gets0$
    \While{$l<l_{\text{max}}$}
    \State Determine step size $\alpha_l$
    \State $\overline{x}^{(l+1)} = \overline{x}^{(l)} + \alpha_l\nabla_{\overline{x}}\beta(\Vtwon(t_{j+1}),\Wtwom(\overline{x}^{(l)}))$
    \State $l\gets l+1$
    \EndWhile
    \State $\overline{x}(t_{j+1}) = \overline{x}^{(l)}$
    \EndProcedure
  \end{algorithmic}
\end{algorithm}

\section{Numerical experiments}\label{sec:num-exp}

In this section we assess the performance of the reconstruction algorithm on the two-dimensional shallow water equations
\begin{equation}\label{eq:SWE}
  \begin{cases}
    \partial_t h + \nu\nabla\cdot(h\nabla\Phi) = 0                     & (t,x)\in (0,T]\times \fD, \\
    \partial_t\Phi + \frac{\nu}{2}\lvert\nabla\Phi\rvert^2 + \nu h = 0 & (t,x)\in (0,T]\times \fD,
  \end{cases}
\end{equation}
with periodic boundary conditions, where $x=(x_1,x_2)$ and the spatial domain $\fD\subset\Rbb^2$ is the rectangle $[-L_{x_1},L_{x_1}]\times[-L_{x_2},L_{x_2}]$. The initial condition is  $h(\theta)(0,x)=1+\frac{1}{2}e^{-\alpha\lVert x\rVert^2}$ and  $\Phi(\theta)(0,x)=0$. The parameter $\theta=(\alpha,\nu)$ belongs to the parameter space $\fTheta=[1.1,1.7]\times[0.8,1]\subset\Rbb^2$.

The shallow water equations admit a Hamiltonian formulation in $\fH = L^2(\fD)\times L^2(\fD)$ where the Hamiltonian is defined as
\begin{equation*}
  \Hcal_\nu(u) =
  \begin{cases}
    \frac{\nu}{2}\int_{\fD} h(x)\left(\lvert\nabla\Phi(x))\rvert^2+h(x)\right)\,dx, & \quad \forall u=(h, \Phi)\in \dom(\cH)\coloneqq L^2(\fD)\times H^1(\fD) \\
    +\infty,                                                                        & \quad \forall u \in \fH\setminus\dom(\cH).
  \end{cases}
\end{equation*}
We assume that, at each time $t$ and for each parameter $\theta$, the functions $h(\theta)(t,\cdot)\in L^2(\fD)$ and $\Phi(\theta)(t,\cdot)\in L^2(\fD)$ solutions of \eqref{eq:SWE} are sufficiently regular so that they admit classical derivatives, and $\grad_{\fH}\Hcal$ is well-defined. We then employ a finite difference scheme for spatial discretization on a uniform computational grid of $N_{x_1}$ and $N_{x_2}$ intervals in each spatial direction.

In the numerical experiments we choose $L_{x_1}=L_{x_2}=8$ and $N_{x_1}=N_{x_2}=50$, which corresponds to $\Delta {x_1}=\Delta x_2 = 0.32$ and $N=N_{x_1}N_{x_2}=5000$. The final time is $T=7$ and we employ $N_t=3500$ time steps for temporal discretization, so that $\Delta t=2\times10^{-3}$. The parameter space $\fTheta$ is discretized using $10$ uniformly sampled parameters in each direction, which gives $p=100$. The true parameter is $\theta^\dagger=(1.3616,0.8871)$, and we consider local averages with a finite width $\sigma=0.1$ for the measurement of the velocity $\dot{u}^\dagger=\Dot{u}(\theta^\dagger)$ as discussed in \Cref{sec:moving-sensors}.
We set $n=6$, $m=8$ and $\lambda=0.1$. Qualitatively similar results were observed for different values of $\lambda$.

We consider three different test cases. In the first scenario, the sensors are randomly placed around the initial profile of the solution, and are not moved during the simulation. In a second test, we consider the same setup with static measurements, but we arrange the sensors on a uniform grid instead. Finally, in the third test, the sensors are moved at each time step as discussed in \Cref{sec:moving-sensors}. We compare the three cases by analyzing the quality of the reconstruction of the solution associated to the true parameter $\theta^\dagger$ and the accuracy of the approximation across the whole parameter space $\fTheta$. To this end we introduce the relative errors
\begin{equation}\label{eq:rec_error}
  e_{\varphi_1}(t) := \frac{\lVert {u}^\dagger(t) - \varphi_1(\gamma(t))\rVert_{\fH}}{\lVert u^\dagger(t)\rVert_{\fH}} \qquad \text{and} \qquad e_{\varphi_2}(t) := \frac{\lVert {u}(\theta)(t,\cdot) - \varphi_2(\gamma(t))(\theta)\rVert_{\fL}}{\lVert u(\theta)(t,\cdot)\rVert_{\fL}},
\end{equation}
respectively. We also consider the error
\begin{equation}\label{eq:Ham_error}
  e_\Hcal(t) := \frac{\lvert\Hcal_{\theta^\dagger}(\varphi_1(\gamma(t)))-\Hcal_{\theta^\dagger}(\varphi_1(\gamma(0)))\rvert}{\lvert\Hcal_{\theta^\dagger}(\varphi_1(\gamma(0)))\rvert},
\end{equation}
that measures the variation of the Hamiltonian associated with the exact parameter $\theta^\dagger$ compared to its initial value when evaluated at the reconstructed solution.
Moreover, in order to assess the quality of the reconstruction, we compute the error
\begin{equation}\label{eq:bestappr_error}
  \underline{e_{\varphi_1}}(t) := \frac{\lVert u^\dagger(t) - \proj{\Vtwon(t)}u^\dagger(t)\rVert_{\fH}}{\lVert u^\dagger(t)\rVert_{\fH}},
\end{equation}
which is a measure of the best possible approximation of the exact solution $u^\dagger(t)$ by an element of $\Vtwon(t)$ at each time. We remark that $\underline{e_{\varphi_1}}(t)\leq e_{\varphi_1}(t)$ for all $t\in[0,T]$. Finally, we also monitor the time evolution of the stability constant $\beta(\Vtwon(t),\Wtwom(t))$ appearing in \Cref{thm:reconstruction-error-bound-2}.

\paragraph{Static sensors, random placement.}
We consider the case of static measurements first: the sensors' locations are chosen randomly in the square $[-1,1]\times[-1,1]$, around the initial solution profile, and kept fixed throughout the simulation. We show in \Cref{fig:static_solution} a comparison between the exact solution (first column) and the reconstruction (second column) at the final time. For ease of comparison, we also report the absolute value of the difference in the third column. We observe that the reconstructed solution is qualitatively different from the target, and in particular it completely fails to capture the amplitude of the height $h$.
\begin{figure}[H]
  \centering
  \makebox[\textwidth]{
    \includegraphics[width=1.2\linewidth]{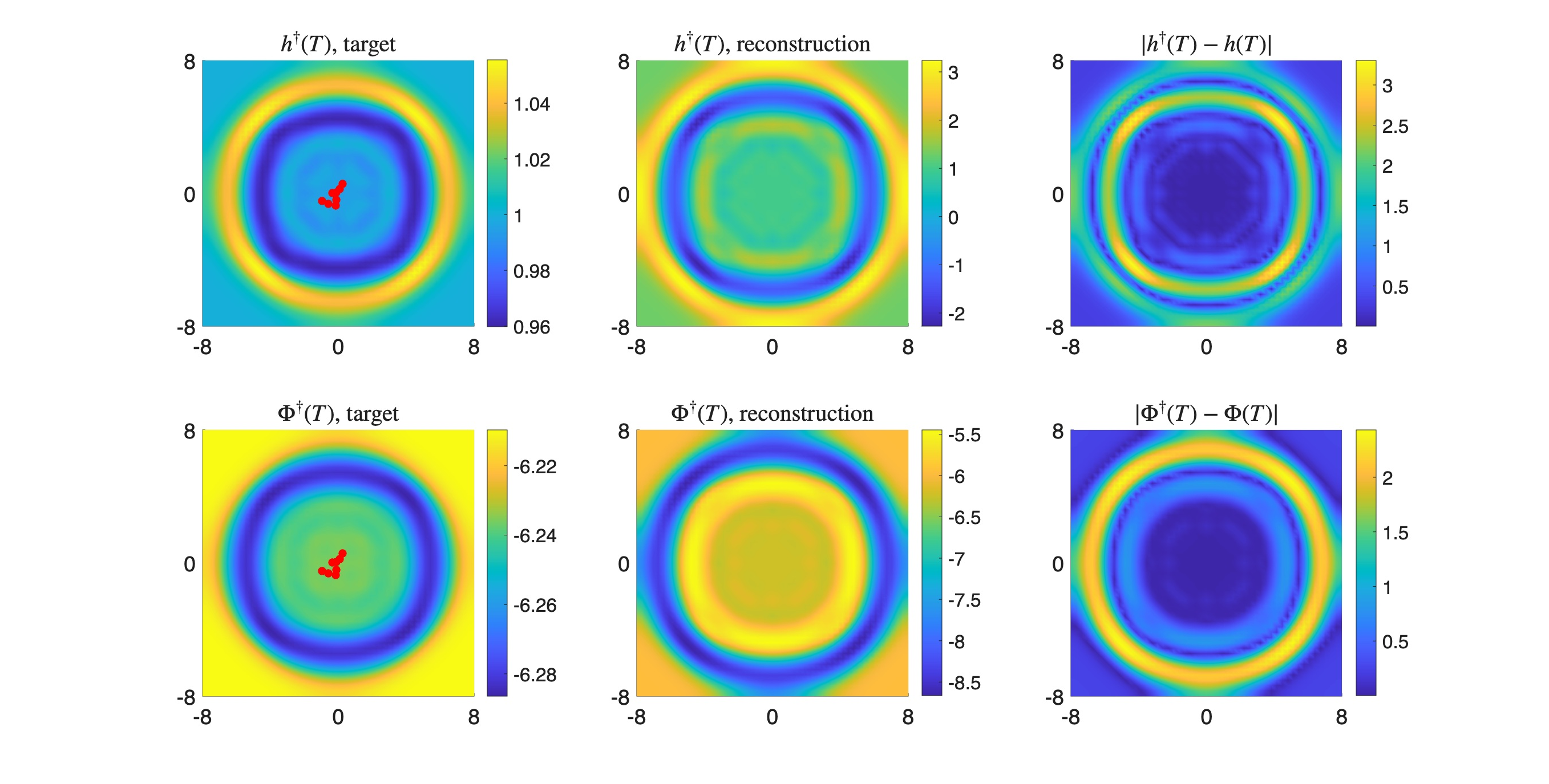}
  }
  \caption{\footnotesize Reconstruction with static, randomly placed sensors. Comparison between the height $h^\dagger(T)$ and the velocity potential $\Phi^\dagger(T)$ associated with the exact parameter at the final time (left column) and the reconstructed quantities (center column), and absolute value of the difference between target and reconstruction (right column). The red dots in the left column represent the locations of the sensors.}
  \label{fig:static_solution}
\end{figure}
The evolution of the reconstruction error over time is reported in \Cref{fig:static_error_beta}, together with the stability constant $\beta(\Vtwon(t),\Wtwom)$. The reconstruction error is significantly larger than the best approximation error at all times, and the value of $\beta(\Vtwon(t),\Wtwom)$ is around $10^{-5}$ at the final time. This has also a negative effect on the preservation of the Hamiltonian evaluated at the reconstructed solution, as shown in \Cref{fig:static_Hamiltonian}.

\begin{figure}[H]
  \centering
  \begin{tikzpicture}
    \begin{groupplot}[
        group style={group size=2 by 1,
            horizontal sep=3cm},
        width=6cm, height=5cm,
      ]
      \nextgroupplot[
        xlabel={$t$},
        ylabel style = {yshift=-.2cm},
        axis line style = thick,
        grid=both,
        minor tick num=0,
        minor tick style={draw=none},
        ytick={1e-08,1e-06,1e-04,1e-02,1e-00},
        ymode=log,
        grid style = {gray,opacity=0.2},
        xmin=0, xmax=7,
        ymin=1e-06, ymax=1e-00,
        xlabel style={font=\footnotesize},
        ylabel style={font=\footnotesize},
        x tick label style={font=\footnotesize},
        y tick label style={font=\footnotesize},
        legend style={font=\footnotesize},
        legend cell align={left},
        legend columns = 3,
        legend style={at={(0.5,1.25)},anchor=north}]
      \addplot+[color=red,mark=none,line width=1.5pt] table[x=t,y=e_phi1] {data/static_error_beta.txt};
      \addplot+[color=blue,mark=none,line width=1.5pt] table[x=t,y=e_phi1_und] {data/static_error_beta.txt};
      \addplot+[color=ForestGreen,mark=none,line width=1.5pt] table[x=t,y=e_phi2] {data/static_error_beta.txt};
      \legend{{$e_{\varphi_1}(t)$},{$\underline{e_{\varphi_1}}(t)$},{$e_{\varphi_2}(t)$}};
      \nextgroupplot[ylabel={$\beta(\Vtwon(t),\Wtwom)$},
        xlabel={$t$},
        ylabel style = {yshift=.1cm},
        axis line style = thick,
        grid=both,
        minor tick num=0,
        ymode = log,
        grid style = {gray,opacity=0.2},
        xmin=0, xmax=7,
        ymin=1e-06, ymax=1,
        ytick={1e-06,1e-04,1e-02,1e-00},
        xlabel style={font=\footnotesize},
        ylabel style={font=\footnotesize},
        x tick label style={font=\footnotesize},
        y tick label style={font=\footnotesize}]
      \addplot+[color=black,mark=none,line width=1.5pt] table[x=t,y=beta] {data/static_error_beta.txt};
    \end{groupplot}
  \end{tikzpicture}
  \caption{\footnotesize Reconstruction with static, randomly placed sensors. Left: evolution of the relative reconstruction error $e_{\varphi_1}(t)$ and the relative approximation error $e_{\varphi_2}(t)$ from \eqref{eq:rec_error}, and the best approximation error $\underline{e_{\varphi_1}}(t)$ from \eqref{eq:bestappr_error}. Right: evolution of the stability constant $\beta(\Vtwon(t),\Wtwom)$.}\label{fig:static_error_beta}
\end{figure}

\begin{figure}[H]
  \centering
  \begin{tikzpicture}
    \begin{groupplot}[
        group style={group size=2 by 1,
            horizontal sep=3cm},
        width=6cm, height=5cm,
      ]
      \nextgroupplot[ylabel={},
        xlabel={$t$},
        ylabel style = {yshift=-.2cm},
        axis line style = thick,
        grid=both,
        minor tick num=0,
        minor tick style={draw=none},
        grid style = {gray,opacity=0.2},
        xmin=0, xmax=7,
        ymin=-200, ymax=600,
        xlabel style={font=\footnotesize},
        ylabel style={font=\footnotesize},
        x tick label style={font=\footnotesize},
        y tick label style={font=\footnotesize},
        legend style={font=\footnotesize},
        legend cell align={left},
        legend columns = 2,
        legend style={at={(0.5,1.3)},anchor=north}]
      \addplot+[color=red,mark=none,line width=1.5pt] table[x=t,y=Hustar] {data/static_Hamiltonian.txt};
      \addplot+[color=blue,mark=none,line width=1.5pt] table[x=t,y=Hu] {data/static_Hamiltonian.txt};
      \legend{{$\Hcal_{\theta^\dagger}(\varphi_1(\gamma(t)))$},{$\Hcal_{\theta^\dagger}(u^\dagger(t))$}};
      \nextgroupplot[ylabel={$e_\Hcal(t)$},
        xlabel={$t$},
        ylabel style = {yshift=.1cm},
        axis line style = thick,
        grid=both,
        minor tick num=0,
        ymode = log,
        grid style = {gray,opacity=0.2},
        xmin=0, xmax=7,
        ymin=1e-05, ymax=10,
        ytick={1e-08,1e-06,1e-04,1e-02,1e-00},
        xlabel style={font=\footnotesize},
        ylabel style={font=\footnotesize},
        x tick label style={font=\footnotesize},
        y tick label style={font=\footnotesize}]
      \addplot+[color=black,mark=none,line width=1.5pt] table[x=t,y=EH] {data/static_Hamiltonian.txt};
    \end{groupplot}
  \end{tikzpicture}
  \caption{\footnotesize Reconstruction with static, randomly placed sensors. Left: Hamiltonian evaluated at the exact solution $u^\dagger(t)$ and at the reconstructed solution $\varphi_1(\gamma(t))$. Right: evolution of the error in the Hamiltonian conservation $e_\Hcal(t)$ \eqref{eq:Ham_error}.}\label{fig:static_Hamiltonian}
\end{figure}

\paragraph{Static sensors, uniform placement.}
We now consider the same setup of $m=8$ static measurements, which are now placed on a uniform grid around the initial profile of $h$. In a general procedure, when the number of sensors satisfies $m=m_{x_1}m_{x_2}$ for $m_{x_1}, m_{x_2}\in\Nbb$, one can consider a rectangle $[-\widehat{L}_{x_1}, \widehat{L}_{x_1}]\times [-\widehat{L}_{x_2}, \widehat{L}_{x_2}]$, with $\widehat{L}_{x_1}\leq L_{x_1}$, $\widehat{L}_{x_2}\leq L_{x_2}$ and place the sensors on the inner vertices of the grid resulting from the discretization by uniform step sizes $\widehat{d}_{x_i}:=2\widehat{L}_{x_i}/(m_{x_i}+1)$, for $i\in\{1,2\}$. Numerical experiments actually reveal that a symmetric arrangement of sensors around the center of the computational domain $\fD$ results in a rank-deficient matrix $\Gbb$, i.e., the stability constant $\beta$ vanishes. This is due to the symmetry of the target solution around the origin. One possible remedy to this issue is to break the symmetry and ensure that the center of the sensors' arrangement does not coincide with the center of $\fD$. In other words, we consider the rectangle $[-\widehat{L}_{x_1}-\varepsilon_{x_1}, \widehat{L}_{x_1}-\varepsilon_{x_1}]\times [-\widehat{L}_{x_2}-\varepsilon_{x_2}, \widehat{L}_{x_2}-\varepsilon_{x_2}]$ for some $\varepsilon_{x_i}\neq0$, $i\in\{1,2\}$. In the particular experiment we discuss in this section, we choose $m_{x_1}=4$, $m_{x_2}=2$, $\widehat{L}_{x_i} = L_{x_i}/3$ and $\varepsilon_{x_i}=\widehat{d}_{x_i}/10$. Although only this configuration is presented here, we observed qualitatively similar results with different choices of $m_{x_i}$, $\widehat{L}_{x_i}$ and $\varepsilon_{x_i}$.

We first plot the reconstruction at the final time and a comparison with the target in \Cref{fig:static_unif_solution}. Although the maximum value of the pointwise error is smaller than in the case of randomly placed sensors, being around $10\%$ as shown in the third column, the reconstructed profile of both the height $h$ and the potential $\Phi$ is completely different from the target.

\begin{figure}[H]
  \centering
  \makebox[\textwidth]{
    \includegraphics[width=1.2\linewidth]{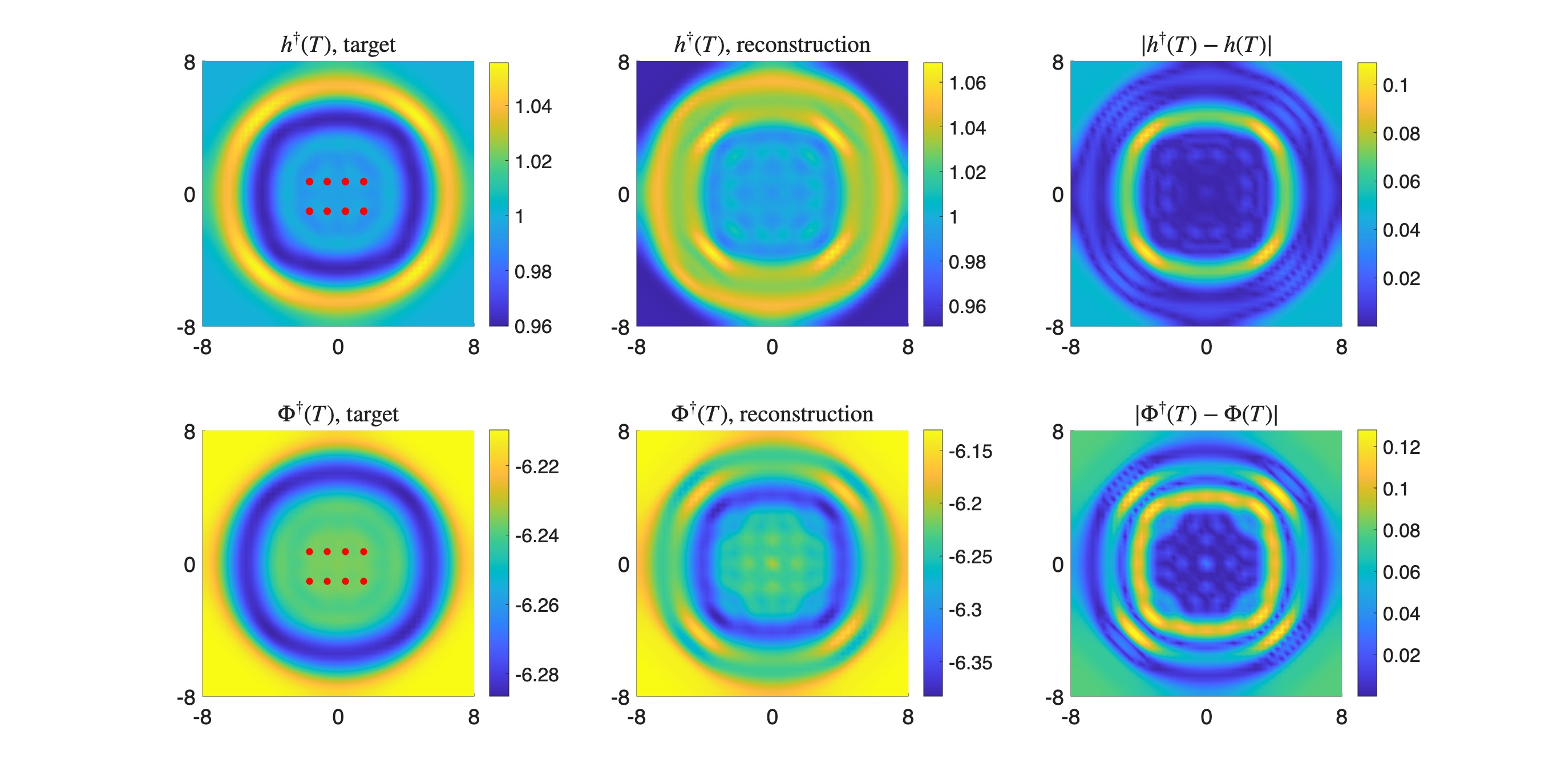}
  }
  \caption{\footnotesize Reconstruction with static, uniformly placed sensors. Comparison between the height $h^\dagger(T)$ and the velocity potential $\Phi^\dagger(T)$ associated with the exact parameter at the final time (left column) and the reconstructed quantities (center column), and absolute value of the difference between target and reconstruction (right column). The red dots in the left column represent the locations of the sensors.}
  \label{fig:static_unif_solution}
\end{figure}

The relative reconstruction error in the $\fH$-norm remains above $1\%$ in the final stages of the simulation, as reported in \Cref{fig:static_unif_error_beta}.  We also observe that the value of the stability constant $\beta(\Vtwon(t),\Wtwom)$ is smaller than $10^{-3}$ for most of the time interval $[0,T]$. Finally, \Cref{fig:static_unif_Hamiltonian} shows that the relative error in the Hamiltonian conservation can be as large as $1$. The results of this test confirm that a uniform arrangement of sensors, while reasonable and easily achievable in practice, does not correspond to the optimal value of the stability constant. Moreover, it can result in rank-deficient matrices in the case of symmetric solutions.

\begin{figure}[H]
  \centering
  \begin{tikzpicture}
    \begin{groupplot}[
        group style={group size=2 by 1,
            horizontal sep=3cm},
        width=6cm, height=5cm,
      ]
      \nextgroupplot[
        xlabel={$t$},
        ylabel style = {yshift=-.2cm},
        axis line style = thick,
        grid=both,
        minor tick num=0,
        minor tick style={draw=none},
        ytick={1e-08,1e-06,1e-04,1e-02,1e-00},
        ymode=log,
        grid style = {gray,opacity=0.2},
        xmin=0, xmax=7,
        ymin=1e-06, ymax=1e-00,
        xlabel style={font=\footnotesize},
        ylabel style={font=\footnotesize},
        x tick label style={font=\footnotesize},
        y tick label style={font=\footnotesize},
        legend style={font=\footnotesize},
        legend cell align={left},
        legend columns = 3,
        legend style={at={(0.5,1.25)},anchor=north}]
      \addplot+[color=red,mark=none,line width=1.5pt] table[x=t,y=e_phi1] {data/static_unif_error_beta.txt};
      \addplot+[color=blue,mark=none,line width=1.5pt] table[x=t,y=e_phi1_und] {data/static_unif_error_beta.txt};
      \addplot+[color=ForestGreen,mark=none,line width=1.5pt] table[x=t,y=e_phi2] {data/static_unif_error_beta.txt};
      \legend{{$e_{\varphi_1}(t)$},{$\underline{e_{\varphi_1}}(t)$},{$e_{\varphi_2}(t)$}};
      \nextgroupplot[ylabel={$\beta(\Vtwon(t),\Wtwom)$},
        xlabel={$t$},
        ylabel style = {yshift=.1cm},
        axis line style = thick,
        grid=both,
        minor tick num=0,
        ymode = log,
        grid style = {gray,opacity=0.2},
        xmin=0, xmax=7,
        ymin=1e-06, ymax=1,
        ytick={1e-06,1e-04,1e-02,1e-00},
        xlabel style={font=\footnotesize},
        ylabel style={font=\footnotesize},
        x tick label style={font=\footnotesize},
        y tick label style={font=\footnotesize}]
      \addplot+[color=black,mark=none,line width=1.5pt] table[x=t,y=beta] {data/static_unif_error_beta.txt};
    \end{groupplot}
  \end{tikzpicture}
  \caption{\footnotesize Reconstruction with static, uniformly placed sensors. Left: evolution of the relative reconstruction error $e_{\varphi_1}(t)$ and the relative approximation error $e_{\varphi_2}(t)$ from \eqref{eq:rec_error}, and the best approximation error $\underline{e_{\varphi_1}}(t)$ from \eqref{eq:bestappr_error}. Right: evolution of the stability constant $\beta(\Vtwon(t),\Wtwom)$.}\label{fig:static_unif_error_beta}
\end{figure}

\begin{figure}[H]
  \centering
  \begin{tikzpicture}
    \begin{groupplot}[
        group style={group size=2 by 1,
            horizontal sep=3cm},
        width=6cm, height=5cm,
      ]
      \nextgroupplot[ylabel={},
        xlabel={$t$},
        ylabel style = {yshift=-.2cm},
        axis line style = thick,
        grid=both,
        minor tick num=0,
        minor tick style={draw=none},
        grid style = {gray,opacity=0.2},
        xmin=0, xmax=7,
        ymin=-200, ymax=600,
        xlabel style={font=\footnotesize},
        ylabel style={font=\footnotesize},
        x tick label style={font=\footnotesize},
        y tick label style={font=\footnotesize},
        legend style={font=\footnotesize},
        legend cell align={left},
        legend columns = 2,
        legend style={at={(0.5,1.3)},anchor=north}]
      \addplot+[color=red,mark=none,line width=1.5pt] table[x=t,y=Hustar] {data/static_unif_Hamiltonian.txt};
      \addplot+[color=blue,mark=none,line width=1.5pt] table[x=t,y=Hu] {data/static_unif_Hamiltonian.txt};
      \legend{{$\Hcal_{\theta^\dagger}(\varphi_1(\gamma(t)))$},{$\Hcal_{\theta^\dagger}(u^\dagger(t))$}};
      \nextgroupplot[ylabel={$e_\Hcal(t)$},
        xlabel={$t$},
        ylabel style = {yshift=.1cm},
        axis line style = thick,
        grid=both,
        minor tick num=0,
        ymode = log,
        grid style = {gray,opacity=0.2},
        xmin=0, xmax=7,
        ymin=1e-05, ymax=10,
        ytick={1e-08,1e-06,1e-04,1e-02,1e-00},
        xlabel style={font=\footnotesize},
        ylabel style={font=\footnotesize},
        x tick label style={font=\footnotesize},
        y tick label style={font=\footnotesize}]
      \addplot+[color=black,mark=none,line width=1.5pt] table[x=t,y=EH] {data/static_unif_Hamiltonian.txt};
    \end{groupplot}
  \end{tikzpicture}
  \caption{\footnotesize Reconstruction with static, uniformly placed sensors. Left: Hamiltonian evaluated at the exact solution $u^\dagger(t)$ and at the reconstructed solution $\varphi_1(\gamma(t))$. Right: evolution of the error in the Hamiltonian conservation $e_\Hcal(t)$ \eqref{eq:Ham_error}.}\label{fig:static_unif_Hamiltonian}
\end{figure}

\paragraph{Dynamic sensors.}
The quality of the reconstruction improves significantly when the sensors are allowed to move so that the value of the stability constant $\beta(\Vtwon(t),\Wtwom(t))$ is maximized. First, we show in \Cref{fig:dynamic_solution} the reconstructed solution at the final time obtained with dynamic sensors, and we observe a good qualitative agreement of both components $h$ and $\Phi$ of the reconstruction with the target function associated with the exact parameter.
\begin{figure}[H]
  \centering
  \makebox[\textwidth]{
    \includegraphics[width=1.2\linewidth]{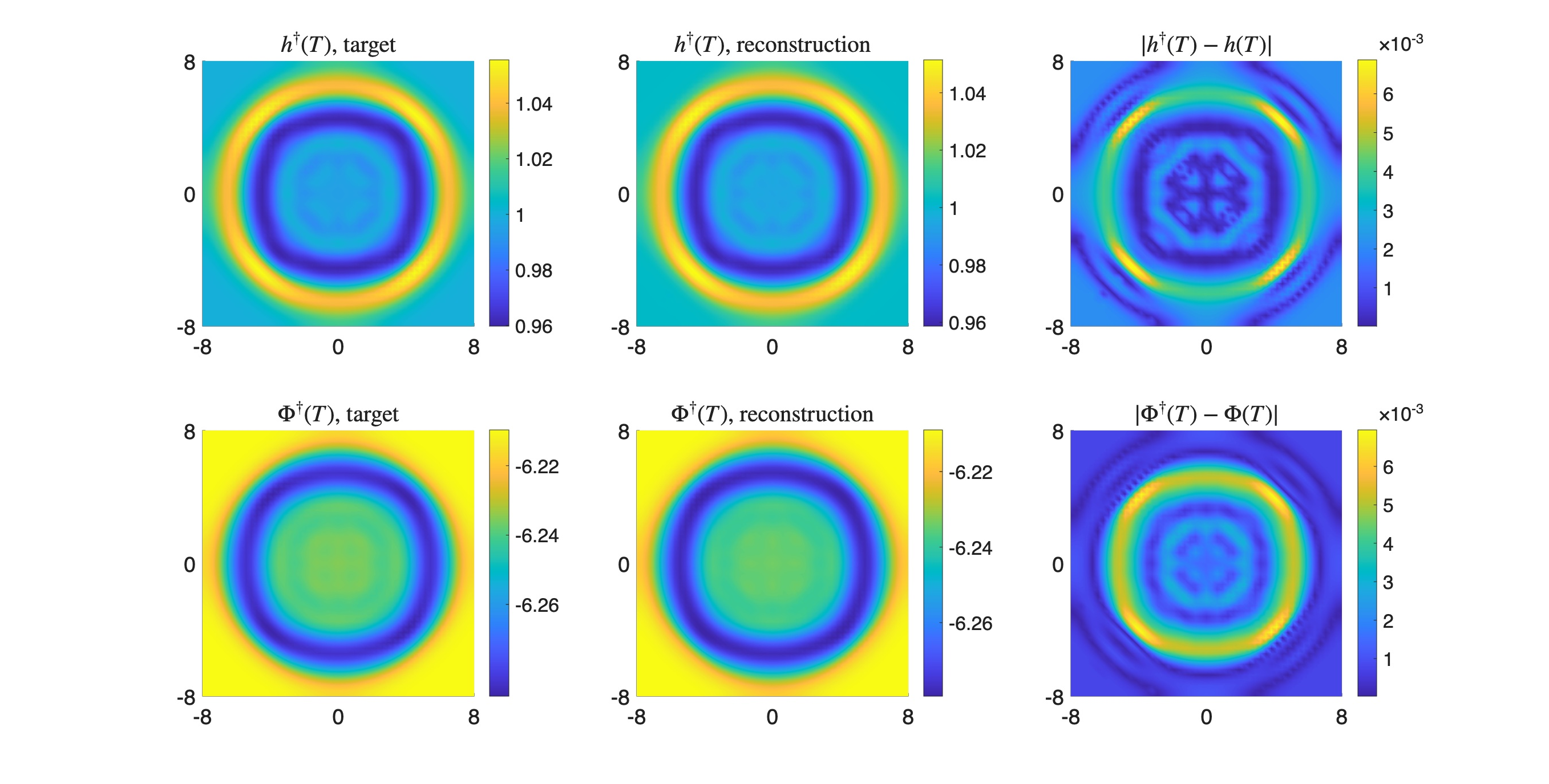}
  }
  \caption{\footnotesize Reconstruction with dynamic sensors. Comparison between the height $h^\dagger(T)$ and the velocity potential $\Phi^\dagger(T)$ associated with the exact parameter at the final time (left column) and the reconstructed quantities (center column), and absolute value of the difference between target and reconstruction (right column).}
  \label{fig:dynamic_solution}
\end{figure}

\Cref{fig:dynamic_error_beta} shows the evolution of the reconstruction error and of $\beta(\Vtwon(t),\Wtwom(t))$: the latter stays above $10^{-2}$ until the final time, while the former is close to the best approximation error at all times and is around $10^{-4}$ at the final time, that is, almost two orders of magnitude lower than in the case of static sensors. We also report the location of the sensors at four time instants during the simulation in \Cref{fig:dynamic_sensors}.
\begin{figure}[H]
  \centering
  \begin{tikzpicture}
    \begin{groupplot}[
        group style={group size=2 by 1,
            horizontal sep=3cm},
        width=6cm, height=5cm,
      ]
      \nextgroupplot[
        xlabel={$t$},
        ylabel style = {yshift=-.2cm},
        axis line style = thick,
        grid=both,
        minor tick num=0,
        minor tick style={draw=none},
        ytick={1e-08,1e-06,1e-04,1e-02,1e+00},
        ymode=log,
        grid style = {gray,opacity=0.2},
        xmin=0, xmax=7,
        ymin=1e-06, ymax=1e-00,
        xlabel style={font=\footnotesize},
        ylabel style={font=\footnotesize},
        x tick label style={font=\footnotesize},
        y tick label style={font=\footnotesize},
        legend style={font=\footnotesize},
        legend cell align={left},
        legend columns = 3,
        legend style={at={(0.5,1.25)},anchor=north}]
      \addplot+[color=red,mark=none,line width=1.5pt] table[x=t,y=e_phi1] {data/dynamic_error_beta.txt};
      \addplot+[color=blue,mark=none,line width=1.5pt] table[x=t,y=e_phi1_und] {data/dynamic_error_beta.txt};
      \addplot+[color=ForestGreen,mark=none,line width=1.5pt] table[x=t,y=e_phi2] {data/dynamic_error_beta.txt};
      \legend{{$e_{\varphi_1}(t)$},{$\underline{e_{\varphi_1}}(t)$},{$e_{\varphi_2}(t)$}};
      \nextgroupplot[ylabel={$\beta(\Vtwon(t),\Wtwom(t))$},
        xlabel={$t$},
        ylabel style = {yshift=.1cm},
        ytick={1e-08,1e-06,1e-04,1e-02,1e+00},
        ymode=log,
        axis line style = thick,
        grid=both,
        minor tick num=0,
        grid style = {gray,opacity=0.2},
        xmin=0, xmax=7,
        ymin=1e-06, ymax=1e+00,
        xlabel style={font=\footnotesize},
        ylabel style={font=\footnotesize},
        x tick label style={font=\footnotesize},
        y tick label style={font=\footnotesize}]
      \addplot+[color=black,mark=none,line width=1.5pt] table[x=t,y=beta] {data/dynamic_error_beta.txt};
    \end{groupplot}
  \end{tikzpicture}
  \caption{\footnotesize Reconstruction with dynamic sensors. Left: evolution of the relative reconstruction error $e_{\varphi_1}(t)$ and the relative approximation error $e_{\varphi_2}(t)$ from \eqref{eq:rec_error}, and the best approximation error $\underline{e_{\varphi_1}}(t)$ from \eqref{eq:bestappr_error}. Right: evolution of the stability constant $\beta(\Vtwon(t),\Wtwom)$.}\label{fig:dynamic_error_beta}
\end{figure}

\begin{figure}[H]
  \centering
  \includegraphics[width=0.8\linewidth]{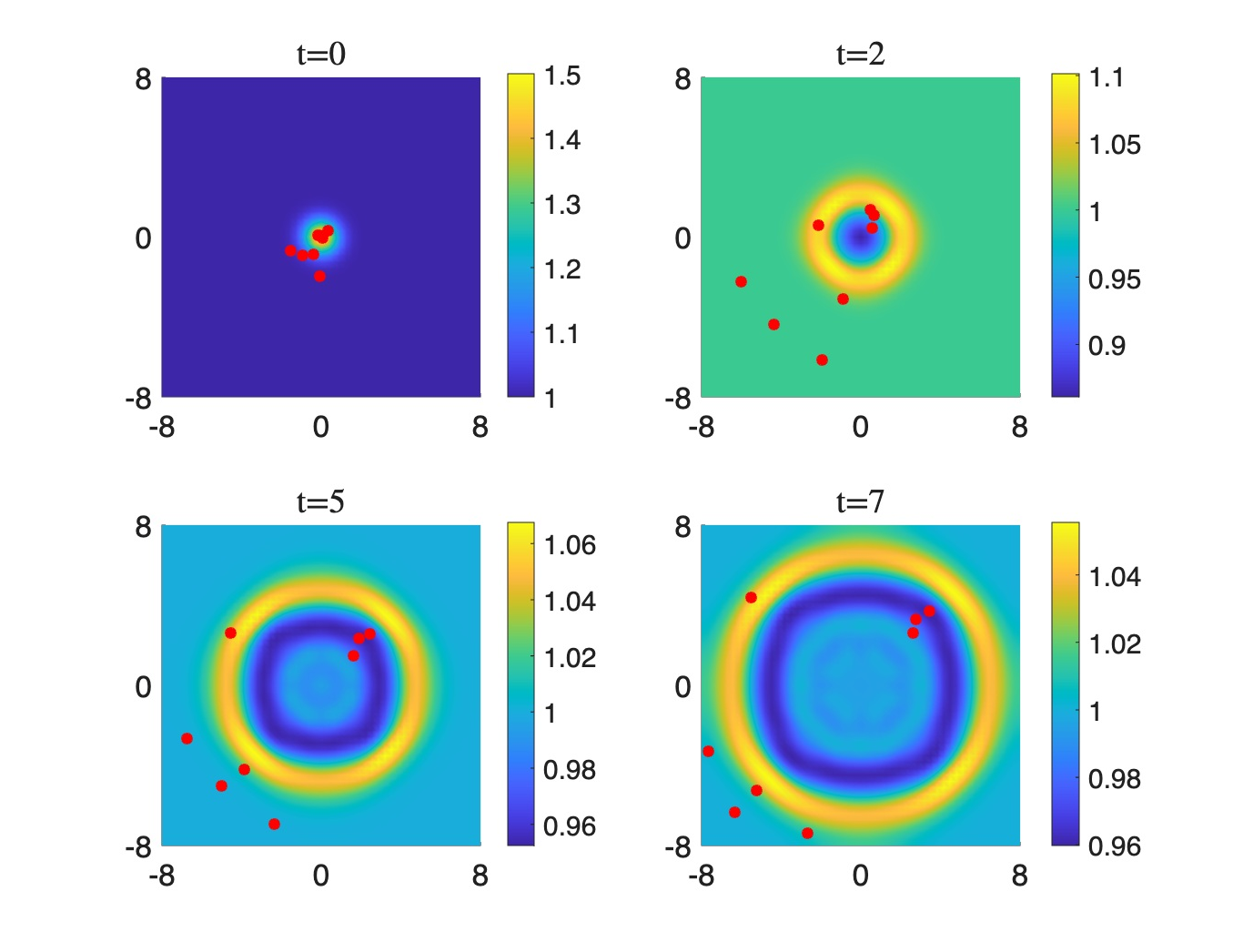}
  \caption{\footnotesize Reconstruction with dynamic sensors. Reconstructed height $h^\dagger(t)$ and location of the sensors (red dots) at four different time instants.}\label{fig:dynamic_sensors}
\end{figure}

Finally, we examine the evolution of the Hamiltonian, which is shown in \Cref{fig:dynamic_Hamiltonian}. We notice that the motion of the sensors allows for a better control of the Hamiltonian error $e_\Hcal$, which remains below $10^{-3}$ until the final stages of the simulation.

\begin{figure}[H]
  \centering
  \begin{tikzpicture}
    \begin{groupplot}[
        group style={group size=2 by 1,
            horizontal sep=3cm},
        width=6cm, height=5cm,
      ]
      \nextgroupplot[ylabel={},
        xlabel={$t$},
        ylabel style = {yshift=-.2cm},
        axis line style = thick,
        grid=both,
        minor tick num=0,
        minor tick style={draw=none},
        grid style = {gray,opacity=0.2},
        xmin=0, xmax=7,
        ymin=100, ymax=150,
        xlabel style={font=\footnotesize},
        ylabel style={font=\footnotesize},
        x tick label style={font=\footnotesize},
        y tick label style={font=\footnotesize},
        legend style={font=\footnotesize},
        legend cell align={left},
        legend columns = 2,
        legend style={at={(0.5,1.3)},anchor=north}]
      \addplot+[color=red,mark=none,line width=1.5pt] table[x=t,y=Hustar] {data/dynamic_Hamiltonian.txt};
      \addplot+[color=blue,mark=none,line width=1.5pt] table[x=t,y=Hu] {data/dynamic_Hamiltonian.txt};
      \legend{{$\Hcal_{\theta^\dagger}(\varphi_1(\gamma(t)))$},{$\Hcal_{\theta^\dagger}(u^\dagger(t))$}};
      \nextgroupplot[ylabel={$e_\Hcal(t)$},
        xlabel={$t$},
        ylabel style = {yshift=.1cm},
        axis line style = thick,
        grid=both,
        minor tick num=0,
        ymode = log,
        grid style = {gray,opacity=0.2},
        xmin=0, xmax=7,
        ymin=1e-08, ymax=1,
        ytick={1e-08,1e-06,1e-04,1e-02,1e-00},
        xlabel style={font=\footnotesize},
        ylabel style={font=\footnotesize},
        x tick label style={font=\footnotesize},
        y tick label style={font=\footnotesize}]
      \addplot+[color=black,mark=none,line width=1.5pt] table[x=t,y=EH] {data/dynamic_Hamiltonian.txt};
    \end{groupplot}
  \end{tikzpicture}
  \caption{\footnotesize Reconstruction with dynamic sensors. Left: Hamiltonian evaluated at the exact solution $u^\dagger(t)$ and at the reconstructed solution $\varphi_1(\gamma(t))$. Right: evolution of the error in the Hamiltonian conservation $e_\Hcal(t)$ from \eqref{eq:Ham_error}.}\label{fig:dynamic_Hamiltonian}
\end{figure}

\section{Conclusions}
\label{sec:conclusion}
In this work we presented an adaptive framework for reconstructing time-dependent solutions of parametric evolution problems from a finite number of linear measurements, with a particular focus on the case of Hamiltonian systems. The proposed methodology incorporates the information obtained from the measurements into the online evolution of the approximation space.
A rigorous error analysis established the central role of the stability constant $\beta$ in the reconstruction accuracy, and motivated the development of a dynamical strategy to adapt the location of the sensors building on previous work by the authors.

Beyond its data-driven nature, the proposed framework is able to incorporate intrinsic physical properties of the underlying Hamiltonian dynamics. We showed that the reconstruction belongs to a symplectic space, and it preserves the energy up to the approximation error. Numerical experiments on the two-dimensional shallow water equations demonstrated the practical effectiveness of the method and illustrated the benefits of adaptive sensor placement over strategies based on static observations.

Several directions remain open for future investigation. A natural extension of the current work is to augment the framework with a parameter estimation procedure, which would allow one to evolve the probability distribution over the parameter space. Another possible direction is the extension of our methodology beyond linear observations to account for nonlinear measurement operators.

\section*{Acknowledgments}
Co-funded by the European Union (ERC, COCOA, 101170147). Views and opinions expressed are however those of the author(s) only and do not necessarily reflect those of the European Union or the European Research Council. Neither the European Union nor the granting authority can be held responsible for them.

\bibliographystyle{plain}
\bibliography{references}

\appendix
\section{Proof of \Cref{thm:dynamics-gamma-2}}\label{app:proof-gamma-dyn}

We first state some auxiliary lemmas. The first one applies to any pair of subspaces $\mathbf{V}$ and $\mathbf{W}$ of $\fH$ (not necessarily symplectic), and it provides an expression for the orthogonal projection onto $\mathbf{W}\cap\mathbf{V}^\perp$.
\begin{lemma}\label{lem:PWVperp}
  Let $\mathbf{V}$ and $\mathbf{W}$ be finite dimensional subspaces of a Hilbert space $\fH$, and let $V=(v_1,\dots,v_n)$ and $(w_1,\dots,w_m)$ be bases of $\mathbf{V}$ and $\mathbf{W}$, respectively. Assume that $\fV\cap\fW^\top=\{0\}$. Then
  \begin{equation*}
    \proj{\fW\cap\fV^\perp} u = \proj{\fW} u - \sum_{i,j=1}^{n}\Gbb(V,\proj{\fW}V)^{-1}_{i,j}\inner{u, \proj{\fW} v_j}_{\fH}\proj{\fW} v_i \qquad \forall u\in\fH.
  \end{equation*}
\end{lemma}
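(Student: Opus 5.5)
The plan is to identify $\proj{\fW\cap\fV^\perp}$ as $\proj{\fW}$ minus the orthogonal projection onto a complementary subspace of $\fW$. The hypothesis is presumably meant as $\fV\cap\fW^\perp=\{0\}$, which is equivalent to $\beta(\fV,\fW)>0$ in finite dimensions. Set $\fW_V\coloneqq\proj{\fW}\fV=\vspan\{\proj{\fW}v_1,\dots,\proj{\fW}v_n\}\subset\fW$.

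\textbf{Step 1: splitting $\fW$.} Since $\fW$ is finite dimensional, $\fW=\fW_V\oplus(\fW\ominus\fW_V)$ orthogonally. For $w\in\fW$, self-adjointness of $\proj{\fW}$ gives $\inner{w,v_j}_{\fH}=\inner{w,\proj{\fW}v_j}_{\fH}$. Hence $w\perp\fV$ if and only if $w\perp\fW_V$, so $\fW\cap\fV^\perp=\fW\ominus\fW_V$. This yields
\begin{equation*}
\proj{\fW\cap\fV^\perp}=\proj{\fW}-\proj{\fW_V}.
\end{equation*}

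\textbf{Step 2: projection onto $\fW_V$.} The Gram matrix of the family $\{\proj{\fW}v_i\}$ is $\inner{\proj{\fW}v_i,\proj{\fW}v_j}_{\fH}=\inner{v_i,\proj{\fW}v_j}_{\fH}=\Gbb(V,\proj{\fW}V)_{i,j}$, using again that $\proj{\fW}$ is self-adjoint and idempotent. This matrix is invertible exactly when the family is linearly independent. That holds because $\sum_i z_i\proj{\fW}v_i=0$ means $\proj{\fW}(\sum_i z_iv_i)=0$, i.e. $\sum_i z_iv_i\in\fV\cap\fW^\perp=\{0\}$, and then $z=0$ since $V$ is a basis.

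With an invertible Gram matrix, the standard normal-equation formula gives
\begin{equation*}
\proj{\fW_V}u=\sum_{i,j=1}^n\Gbb(V,\proj{\fW}V)^{-1}_{i,j}\inner{u,\proj{\fW}v_j}_{\fH}\proj{\fW}v_i.
\end{equation*}
To check it, write $\proj{\fW_V}u=\sum_i x_i\proj{\fW}v_i$ and impose orthogonality of the residual to each $\proj{\fW}v_j$. Since $\Gbb$ is symmetric, the index order of the inverse does not matter.

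\textbf{Step 3: conclusion.} Combining Steps 1 and 2 gives the claimed formula. The only delicate point is the invertibility of $\Gbb$ in Step 2, which is where the hypothesis, read as $\fV\cap\fW^\perp=\{0\}$, is used. The remaining steps are routine Hilbert-space identities.
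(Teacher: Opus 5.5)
Your proof is correct, and it organizes the argument differently from the paper. The paper takes the right-hand side $\widetilde u$ as a candidate and checks the two defining properties of the orthogonal projection directly. First, $\widetilde u\in\fW\cap\fV^\perp$, which it gets by computing $\inner{\widetilde u,v_k}_{\fH}=0$. Second, $\inner{u-\widetilde u,z}_{\fH}=0$ for every $z\in\fW\cap\fV^\perp$, using $\inner{\proj{\fW}v_i,z}_{\fH}=\inner{v_i,z}_{\fH}=0$.

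You instead derive the formula. You identify $\fW\cap\fV^\perp$ as the orthogonal complement of $\proj{\fW}\fV$ inside $\fW$, so the subtracted term is recognized as the orthogonal projection of $u$ onto $\proj{\fW}\fV$. You then compute that projection through normal equations whose Gram matrix is exactly $\Gbb(V,\proj{\fW}V)$.

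Both arguments rest on the same identity $\inner{w,v_j}_{\fH}=\inner{w,\proj{\fW}v_j}_{\fH}$ for $w\in\fW$. The paper's verification is shorter and needs no auxiliary subspace, but it does not explain where the formula comes from. Your route exhibits the orthogonal splitting $\fW=\proj{\fW}\fV\oplus(\fW\cap\fV^\perp)$, which makes the formula transparent. It also actually proves that $\Gbb(V,\proj{\fW}V)$ is invertible, whereas the paper only asserts this. Your argument shows that invertibility is equivalent to the hypothesis once $\fW^\top$ is read as $\fW^\perp$, as you correctly did.
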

\begin{proof}
  The assumption that $\fV\cap\fW^\top=\{0\}$ ensures that the matrix $\Gbb(V,\proj{\fW}V)$ is invertible.
  Since $\fW\cap\fV^\perp$ is a closed subspace of $\fH$, every element $u\in\fH$ has a unique projection onto $\fW\cap\fV^\perp$. Let $\widetilde{u}=\proj{\fW} u - \sum_{i,j=1}^{n}\Gbb(V,\proj{\fW}V)^{-1}_{i,j}\inner{u, \proj{\fW} v_j}_{\fH}\proj{\fW} v_i$. Clearly $\widetilde{u}\in\fW$. Moreover, $\widetilde{u}\in\fV^\perp$ since
  \begin{equation*}
    \inner{\widetilde{u}, v_k}_{\fH} = \inner{\proj{\fW}u, v_k}_{\fH} - \sum_{j=1}^{n}(\Gbb(V,\proj{\fW}V)\Gbb(V,\proj{\fW}V)^{-1})_{j,k}\inner{u,\proj{\fW}v_j}_{\fH} = 0 \qquad \forall 1\leq k\leq n.
  \end{equation*}
  Therefore $\widetilde{u}\in\fW\cap\fV^\perp$. Now let $z\in\fW\cap\fV^\perp$. We have that
  \begin{equation*}
    \inner{u-\widetilde{u},z}_{\fH} = \inner{u-\proj{\fW}u, z}_{\fH} + \sum_{i,j=1}^n\Gbb(V,\proj{\fW}V)_{i,j}^{-1}\inner{u, \proj{\fW}v_j}_{\fH}\inner{\proj{\fW}v_i, z}_{\fH} = 0
  \end{equation*}
  since $z\in W$ and $\inner{\proj{\fW}v_i, z}_{\fH}=\inner{v_i, \proj{\fW}z}_{\fH}=\inner{v_i, z}_{\fH}=0$. Since this holds for an arbitrary $z\in\fW\cap\fV^\perp$, we conclude that $\widetilde{u} = \proj{\fW\cap\fV^\perp}u$.
\end{proof}

Next we show that the operator $\cJ$ defined in \eqref{eq:J} commutes with orthogonal projections onto symplectic vector spaces.
\begin{lemma}\label{lem:Jcommutes}
  Let $\Vtwon$ be a symplectic vector subspace of a Hilbert space $\fH$. Then
  \begin{equation*}
    \cJ\circ\proj{\Vtwon} = \proj{\Vtwon}\circ\cJ.
  \end{equation*}
\end{lemma}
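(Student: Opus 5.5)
The plan is to show that $\Vtwon$ is invariant under $\cJ$, and then use that $\cJ$ is an isometry (indeed unitary) to conclude that it commutes with the orthogonal projection. Throughout, we interpret ``symplectic vector subspace'' as in the rest of the paper. That is, $\Vtwon=\vspan\{V\}$ for an orthosymplectic basis $V=(v_1,\dots,v_{2n})\in\cV_{2n}$, so that $\inner{v_i,v_j}_{\fH}=\delta_{i,j}$ and $\inner{\cJ v_i,v_j}_{\fH}=(\bJ_{2n})_{i,j}$.

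\textbf{Step 1: $\cJ\Vtwon\subseteq\Vtwon$.} For each $i$, expand $\cJ v_i=\proj{\Vtwon}\cJ v_i+\proj{\Vtwon^\perp}\cJ v_i$. The coefficients of the first part are $\inner{\cJ v_i,v_j}_{\fH}=(\bJ_{2n})_{i,j}$, and each row of $\bJ_{2n}$ has exactly one nonzero entry, equal to $\pm1$. Hence $\norm{\proj{\Vtwon}\cJ v_i}_{\fH}^2=\sum_j(\bJ_{2n})_{i,j}^2=1$. On the other hand, $\cJ$ is an isometry: from \eqref{eq:J} and the product inner product we have $\norm{\cJ v}_{\fH}=\norm{v}_{\fH}$. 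Therefore $\norm{\cJ v_i}_{\fH}=1$, and Pythagoras forces $\proj{\Vtwon^\perp}\cJ v_i=0$. Explicitly, $\cJ v_i=\sum_j(\bJ_{2n})_{i,j}v_j\in\Vtwon$. By linearity, $\cJ\Vtwon\subseteq\Vtwon$, and since $\cJ^2=-\id$ we in fact get equality.

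\textbf{Step 2: $\cJ\Vtwon^\perp\subseteq\Vtwon^\perp$.} Take $w\in\Vtwon^\perp$ and $v\in\Vtwon$. Skew-adjointness of $\cJ$ gives $\inner{\cJ w,v}_{\fH}=-\inner{w,\cJ v}_{\fH}=0$, because $\cJ v\in\Vtwon$ by Step 1.

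\textbf{Step 3: conclusion.} For arbitrary $u\in\fH$, write $u=\proj{\Vtwon}u+\proj{\Vtwon^\perp}u$. Then $\cJ u=\cJ\proj{\Vtwon}u+\cJ\proj{\Vtwon^\perp}u$, where the first term lies in $\Vtwon$ by Step 1 and the second in $\Vtwon^\perp$ by Step 2. By uniqueness of the orthogonal decomposition, $\proj{\Vtwon}\cJ u=\cJ\proj{\Vtwon}u$.

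The only step with real content is Step 1. If ``symplectic subspace'' were meant merely as nondegeneracy of $\omega$ restricted to $\Vtwon$, $\cJ$-invariance would fail in general and so would the lemma. I would therefore state explicitly that the orthosymplectic-basis meaning is used, which is consistent with how the lemma is applied to $\Vtwon(t)$ and, by the analogous explicit construction with $W(t)$, to $\Wtwom(t)$. For the latter, $\cJ$-invariance is immediate from the definition of $\rw_i$.
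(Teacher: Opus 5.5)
Your proof is correct, but it is organized differently from the paper's. The paper fixes an orthosymplectic basis and takes the relation $v_i=\cJ v_{i+n}$ ($1\le i\le n$) as known. It then checks the identity by a direct index computation: it expands $\cJ\proj{\Vtwon}u=\sum_i\inner{u,v_i}_{\fH}\cJ v_i$, pairs the indices $i$ and $i+n$, and uses skew-adjointness to rewrite the sum as $\sum_i\inner{\cJ u,v_i}_{\fH}v_i=\proj{\Vtwon}\cJ u$.

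You instead split the claim into two parts. The first is the structural fact that $\Vtwon$ is $\cJ$-invariant. The second is the general fact that a skew-adjoint operator preserving a closed subspace also preserves its orthogonal complement, and therefore commutes with the orthogonal projection.

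This buys three things. First, your Step 1 derives, from the definition of $\cV_{2n}$ via the isometry and Pythagoras, the basis relation that the paper uses without justification. With the definitions of $\omega$ and $\cV_{2n}$ as written, it comes out as $\cJ v_i=\sum_j(\bJ_{2n})_{i,j}v_j$, i.e. $v_i=-\cJ v_{i+n}$, which is opposite in sign to the paper's $v_i=\cJ v_{i+n}$; the commutation is insensitive to this sign. Second, your route isolates $\cJ$-invariance as exactly the hypothesis needed. This makes your caveat precise: the caveat is right, since nondegeneracy of $\omega$ on $\Vtwon$ alone does not give $\cJ$-invariance, and the paper's proof also tacitly relies on an orthosymplectic basis. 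Third, your argument applies verbatim to $\Wtwom(t)$, whose spanning family $W(t)$ is not orthonormal. The paper's computation would first require constructing an orthosymplectic basis of $\Wtwom(t)$.

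The paper's version is simply shorter once the basis relation is granted.
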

\begin{proof}
  Let $V=(v_1,\dots,v_{2n})$ be a $\fH$-orthonormal basis of $\Vtwon$. Using the fact that $\cJ$ is skew-symmetric and $v_i=\cJ v_{i+n}$ for any $1\leq i\leq n$, we have that, for any $u\in\fH$,
  \begin{equation*}
    \begin{aligned}
      \cJ\proj{\Vtwon} u
       & =	\sum_{i=1}^{2n}\inner{u,v_i}_{\fH} \cJ v_i
      = \sum_{i=1}^{n}\prt*{-\inner{u,\cJ v_{i+n}}_{\fH} v_{i+n}-\inner{u,\cJ v_{i}}_{\fH} v_i} \\
       & =\sum_{i=1}^{2n}-\inner{u,\cJ v_i}_{\fH} v_i
      =\sum_{i=1}^{2n}\inner{\cJ u,v_i}_{\fH} v_i = \proj{\Vtwon}\cJ u.
    \end{aligned}
  \end{equation*}
\end{proof}

Finally, we show that $\cJ$ also commutes with orthogonal projections onto the intersection $\Wtwom\cap\Vtwon^\perp$ when $\Wtwom$ and $\Vtwon$ are symplectic vector spaces.
\begin{lemma}\label{lem:Jcommutes_2}
  Let $\Vtwon$ and $\Wtwom$ be symplectic vector subspaces of a Hilbert space $\fH$. Then
  \begin{equation*}
    \cJ\circ\proj{\Wtwom\cap\Vtwon^\perp} = \proj{\Wtwom\cap\Vtwon^\perp} \circ\cJ
  \end{equation*}
\end{lemma}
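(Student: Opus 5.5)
The plan is to show that $\Wtwom\cap\Vtwon^\perp$ is itself a $\cJ$-invariant closed subspace. Then we prove the general fact that an orthogonal projection onto a $\cJ$-invariant closed subspace commutes with $\cJ$.

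\textbf{Invariance of the intersection.} Since $\Vtwon$ is symplectic and spanned by an orthosymplectic basis, \Cref{lem:Jcommutes} gives $\cJ\proj{\Vtwon}=\proj{\Vtwon}\cJ$. This implies $\cJ\Vtwon\subseteq\Vtwon$. The orthogonal complement is invariant as well: for $x\in\Vtwon^\perp$ and $v\in\Vtwon$, skew-adjointness gives
\[
\inner{\cJ x,v}_{\fH}=-\inner{x,\cJ v}_{\fH}=0,
\]
because $\cJ v\in\Vtwon$. By the same argument applied to $\Wtwom$, which is symplectic and so admits an orthosymplectic basis, \Cref{lem:Jcommutes} yields $\cJ\Wtwom\subseteq\Wtwom$. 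Hence $\cJ(\Wtwom\cap\Vtwon^\perp)\subseteq\Wtwom\cap\Vtwon^\perp$. The intersection is closed, being finite-dimensional.

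\textbf{Commutation.} Let $\fV\coloneqq\Wtwom\cap\Vtwon^\perp$ and fix $u\in\fH$. Decompose $u=\proj{\fV}u+(u-\proj{\fV}u)$, with the second term in $\fV^\perp$. Applying $\cJ$ gives $\cJ u=\cJ\proj{\fV}u+\cJ(u-\proj{\fV}u)$. The first term lies in $\fV$ by invariance. The second lies in $\fV^\perp$, by the skew-adjointness argument above with $\fV$ in place of $\Vtwon$. Uniqueness of the orthogonal decomposition then yields $\proj{\fV}\cJ u=\cJ\proj{\fV}u$.

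\textbf{Main obstacle.} The only delicate point is the claim that a symplectic subspace $\Wtwom$ is $\cJ$-invariant, which is what \Cref{lem:Jcommutes} needs. That lemma assumes an $\fH$-orthonormal basis satisfying $v_i=\cJ v_{i+n}$, i.e.\ an orthosymplectic basis. The paper's notion of symplectic subspace is in this sense, and for the sensor space of \Cref{sec:moving-sensors} one checks directly that $\cJ(\widetilde{\rw}_i,0)=(0,-\widetilde{\rw}_i)$, so invariance is immediate there. I would state explicitly that "symplectic subspace" is understood as admitting an orthosymplectic basis, equivalently being $\cJ$-invariant, and then invoke \Cref{lem:Jcommutes}.
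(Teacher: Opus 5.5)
Your argument is correct, but it takes a different route from the paper.

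\textbf{The paper's route.} The paper works with the explicit representation of $\proj{\Wtwom\cap\Vtwon^\perp}$ from \Cref{lem:PWVperp}: $\proj{\Wtwom}$ minus a correction built from $\Gbb(V,\proj{\Wtwom}V)^{-1}$ and the vectors $\proj{\Wtwom}v_i$. It moves $\cJ$ through $\proj{\Wtwom}$ using \Cref{lem:Jcommutes}, rewrites $\cJ v_j=\sum_k(\bJ_{2n})_{k,j}v_k$, and closes with the matrix identity $\bJ_{2n}^\top\Gbb(V,\proj{\Wtwom}V)\bJ_{2n}=\Gbb(V,\proj{\Wtwom}V)$.

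\textbf{Your route.} You argue structurally. Both subspaces are $\cJ$-invariant, and skew-adjointness passes invariance to orthogonal complements. Hence $\Wtwom\cap\Vtwon^\perp$ and its complement are both invariant, and commutation follows from uniqueness of the orthogonal decomposition.

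\textbf{What each buys.} \Cref{lem:PWVperp} requires $\Gbb(V,\proj{\Wtwom}V)$ to be invertible, i.e.\ $\Vtwon\cap\Wtwom^\perp=\{0\}$, equivalently $\beta(\Vtwon,\Wtwom)>0$. This condition is absent from the statement of the lemma and is only supplied by the context of \Cref{thm:dynamics-gamma-2}. Your proof establishes the lemma exactly as stated, with no such hypothesis. It also shows more generally that the orthogonal projection onto any closed $\cJ$-invariant subspace commutes with $\cJ$. The paper's computation, in exchange, stays inside the Gram-matrix and $\bJ_{2n}$ formalism that is reused in the proof of \Cref{thm:dynamics-gamma-2}.

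\textbf{Your caveat on ``symplectic subspace''.} It is well placed and applies equally to the paper, which also invokes \Cref{lem:Jcommutes} for $\Wtwom$. If one only asked $\omega$ to be nondegenerate on the subspace, $\cJ$-invariance could fail. For example, the span of $(e_1,0)$ and $(e_2,e_1)$ in $\bR^2\times\bR^2$ is nondegenerate for $\omega$ but not $\cJ$-invariant. So the orthosymplectic-basis reading is genuinely needed.
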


\begin{proof}
  Let $V=(v_1,\dots,v_{2n})$ be an orthosymplectic basis of $\Vtwon$. Recall that this implies that $\cJ v_j=\sum_{k=1}^{2n}v_k(\bJ_{2n})_{k,j}$ for all $1\leq j\leq 2n$. Using \Cref{lem:PWVperp}, \Cref{lem:Jcommutes} and orthosymplecticity of $V$ we obtain
  \begin{align*}
    \cJ\proj{\Wtwom\cap\Vtwon^\perp}u & = \cJ\proj{\Wtwom}u-\sum_{i,j=1}^{2n}(\Gbb(V,\proj{\Wtwom}V)^{-1})_{i,j}\inprodV{u}{\proj{\Wtwom}v_j}\cJ\proj{\Wtwom}v_i \\& = \proj{\Wtwom}\cJ u-\sum_{i,j=1}^{2n}(\Gbb(V,\proj{\Wtwom}V)^{-1})_{i,j}\inprodV{\cJ u}{\proj{\Wtwom}\cJ v_j}\proj{\Wtwom}\cJ v_i \\&= \proj{\Wtwom}\cJ u-\sum_{k,\ell=1}^{2n}(\bJ_{2n}^\top\Gbb(V,\proj{\Wtwom}V)^{-1}\bJ_{2n})_{k,\ell}\inprodV{\cJ u}{\proj{\Wtwom} v_\ell}\proj{\Wtwom} v_k \\&= \proj{\Wtwom}\cJ u-\sum_{k,\ell=1}^{2n}(\Gbb(V,\proj{\Wtwom}V)^{-1})_{k,\ell}\inprodV{\cJ u}{\proj{\Wtwom}v_\ell}\proj{\Wtwom}v_k \\&= \proj{\Wtwom\cap\Vtwon^\perp}\cJ u
  \end{align*}
  for a generic $u\in\fH$. In the second to last equality we used the fact that $\bJ_{2n}^\top\Gbb(V,\proj{\Wtwom}V)\bJ_{2n}=\Gbb(V,\proj{\Wtwom}V)$, which follows from the definition of $\Gbb(V,\proj{\Wtwom}V)$ \eqref{eq:GVPWV}, orthosymplecticity of $V$ and \Cref{lem:Jcommutes}.
\end{proof}

\begin{proof}[Proof of \Cref{thm:dynamics-gamma-2}]
  We show that the system of equations \eqref{eq:dynamics-dot-2} follows from the system \eqref{eq:dynamics-dot} when the observation space $\Wtwom$ is a symplectic vector space. In this section we omit any dependence on time for simplicity of notation.

  As first thing, we isolate in \eqref{eq:dynamics-dot-V-2} all terms that depend on $\dot{V}$. To this aim we re-write $\chi$ in \eqref{eq:chi} using the expression for $\dot{a}$ in \eqref{eq:dynamics-dot-a}; thereby
  \begin{equation}\label{eq:adot-1}
    \begin{aligned}
      \dot{a}_i & = \sum_{j=1}^{2n}(\Gbb(V,\proj{\Wtwom} V)^{-1})_{i,j}\left(\inner{\proj{\Wtwom}\dot{u}^\dagger, v_j}_{\fH} - \inner{v_j, \proj{\Wtwom}\Big(\sum_{k=1}^{2n}a_k\dot{v}_k\Big)}_{\fH}\right) \\
                & = \sum_{j=1}^{2n}(\Gbb(V,\proj{\Wtwom} V)^{-1})_{i,j}\inner*{\dot{u}^\dagger-\sum_{k=1}^{2n}a_k\dot{v}_k, \proj{\Wtwom} v_j}_{\fH}.
    \end{aligned}
  \end{equation}
  Using this expression and \Cref{lem:PWVperp} we can write
  \begin{align*}
    \proj{\Vtwon^\perp}\proj{\Wtwom}\left(\sum_{i=1}^{2n}\dot{a}_iv_i\right) & = \proj{\Vtwon^\perp}\sum_{i,j=1}^{2n}(\Gbb(V,\proj{\Wtwom} V)^{-1})_{i,j}\inner{\dot{u}^\dagger - \sum_{k=1}^{2n}a_k\dot{v}_k, \proj{\Wtwom} v_j}_{\fH}\proj{\Wtwom} v_i               \\
                                                                             & =\proj{\Vtwon^\perp}\left(\proj{\Wtwom} - \proj{\Wtwom\cap\Vtwon^\perp}\right)\left(\dot{u}^\dagger - \sum_{k=1}^{2n}a_k\dot{v}_k\right)                                                \\
                                                                             & = \proj{\Vtwon^\perp}\proj{\Wtwom}\left(\dot{u}^\dagger - \sum_{k=1}^{2n}a_k\dot{v}_k\right) - \proj{\Wtwom\cap\Vtwon^\perp}\left(\dot{u}^\dagger - \sum_{k=1}^{2n}a_k\dot{v}_k\right).
  \end{align*}
  We then use this identity to expand the definition of the quantity $\chi$ defined in \eqref{eq:chi}:
  \begin{equation*}
    \chi
    =a \proj{\Wtwom\cap\Vtwon^\perp}\dot{u}^\dagger - aa^\top\proj{\Wtwom\cap\Vtwon^\perp}\dot{V} + \lambda\proj{\Vtwon^\perp}\int_{\fTheta} c(\theta)\cJ\prt{\grad_{\fH} \cH_\theta(\varphi_2)}\,\mu(\d\theta).
  \end{equation*}
  We recall that we are using the notation $a^\top V:=\sum_{i=1}^{2n}a_iv_i$ for generic $a\in\bR^{2n}$ and $V\in\fH^{2n}$.
  Moreover, using \Cref{lem:Jcommutes}, \Cref{lem:Jcommutes_2} and the symplecticity of $V\in\cV_{2n}$,
  \begin{align*}
    \bJ_{2n}\cJ s = \bJ_{2n}a\big(\cJ\proj{\Wtwom\cap\Vtwon^\perp}\dot{u}^\dagger - (\bJ_{2n}a)^\top\proj{\Wtwom\cap\Vtwon^\perp}\dot{V}\big) - \lambda\proj{\Vtwon^\perp}\int_{\fTheta}\bJ_{2n}c(\theta)\grad_{\fH} \cH_\theta(\varphi_2)\,\mu(d\theta).
  \end{align*}
  Combining the last two expressions
  and inserting into the evolution \eqref{eq:dynamics-dot-V} of $V$
  yields
  \begin{equation}\label{eq:Vdot-1}
    \bS(c)\dot{V} + \lambda^{-1}\bM(a)\proj{\Wtwom\cap\Vtwon^\perp}\dot{V} =  f(\gamma) + g(\gamma,\dot{u}^{\dagger}),
  \end{equation}
  where $\bM(a):=aa^\top + \bJ_{2n}aa^\top \bJ_{2n}^\top$, $g$ is defined as
  \begin{equation}\label{eq:g}
    g_i(\gamma,\dot{u}^{\dagger}):=\lambda^{-1}\left(a_i + (\bJ_{2n}a)_i \cJ\right)\proj{\Wtwom\cap\Vtwon^\perp}\dot{u}^\dagger \quad \forall 1\leq i\leq 2n,
  \end{equation}
  and $f$ is given in \eqref{eq:f}.
  From the last equation we can obtain an expression for $\dot{V}$ as follows: we write $\dot{V} = \proj{\Wtwom\cap\Vtwon^\perp}\dot{V} + \proj{(\Wtwom\cap\Vtwon^\perp)^\perp}\dot{V}$ and we project \eqref{eq:Vdot-1} onto $\Wtwom\cap\Vtwon^\perp$ and $(\Wtwom\cap\Vtwon^\perp)^\perp$, to find
  \begin{align*}
     & \left(\bS(c)+\lambda^{-1}\bM(a)\right)\proj{\Wtwom\cap\Vtwon^\perp}\dot{V} = \proj{\Wtwom\cap\Vtwon^\perp} \big(f(\gamma) + g(\gamma,\dot{u}^{\dagger})\big), \\
     & \bS(c)\proj{(\Wtwom\cap\Vtwon^\perp)^\perp}\dot{V} = \proj{(\Wtwom\cap\Vtwon^\perp)^\perp} \big(f(\gamma) + g(\gamma,\dot{u}^{\dagger})\big).
  \end{align*}
  If $\bS(c)$ is nonsingular for any $c$, then $\bS(c)+\lambda^{-1}\bM(a)$ is nonsingular for any $\lambda>0$. In particular, with $\alpha(\lambda,a):=\lambda+a^\top \bS(c)^{-1}a$, the Woodbury matrix identity gives
  \begin{equation*}
    \proj{\Wtwom\cap\Vtwon^\perp}\dot{V} =
    \big(\bS(c)^{-1}- \alpha(\lambda,a)^{-1} \bS(c)^{-1}\bM(a)\bS(c)^{-1}\big)
    \proj{\Wtwom\cap\Vtwon^\perp} \big(f(\gamma) + g(\gamma,\dot{u}^{\dagger})\big).
  \end{equation*}
  Hence, since $\proj{\Wtwom\cap\Vtwon^\perp}g = g$, it holds
  \begin{equation}\label{eq:Vdot_temp}
    \dot{V} =
    \bS(c)^{-1} \big(f(\gamma) + g(\gamma,\dot{u}^{\dagger})\big)-\alpha(\lambda,a)^{-1} \bS(c)^{-1}\bM(a)\bS(c)^{-1}\big(\proj{\Wtwom\cap\Vtwon^\perp} f(\gamma) + g(\gamma,\dot{u}^{\dagger})\big).
  \end{equation}

  It can be easily verified that the matrix $\Sbb(c)=\Cbb(c)+\bJ_{2n}^\top\Cbb(c)\bJ_{2n}$ is symmetric and skew-Hamiltonian for all $c$, that is, $\Sbb(c)^\top = \Sbb(c)$ and $\bJ_{2n}\Sbb(c) = \Sbb(c)\bJ_{2n}$.
  These properties imply that
  $v^\top\bS(c)^{-1}\bJ_{2n} v = 0$ for any $v\in\bR^{2n\times 2n}$,
  which, together with the definition of $\bM(a)$, gives
  \begin{equation*} \bM(a)\bS(c)^{-1}(a+(\bJ_{2n}a)\cJ) = (a^\top\bS(c)^{-1}a)(a+(\bJ_{2n}a)\cJ).
  \end{equation*}
  Inserting
  the expression of $g$ given in \eqref{eq:g} into \eqref{eq:Vdot_temp} and using the latter identity yields
  \begin{equation*}
    \dot{V} = \bS(c)^{-1}\left(
    f(\gamma)+\alpha(\lambda,a)^{-1}
    (a+(\bJ_{2n}a)\cJ)\proj{\Wtwom\cap\Vtwon^\perp}\dot{u}^\dagger - \alpha(\lambda,a)^{-1}\bM(a)\bS(c)^{-1}\proj{\Wtwom\cap\Vtwon^\perp} f(\gamma)\right).
  \end{equation*}
  Moreover, using again the properties of $\Sbb(c)$ and the fact that $\cJ f_i = \sum_{j=1}^{2n}(\bJ_{2n}^\top)_{i,j}f_j$ for all $1\leq i\leq2n$, it holds
  \begin{equation*}
    \sum_{i=1}^{2n}\left(\bS(c)^{-1}\bJ_{2n}a\right)_i f_i = \sum_{i=1}^{2n} \left(\bS(c)^{-1}a\right)_i\cJ f_i,
  \end{equation*}
  which allows us to write $\bM(a)\bS(c)^{-1}\proj{\Wtwom\cap\Vtwon^\perp}f = (a+(\bJ_{2n}a)\cJ)\proj{\Wtwom\cap\Vtwon^\perp}(a^\top\bS(c)^{-1}f)$. Hence,
  \begin{equation*}
    \dot{V} = \bS(c)^{-1}\left(f(\gamma) +  \alpha(\lambda,a)^{-1}(a+(\bJ_{2n}a)\Jcal)\proj{\Wtwom\cap\Vtwon^\perp}(\dot{u}^\dagger-a^\top \bS(c)^{-1}f(\gamma))\right),
  \end{equation*}
  and we obtain the desired expression \eqref{eq:dynamics-dot-V-2} for $\dot{V}$.

  Finally, we observe that, for any $u\in\fH$, $\inner{\proj{\Wtwom\cap\Vtwon^\perp}u, \proj{\Wtwom}v_j}_{\fH} = 0$ for all $1\leq j \leq 2n$ since $v_j\in\Vtwon$, and that
  \begin{align*}
    \inner{\Jcal\proj{\Wtwom\cap\Vtwon^\perp}u, \proj{\Wtwom}v_j}_{\fH} & = -\inner{\proj{\Wtwom\cap\Vtwon^\perp}u, \cJ\proj{\Wtwom}v_j}_{\fH} = - \inner{\proj{\Wtwom\cap\Vtwon^\perp}u, \proj{\Wtwom}\cJ v_j}_{\fH} \\&= -\sum_{k=1}^{2n}(\bJ_{2n})_{k,j}\inner{\proj{\Wtwom\cap\Vtwon^\perp}u, \proj{\Wtwom} v_k}_{\fH} = 0, \quad \forall 1\leq j\leq 2n,
  \end{align*}
  where we used \Cref{lem:Jcommutes} and orthosymplecticity of $V$.
  Using the above identities and inserting the expression for $\dot{V}$ into \eqref{eq:adot-1} yields
  \begin{equation*}
    \begin{aligned}
      \dot{a}_i & = \sum_{j=1}^{2n}(\Gbb(V,\proj{\Wtwom} V)^{-1})_{i,j}\inner*{\dot{u}^\dagger-\sum_{k=1}^{2n}a_k(\bS(c)^{-1}f)_k, \proj{\Wtwom} v_j}_{\fH},
    \end{aligned}
  \end{equation*}
  which gives \eqref{eq:dynamics-dot-a-2}.
\end{proof}

\section{Spatial and temporal discretization}\label{app:discrete}
In this Section we derive a discrete formulation of \eqref{eq:dynamics-dot-a-2}-\eqref{eq:dynamics-dot-c-2}-\eqref{eq:dynamics-dot-V-2} that is suitable for numerical simulations. Let us consider the basis functions $v_i(t)=(v_i^q(t),v_i^p(t))\in\fH=\Hhat\times\Hhat$, for all $1\leq i\leq2n$, and a generic spatial discretization scheme in $\Hhat$. Let $N$ be the number of degrees of freedom. We collect the degrees of freedom associated with $v_i^q(t)$ and $v_i^p(t)$ for all $i$ at the generic time $t$ in the matrices $\widetilde{\bU}^q(t)\in\bR^{N\times2n}$ and $\widetilde{\bU}^p(t)\in\bR^{N\times2n}$, respectively.
At the discrete level, orthosymplecticity of $V=(v_1,\dots,v_{2n})$ implies that $\widetilde{\bU}^\top\bM\widetilde{\bU}=\bI_{2n} \text{ and } \widetilde{\bU}^\top\bM \bJ_{2N}\widetilde{\bU} = \bJ_{2n}$, where
$$\widetilde{\bU}(t)=\begin{bmatrix}
    \widetilde{\bU}^q(t) \\ \widetilde{\bU}^p(t)
  \end{bmatrix}\in\bR^{2N\times2n}\qquad \bM=\begin{bmatrix}
    \widehat{\bM} & 0 \\ 0 & \widehat{\bM}
  \end{bmatrix}\in\bR^{2N\times2N}$$
and $\widehat{\bM}\in\bR^{N\times N}$ is the mass matrix associated with the discretization in $\Hhat$. This means that the matrix $\bU:=\bM^{1/2}\widetilde{\bU}$ is orthosymplectic, that is, it belongs to the manifold
\begin{equation*}
  \cM(2n,\bR^{2N}) = \left\{\bU\in\bR^{2N\times2n} : \bU^\top\bU=\bI_{2n} \text{ and } \bU^\top \bJ_{2N}\bU = \bJ_{2n}\right\}.
\end{equation*}

Similarly, we discretize the parameter set $\fTheta$ by a set of $p$ sample parameters $\theta_j$, $1\leq j\leq p$, and we introduce the matrix $\bZ(t)\in\bR^{2n\times p}$ that collects the evaluations of the coefficients $c_i$ at the sample parameters at time $t$: $\bZ(t)_{i,j} = c_i(t)(\theta_j)$. We remark that the condition on the coefficients $c(t)(\theta)$ belonging to the space $\cC_{2n}$ defined in \eqref{eq:space-C2n} translates to
\begin{equation}\label{eq:full-rank-cond-Z}
  \text{rank}(\bZ(t)\bZ(t)^\top + \bJ_{2n}\bZ(t)\bZ(t)^\top\bJ_{2n}^\top) = 2n \qquad \forall t.
\end{equation}
A necessary condition for this full-rank property to be satisfied is that $p\geq2n$. In other words, if the basis is too large compared to the number of sample parameters, one may incur a rank-deficient evolution problem for the matrix $\bZ$. In this work, we assume that the full-rank condition \eqref{eq:full-rank-cond-Z} always holds, and we refer to, e.g. \cite[Section 5]{HPR22}, for a discussion on how to address rank-deficiencies in the dynamics.

We collect the degrees of freedom associated with the Riesz representers of the measurement functionals at time $t$ in a matrix $\widetilde{\bW}(t)\in\bR^{2N\times2m}$.
In the numerical experiments of \Cref{sec:num-exp} we assume that the expression of the Riesz representers spanning the observation space is known, and therefore we can evaluate $\bW(t):=\bM^{1/2}\widetilde{\bW}(t)$ from the locations of the sensors $\overline{x}(t)$ at time $t$. In a more general setting, one would have to numerically compute the Riesz representers by solving a suitable boundary value problem.

Thus, \eqref{eq:dynamics-dot-a-2}-\eqref{eq:dynamics-dot-c-2}-\eqref{eq:dynamics-dot-V-2} yields a system of ODEs for the unknowns $(a(t), \bZ(t), \bU(t))\in\mathbb{R}^{2n}\times\mathbb{R}^{2n\times p}\times\Mcal(2n,\mathbb{R}^{2N})$ of the form
\begin{equation}\label{eq:ODE_discr}
  \begin{cases}
     & \dot{a}(t) = f_1(a(t), \bZ(t), \bU(t); \bW(t), \dot{\rz}(t))   \\
     & \dot{\bZ}(t) = f_2(\bZ(t), \bU(t))                             \\
     & \dot{\bU}(t) = f_3(a(t), \bZ(t), \bU(t); \bW(t), \dot{\rz}(t))
  \end{cases}
\end{equation}

Concerning time discretization of \eqref{eq:ODE_discr}, we subdivide the time interval $(0,T]$ into $N_t$ sub-intervals $(t_j,t_{j+1})$, with $0\leq j\leq N_t-1$, where for simplicity we consider uniform time steps $\Delta t = t_{j+1}-t_j=T/N_t$, and we set out to produce approximations $a_j$, $\bZ_j$ and $\bU_j$ to $a(t_j)$, $\bZ(t_j)$ and $\bU(t_j)$. We assume that the measurements $\dot{\rz}$ can only be acquired at the time instants $t_j$, and that the sensors are fixed in the time sub-interval $(t_j, t_{j+1})$. In other words, we set $\dot{\rz}(t)=\dot{\rz}(t_j)=:\dot{\rz}_j$ and $\bW(t) = \bW(\overline{x}(t)) = \bW(\overline{x}(t_j)) =:\bW_j$ for $t\in(t_j,t_{j+1})$. Therefore, solving \eqref{eq:ODE_discr} numerically in $(t_j,t_{j+1})$ produces approximations $a_{j+1}$, $\bZ_{j+1}$ and $\bU_{j+1}$ at time $t_{j+1}$ according to the generic procedure
\begin{equation*}
  (a_{j+1}, \bZ_{j+1}, \bU_{j+1})\gets\textsc{Evolve}(a_j, \bZ_j, \bU_j; \bW_j, \dot{\rz}_j).
\end{equation*}
In this work, we consider a tangent method based on retraction maps as proposed in \cite{P21}, and we apply a second-order partitioned Runge-Kutta scheme to the resulting formulation. In particular, we combine the implicit midpoint scheme for $\bZ$ with the explicit midpoint method (or modified Euler method) for the evolution of $a$ and for the dynamics on the tangent space to $\mathcal{V}_{2n}$ corresponding to the evolution of $\bU$. This approach ensures that the matrices $\bU_j$ are orthosymplectic at all time steps $t_j$. Moreover, since the implicit midpoint scheme is a symplectic integrator, the Hamiltonian structure of the evolution equation for the coefficients $\bZ$ is preserved at the time-discrete level. We refer to \cite[Section 5.3]{P21} for more details on this procedure.

It remains to prescribe an initial condition $(a_0, \bZ_0, \bU_0)$ at time $t=0$. This can be done by discretizing \eqref{eq:V0}, \eqref{eq:a0} and \eqref{eq:c0}, respectively. At the discrete level, \eqref{eq:V0} reads
\begin{equation}\label{eq:minprob_U0}
  \bU_0 = \argmin_{\bU\in\cM(2n,\bR^{2N})}\norm{\bY_0-\bU\bU^\top\bY_0}_F,
\end{equation}
where $\bY_0=\bM^{1/2}\begin{bmatrix}
    \widetilde{\bY}_0^q \\ \widetilde{\bY}_0^p
  \end{bmatrix}\in\bR^{2N\times p}$ and $\widetilde{\bY}_0^q$ and $\widetilde{\bY}_0^p$ collect the degrees of freedom associated with the initial conditions $u^q(\theta_j)(0)$ and $u^p(\theta_j)(0)$, respectively, for $1\leq j \leq p$. The minimization problem \eqref{eq:minprob_U0} can be solved by means of a complex SVD of $\bY_0$ as detailed in \cite[Section 4.2]{PM16}. Then, the matrix of coefficients at the initial time can then be computed from \eqref{eq:c0} as $\bZ_0=\bU_0^\top \bY_0$. Finally, discretization of \eqref{eq:a0} and \eqref{eq:r0} yields
\begin{equation}\label{eq:minprob_a0}
  \bU_0^\top\bW_0(\bW_0^\top\bW_0)^{-1}\bW_0^\top\bU_0a_0 = \bU_0^\top\bW_0(\bW_0^\top\bW_0)^{-1}\rz_0,
\end{equation}
where $\bW_0:=\bW(0)=\bW(\overline{x}(0))$ and $\rz_0\in\bR^{2m}$ is the vector of measurements at the initial time. Solving \eqref{eq:minprob_a0} yields the initial condition $a_0$.

We can now summarize the entire procedure in the following algorithm.
\begin{algorithm}[H]\label{alg:symp_filt}
  \caption{Symplectic filtering with moving sensors}
  \begin{algorithmic}[1]
    \Procedure{ \textsc{Symplectic\_filtering}}{$\bY_0$, $\overline{x}_0$}
    \State Construct $\bU_0$ via complex SVD of $\bY_0$, and construct $\bZ_0=\bU_0^\top\bY_0$
    \State $\overline{x}_0 \gets \textsc{Sensors update}(\overline{x}_0,  \bU_0)$ as in \Cref{alg:sensors_update}
    \State Define $\bW_0=\bW(\overline{x}_0)$ and obtain measurements $\dot{\rz}_0$
    \State Construct $a_0$ by solving \eqref{eq:minprob_a0}
    \For{$j=0,\dots,N_t-1$}
    \State $(a_{j+1}, \bZ_{j+1}, \bU_{j+1})\gets\textsc{Evolve}(a_j, \bZ_j, \bU_j; \bW_j, \dot{\rz}_j)$
    \State $\overline{x}_{j+1} \gets \textsc{Sensors update}(\overline{x}_j,  \bU_{j+1})$ as in \Cref{alg:sensors_update}
    \State Define $\bW_{j+1}=\bW(\overline{x}_{j+1})$ and obtain measurements $\dot{\rz}_{j+1}$
    \EndFor
    \EndProcedure
  \end{algorithmic}
\end{algorithm}

\end{document}